\theoremstyle{definition}
\newtheorem{theorem}{Theorem}[section]
\newtheorem{prop}[theorem]{Proposition}
\newtheorem{conjecture}[theorem]{Conjecture}
\newtheorem{lemma}[theorem]{Lemma}
\newtheorem{definition}[theorem]{Definition}
\newtheorem{corollary}[theorem]{Corollary}
\newtheorem{remark}[theorem]{Remark}
\newtheorem{example}[theorem]{Example}
\newtheorem{question}[theorem]{Question}
\newtheorem{setup}[theorem]{Setup}
\numberwithin{equation}{section}
\newcommand{\conv}[1]{\mathrm{conv}\left\{#1\right\}}
\newcommand{\R}{\mathbb{R}}
\newcommand{\Z}{\mathbb{Z}}
\newcommand{\Q}{\mathbb{Q}}
\newcommand{\Pq}{\Delta_{(1,\bq)}}
\newcommand{\Pp}{\Delta_{(1,\bp)}}
\newcommand{\lcm}[1]{\mathrm{lcm}\left(#1\right)}
\newcommand{\res}[1]{\left\langle #1 \right\rangle}
\def\ds{\displaystyle}
\newcommand\commentout[1]{}
\newcommand{\bm}[1]{{\boldsymbol{#1}}}
\def\0{\bm{0}}
\def\1{\bm{1}}
\def\bc{\bm{c}}
\def\be{\bm{e}}
\def\bi{\bm{i}}
\def\bj{\bm{j}}
\def\bp{\bm{p}}
\def\bq{\bm{q}}
\def\br{\bm{r}}
\def\bs{\bm{s}}
\def\bx{\bm{x}}
\def\by{\bm{y}}
\def\brho{\bm{\rho}}
\begin{document}



\title{$h^*$-Polynomials With Roots on the Unit Circle}

\author{Benjamin Braun}
\address{Department of Mathematics\\
  University of Kentucky\\
  Lexington, KY 40506--0027}
\email{benjamin.braun@uky.edu}

\author{Fu Liu}
\address{Department of Mathematics, University of California, One Shields Avenue, Davis, CA 95616}
\email{fuliu@math.ucdavis.edu}

\subjclass[2010]{Primary: 52B20, 05A15, 26C10}


\date{19 July 2018}

\thanks{
  The first author was partially supported by grant H98230-16-1-0045 from the U.S. National Security Agency.
  The second author was partially supported by a grant from the Simons Foundation \#426756.
  This material is also based in part upon work supported by the National Science Foundation under Grant No. DMS-1440140 while both authors were in residence at the Mathematical Sciences Research Institute in Berkeley, California, during the Fall 2017 semester. 
}

\begin{abstract}
For an $n$-dimensional lattice simplex $\Pq$ with vertices given by the standard basis vectors and $-\bq$ where $\bq$ has positive entries, we investigate when the Ehrhart $h^*$-polynomial for $\Pq$ factors as a product of geometric series in powers of $z$.
Our motivation is a theorem of Rodriguez-Villegas implying that when the $h^*$-polynomial of a lattice polytope $P$ has all roots on the unit circle, then the Ehrhart polynomial of $P$ has positive coefficients.
We focus on those $\Pq$ for which $\bq$ has only two or three distinct entries, providing both theoretical results and conjectures/questions motivated by experimental evidence.
\end{abstract}

\maketitle

\tableofcontents


\section{Introduction}

\subsection{Background and Motivation}
Assume for this paper that $P$ is a full-dimensional lattice polytope in $\R^n$, i.e. $P$ is given by the convex hull of a finite subset of $\Z^n$ and the affine hull of $P$ has dimension $n$.
Letting $tP$ denote the dilation of $P$ by $t$, the \emph{Ehrhart polynomial} $L_P(t)$ is defined to be the degree $n$ polynomial satisfying
\[
  L_P(t):=|tP\cap \Z^n|
\]
for $t\in \Z_{\geq 1}$, which is known to exist due to work of Ehrhart~\cite{Ehrhart}.
Much is known about the roots and coefficients of Ehrhart polynomials, but major open questions remain.
One area of active investigation~\cite{LiuPositivitySurvey} is to identify criteria that imply $L_P(t)\in \Q_{> 0}[t]$, in which case we say that $P$ is \emph{Ehrhart positive}.

Given a polynomial $f(t)\in \R [t]$ of degree $n$, if all the roots of $f(t)$ have negative real parts, then expanding $f(t)$ as a product of terms of the form $(t+r)$ and $(t+r+bi)(t+r-bi)$ implies that $f(t)\in \R_{> 0}[t]$.
Thus, Ehrhart positivity is a consequence when $L_P(t)$ has roots with only negative real parts.
One approach to investigating those $P$ such that $L_P(t)$ has roots with only non-negative real parts is to consider the generating function for $L_P(t)$.
For any polynomial $f(t)\in \R [t]$ of degree $n$, there exist values $h^*_j\in \R$ with $\sum_{j=0}^nh_j^*\neq 0$ such that
\[
  \sum_{t=0}^\infty f(t)z^t=\frac{\sum_{j=0}^nh_j^*z^j}{(1-z)^{n+1}} \, .
\]
When $f(t)=L_P(t)$, it is known due to work of Stanley~\cite{StanleyDecompositions} that $h_j^*\in \Z_{\geq 0}$ for all $j$, and we refer to the polynomial $h^*(P;z):=\sum_{j=0}^nh_j^*z^j$ as the \emph{$h^*$-polynomial of $P$}.
Further, $h_0^*=1$ and $h_n^*=|\mathrm{int}(P)\cap \Z^n|$ where $\mathrm{int}(P)$ denotes the topological interior of $P$.
Our connection to Ehrhart positivity is provided by the following theorem, which is a special case of a more general result proved by Rodriguez-Villegas.

\begin{theorem}[Rodriguez-Villegas \cite{Rodriguez-Villegas}]
  \label{thm:geomriemhyp}
  If $f(t)\in \R[t]$ is of degree $n$ and the associated polynomial $\sum_{j=0}^nh_j^*z^j$ is also of degree $n$ with all roots on the unit circle, then the roots of $f(t)$ all have real part equal to $-1/2$.
\end{theorem}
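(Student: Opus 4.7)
\noindent\textit{Proof sketch.} My plan is a two-stage symmetry reduction followed by a root-location argument, organized into three steps.

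\textbf{Step 1 (Palindromicity).} First I would show that the hypothesis forces $h^*(z) = \epsilon\, z^n h^*(1/z)$ for some $\epsilon \in \{+1,-1\}$. Since $h^*$ has real coefficients, its roots are invariant under complex conjugation. Since those roots lie on the unit circle, conjugation coincides with the involution $\alpha \mapsto 1/\alpha$. Hence the multiset of roots of $h^*$ is invariant under $z \mapsto 1/z$, so the degree-$n$ polynomials $h^*(z)$ and $z^n h^*(1/z)$ have identical root multisets, and are therefore proportional. Comparing the leading coefficient to the constant term then forces the proportionality constant to be $\pm 1$; equivalently, $h_j^* = \epsilon\, h_{n-j}^*$.

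\textbf{Step 2 (Functional equation for $f$).} Starting from $f(t) = \sum_{j=0}^n h_j^* \binom{t+n-j}{n}$ and the combinatorial reciprocity $\binom{-1-t+n-j}{n} = (-1)^n \binom{t+j}{n}$, together with the palindromicity from Step 1, a direct manipulation gives $f(-1-t) = \epsilon (-1)^n f(t)$. Setting $g(s) := f(-1/2 + s)$, this becomes $g(-s) = \epsilon (-1)^n g(s)$, so $g$ is either even or odd in $s$. Writing $g(s) = G(s^2)$ or $g(s) = s\, G(s^2)$ for some polynomial $G \in \R[w]$, the conclusion of the theorem is equivalent to showing that $G$ has only real non-positive roots.

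\textbf{Step 3 (Root location for $G$).} This is the main step. My plan is to exploit the Chebyshev-type factorization of palindromic polynomials: for $n$ even, write $h^*(z) = z^{n/2} q(z + z^{-1})$ with $q \in \R[v]$ of degree $n/2$; the case $n$ odd is analogous after first extracting a factor of $z\pm 1$. The hypothesis that all roots of $h^*$ lie on the unit circle translates, via $z + z^{-1} = 2\cos\theta$ for $z = e^{i\theta}$, into $q$ having all roots in $[-2,2]$. The polynomial $G(w)$ is then determined from $q(v)$ by an explicit linear transformation arising from the $h^* \mapsto f$ map after the substitutions $t = -1/2+s$ and $w = s^2$. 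I would compute this transformation and show that it sends polynomials with roots in $[-2,2]$ to polynomials with roots in $(-\infty,0]$.

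The main obstacle is Step 3. While Steps 1 and 2 are routine algebraic manipulations, certifying the root-location behavior of $G$ requires a delicate analysis of a specific linear operator between polynomial spaces. One viable strategy is to verify the claim on a ``primitive'' family of palindromic factors, such as $h^*(z) = (z^2 - 2\cos\theta\, z + 1)^k$ or $h^*(z) = (z+1)^k$, and then combine using continuity or Polya-Schur-Borcea-Br\"and\'en type results on linear operators preserving real-rootedness. An alternative is a direct analytic argument (e.g., via residues or a trigonometric substitution $z = e^{i\theta}$) that expresses $g(s)$ as an explicit real integral transform of $q$, from which the sign structure of $G$'s roots can be read off.
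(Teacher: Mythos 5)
The paper does not prove this theorem; it is stated as an attribution to Rodriguez-Villegas~\cite{Rodriguez-Villegas} with no argument supplied. So there is no ``paper proof'' to compare against, and I'm assessing your proposal on its own terms.

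Your Steps 1 and 2 are correct and cleanly executed. For Step 1: since $h^*$ has real coefficients, its root multiset is closed under conjugation, and on the unit circle conjugation agrees with $\alpha \mapsto 1/\alpha$, so $z^n h^*(1/z)$ and $h^*(z)$ share a root multiset and must be proportional; comparing the $z^0$ and $z^n$ coefficients gives $\epsilon^2 = 1$. In fact one can go further: the leading coefficient of $f$ equals $h^*(1)/n!$, so the hypothesis $\deg f = n$ forces $h^*(1) \neq 0$, which rules out $\epsilon = -1$. Step 2 is also right: starting from $f(t)=\sum_{j}h_j^*\binom{t+n-j}{n}$, the identity $\binom{-1-t+n-j}{n}=(-1)^n\binom{t+j}{n}$ together with palindromicity gives $f(-1-t)=\epsilon(-1)^n f(t)$, and hence the parity of $g(s)=f(-1/2+s)$.

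However, everything you have established so far is purely a symmetry reduction: it shows that the roots of $f$ come in pairs symmetric about the line $\Re(t)=-1/2$, not that they lie on it. The actual content of Rodriguez-Villegas's theorem is concentrated entirely in what you call Step 3, and you have not carried it out. You correctly observe that palindromicity permits writing $h^*(z)=z^{n/2}q(z+z^{-1})$ (for $n$ even; with a forced factor of $z+1$ for $n$ odd, since $h^*(-1)=0$ for palindromic odd-degree polynomials) and that the unit-circle condition on $h^*$ translates to $q$ having roots in $[-2,2]$. But the crucial assertion — that the resulting linear operator $q \mapsto G$ takes $[-2,2]$-rooted polynomials to $(-\infty,0]$-rooted ones — is not verified; it is asserted as what you ``would compute and show.'' Note that this operator is linear in the coefficients of $q$, not a substitution, so it does not respect multiplication: checking it on ``primitive'' factors of $q$ (or of $h^*$) does not extend to products. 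The Borcea-Br\"and\'en framework you mention is a reasonable tool for this kind of claim, but invoking it requires explicitly identifying the operator's symbol and verifying the relevant stability criterion, none of which is done. As written, the proposal correctly reduces the problem to an equivalent statement in transformed coordinates, but does not prove that statement; the theorem remains open in your write-up.
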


As a consequence of Ehrhart-MacDonald Reciprocity, those lattice polytopes $P$ whose Ehrhart polynomials have roots with real parts equal to $-1/2$ form a subfamily of the class of reflexive polytopes, where $P$ is \emph{reflexive} if some translate $P'$ of $P$ by an integer vector contains the origin in its interior and satisfies that the polar dual of $P'$ is also a lattice polytope.
By a result due to Hibi~\cite{HibiDualPolytopes}, it is known that $P$ is reflexive if and only if $h_i^*=h_{n-i}^*$ for all $i$.
Since $h^*_0=1$ for all lattice polytopes, it follows that reflexive $P$ have $h_n^*=1$.
Lattice polytopes satisfying $h_n^*=|\mathrm{int}(P)\cap \Z^n|=1$ are called \emph{canonical Fano} polytopes, and thus reflexive polytopes are contained within this broader class.

To summarize, if one can apply Theorem~\ref{thm:geomriemhyp} to $L_P(t$), then we must have that $h^*(P;z)$ is monic of degree $n$ with all of its roots on the unit circle.
The $h^*$-polynomials with these properties fall within a large and well-studied family.

\begin{definition}
  A \emph{Kronecker polynomial} is a monic integer polynomial with all roots inside the complex unit disk.
\end{definition}

It is known as a consequence of results due to Hensley~\cite{HensleyInterior} and Lagarias and Ziegler~\cite{lagariasziegler} that for each dimension $n$, there are only a finite number of canonical Fano polytopes (up to unimodular equivalence).
The following classical theorem complements this fact.

\begin{theorem}[Kronecker \cite{KroneckerRoots}, Damianou \cite{DamianouUnitCircle}]
  \label{thm:Kronecker}
  For each fixed $n$, there are only finitely many Kronecker polynomials of degree $n$.
  Further, if $h(z)\in \Z[z]$ is a Kronecker polynomial, then all the roots of $h(z)$ are roots of unity, and $h(z)$ factors as a product of cyclotomic polynomials.
\end{theorem}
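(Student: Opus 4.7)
The plan is to prove this in three stages: first, finiteness of Kronecker polynomials of a given degree via a simple coefficient bound; second, the root-of-unity conclusion via a classical trick of Kronecker that leverages the finiteness just established; and third, deduce the cyclotomic factorization from standard facts about minimal polynomials.

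For finiteness, write $h(z)=\prod_{i=1}^n(z-\alpha_i)$ with $|\alpha_i|\le 1$. Up to sign the coefficients are the elementary symmetric functions $e_k(\alpha_1,\ldots,\alpha_n)$, and the triangle inequality immediately gives $|e_k(\alpha_1,\ldots,\alpha_n)|\le\binom{n}{k}$. Since the coefficients are required to be integers, each is limited to finitely many values, and so there are only finitely many Kronecker polynomials of degree $n$.

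To show every root is a root of unity, I would fix a nonzero root $\alpha$ of $h(z)$ and let $f(z)=\prod_{i=1}^d(z-\alpha_i)\in\Z[z]$ be its minimal polynomial. The Galois conjugates $\alpha_i$ are themselves roots of $h$, so $|\alpha_i|\le 1$ for all $i$. For each integer $k\ge 1$, define $f_k(z):=\prod_{i=1}^d(z-\alpha_i^k)$. Its coefficients are symmetric functions of the $\alpha_i$, hence Galois-invariant and so rational, and also algebraic integers, hence integers. Therefore every $f_k$ is itself a monic integer polynomial of degree $d$ whose roots lie in the closed unit disk, i.e., a Kronecker polynomial. By the finiteness result, the family $\{f_k\}_{k\ge 1}$ must contain a repetition: $f_k=f_\ell$ for some $k>\ell\ge 1$. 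This yields a permutation $\sigma$ of $\{1,\ldots,d\}$ with $\alpha_i^k=\alpha_{\sigma(i)}^\ell$ for all $i$. Iterating and choosing $m$ so that $\sigma^m=\id$ gives $\alpha_i^{k^m}=\alpha_i^{\ell^m}$; since $\alpha_i\ne 0$, it follows that $\alpha_i^{k^m-\ell^m}=1$, so each $\alpha_i$ is a root of unity.

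Finally, by Gauss's lemma $h(z)$ factors over $\Z$ into monic irreducibles, each of which is the minimal polynomial over $\Q$ of one of its roots; since the minimal polynomial of a primitive $N$-th root of unity is the cyclotomic polynomial $\Phi_N$, the factorization is as claimed. The main obstacle is recognizing the $f_k$ construction: one needs the observation that simultaneously raising all roots of an integer polynomial to the $k$-th power preserves integrality of the coefficients, after which the pigeonhole step is forced by the finiteness result already in hand. A minor technical point is handling the root $0$ (ruled out in the conclusion), which is easily addressed by first factoring off the appropriate power of $z$ from $h(z)$ before applying the trick to the remaining factor.
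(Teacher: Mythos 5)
The paper does not give its own proof of Theorem~\ref{thm:Kronecker}; it is cited to Kronecker and Damianou. Your argument is the standard one (indeed it is essentially Kronecker's original proof as it appears in Damianou's note), and it is correct. The finiteness step via $|e_k(\alpha_1,\ldots,\alpha_n)|\le\binom{n}{k}$ is exactly right, and the pigeonhole trick with the family $f_k(z)=\prod_i(z-\alpha_i^k)$ — with the key observation that each $f_k$ is again a monic integer polynomial (symmetric in the Galois conjugates, hence rational; algebraic integers, hence in $\Z$) with roots in the closed disk — is the heart of the proof. The iteration $\alpha_i^{k^m}=\alpha_{\sigma^m(i)}^{\ell^m}$ and choosing $m$ with $\sigma^m=\mathrm{id}$ is carried out correctly, and $k^m-\ell^m>0$ since $k>\ell\ge1$, so each conjugate is a root of unity.

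One remark on the statement rather than your proof: as literally written the theorem is slightly loose, since $h(z)=z^j g(z)$ with $j>0$ is Kronecker in the sense of Definition~1.2 but has $0$ as a root, which is not a root of unity, and $z$ is not cyclotomic. You spotted this and noted that one factors off the power of $z$ first; that is the right fix (and in the paper's application $h^*(P;0)=h_0^*=1$, so the issue never arises). You might also note explicitly that after factoring off $z^j$, Gauss's lemma is what lets you conclude the irreducible factors over $\Q$ are actually monic in $\Z[z]$, so that each is $\Phi_N$ for some $N$; you invoke Gauss's lemma but it deserves emphasis since integrality of the factorization is precisely what makes the word ``cyclotomic'' (rather than ``divides some $z^N-1$'') correct.
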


Combining Theorem~\ref{thm:geomriemhyp} and Theorem~\ref{thm:Kronecker} in the setting of Ehrhart $h^*$-polynomials, we obtain the following corollary.

\begin{corollary}[see Corollary 2.2.4 in \cite{LiuPositivitySurvey}] \label{cor:reflexive}
  If the $h^*$-polynomial of a canonical Fano polytope is a Kronecker polynomial, then $P$ is reflexive and $L_P(t)$ is Ehrhart positive.
\end{corollary}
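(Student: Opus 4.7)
The plan is to chain the two cited theorems with the reflexivity and Ehrhart-positivity observations already recorded in the excerpt.

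First, since $P$ is canonical Fano, $h_n^* = |\mathrm{int}(P)\cap \Z^n| = 1$, so $h^*(P;z)$ is a monic integer polynomial of exact degree $n$. Because $h^*(P;z)$ is assumed Kronecker, Theorem~\ref{thm:Kronecker} guarantees that each of its roots is a root of unity and in particular lies on the unit circle. Theorem~\ref{thm:geomriemhyp} applied to $f(t) = L_P(t)$, whose associated polynomial $h^*(P;z)$ has degree exactly $n$, then yields that every root of $L_P(t)$ has real part equal to $-1/2$.

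For Ehrhart positivity, I would invoke the observation made at the start of the excerpt: since $-1/2 < 0$, writing $L_P(t)$ as $\mathrm{vol}(P)$ times a product of real linear factors $(t+1/2)$ and real quadratic factors $(t+1/2)^2 + b_j^2$ (obtained by pairing complex-conjugate roots) expresses $L_P(t)$ as a product of polynomials with positive coefficients. Hence $L_P(t) \in \Q_{>0}[t]$.

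For reflexivity, I would invoke the remark recorded after Theorem~\ref{thm:geomriemhyp}: any lattice polytope whose Ehrhart polynomial has every root of real part $-1/2$ is reflexive, via Ehrhart-MacDonald reciprocity. Concretely, one reads off the factorization above that $L_P(-1-t) = (-1)^n L_P(t)$: each quadratic factor is invariant under $t \mapsto -1-t$, and each linear factor contributes a sign, giving $(-1)^k$ where $k$ is the number of real roots, and $k \equiv n \pmod{2}$. By a standard generating-function form of Ehrhart-MacDonald reciprocity, this functional equation is equivalent to the palindromic relation $h_i^* = h_{n-i}^*$ for all $i$. Combined with the fact that $P$ has an interior lattice point ($h_n^* = 1$), Hibi's theorem then gives that $P$ is reflexive.

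The main obstacle here is minimal: the entire argument is essentially bookkeeping, stringing together the two cited theorems with the standing observations of the introduction. The only step requiring a brief verification is the passage from ``roots of $L_P$ have real part $-1/2$'' to ``$h^*$ is palindromic,'' which is a standard consequence of Ehrhart-MacDonald reciprocity.
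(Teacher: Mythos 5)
Your proof is correct and follows exactly the chain of reasoning the paper lays out in its introduction (the paper itself cites the result to \cite{LiuPositivitySurvey} rather than proving it): canonical Fano gives monic degree-$n$ $h^*$, Kronecker gives unit-circle roots, Theorem~\ref{thm:geomriemhyp} gives roots of $L_P$ at real part $-1/2$, the factorization into $(t+1/2)$ and $(t+1/2)^2+b^2$ gives positivity, and the functional equation $L_P(-1-t)=(-1)^nL_P(t)$ with Ehrhart--Macdonald reciprocity and Hibi's theorem gives reflexivity. This is the intended argument; no gaps.
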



\subsection{Our Contributions}

One way for an $h^*$-polynomial to be Kronecker is to factor as a product of geometric series in powers of $z$, which we refer to as a \emph{geometric factorization}.
Motivated by Corollary~\ref{cor:reflexive}, we explore geometric factorizations for lattice simplices of the following form: let $\Pq$ be the simplex with vertices given by the standard basis vectors and $-\bq$ where $\bq$ has positive entries.
These simplices are related to fans defining weighted projective spaces, and their Ehrhart-theoretic properties have recently been studied by Payne~\cite{Payne}, Braun, Davis, and Solus~\cite{BraunDavisSolusIDP}, Solus~\cite{SolusNumeralSystems}, and Balletti, Hibi, Meyer, and Tsuchiya~\cite{laplaciandigraphs}.

In Section~\ref{sec:pq}, we establish basic facts about the $h^*$-polynomials of these simplices and review some of their properties related to $h^*(\Pq;z)$ being Kronecker.
In Section~\ref{sec:factoring}, we prove that when $\Pq$ is reflexive there is always a geometric series that can be factored from  $h^*(\Pq;z)$, leading us to define a polynomial $g_{\br}^{\bx}(z)$ that is our primary object of study.

Sections~\ref{sec:twointegers} and~\ref{sec:twoodd} contain our main theoretical results, focused on $\bq$-vectors with two distinct entries $a$ and $ka-1$.
In Section~\ref{sec:twointegers}, we identify four families of $\bq$-vectors for which $h^*(\Pq;z)$ factors as a product of geometric series.
In Section~\ref{sec:twoodd}, we prove that when $\bq$ has distinct entries $2$ and $2k-1$, these families essentially classify those simplices with Kronecker $h^*$-polynomials.

In Section~\ref{sec:questions}, we provide various conjectures and questions informed by experiments using SageMath~\cite{sagemath}.
These include conjectured extensions of our result in Section~\ref{sec:twoodd}, a conjectured Kronecker family related to Fibonacci numbers, and an exploration of the case where $\bq$ has three distinct entries, among other topics.


\section{The Simplices $\Pq$}\label{sec:pq}

\subsection{Definition and Reflexivity}
Given a vector of positive integers $\bq\in \Z_{>0}^n$, we define
\[
  \Pq := \conv{\be_1,\ldots,\be_n,-\sum_{i=1}^n q_i\be_i}
\]
where $\be_i$ denotes the $i$-th standard basis vector in $\R^n$. 
There is a natural stratification of the family of simplices of the form $\Pq$ based on the distinct entries in the vector $\bq$.
Given a vector of distinct positive integers $\br = (r_1,\ldots,r_d)$, write
\[
  (r_1^{x_1},r_2^{x_2},\ldots,r_d^{x_d}):=(\underbrace{r_1,r_1,\ldots,r_1}_{x_1\text{ times}},\underbrace{r_2,r_2,\ldots,r_2}_{x_2\text{ times}},\ldots,\underbrace{r_d,r_d,\ldots,r_d}_{x_d\text{ times}}) \, .
\]

\begin{definition}
  We say that both $\bq$ and $\Pq$ are \emph{supported} by the vector $\br = (r_1,\ldots,r_d)$ with \emph{multiplicity} $\bx=(x_1, \dots, x_d)$ if $\bq=(q_1,\ldots,q_n)=(r_1^{x_1},r_2^{x_2},\ldots,r_d^{x_d})$.
\end{definition}

Since our goal is to determine when $h^*(\Pq;z)$ is a Kronecker polynomial, Corollary~\ref{cor:reflexive} implies that we are only interested in the case where $\Delta_{(1,\bq)}$ is reflexive. 
It is straightforward to show~\cite{conrads} that $\Delta_{(1, \bq)}$ is reflexive if and only if
\begin{equation}
  q_i \text{ divides } 1 + \sum_{j =1}^n q_j, \quad \text{ for all $1 \le i \le n$ } \, .
  \label{equ:conrads}
\end{equation}
Equivalently, if $\bq$ is supported by $\br$ with multiplicity $\bx$, then $\Pq$ is reflexive if and only if if $\lcm{r_1,\dots, r_d}$ divides $1+\sum_{i=1}^d x_i r_i$, which leads us to the following definition.

\begin{definition}
  Say $\bx$ is an \emph{R-multiplicity} of $\br$ if $\lcm{r_1,\dots, r_d}$ divides $1+\sum_{i=1}^d x_i r_i$.
\end{definition} 

Throughout the rest of this paper, we will frequently use the following setup.
\begin{setup}
  \label{setup1}
  Let $\bq$ be supported by the vector $\br=(r_1, \dots, r_d) \in \left( \Z_{>0} \right)^d$ with an R-multiplicity $\bx=(x_1, \dots, x_d) \in  \left( \Z_{>0} \right)^d$.
  Let $\ell = \ell(\bq)$ be the integer defined by
  \begin{equation}
    1+\sum_{i=1}^d x_ir_i=\ell \cdot \lcm{r_1,r_2,\dots,r_d} \, .
    \label{equ:elldefn}
  \end{equation}
  Finally, we define 
  \begin{equation}
    \bs := (s_1, \dots, s_d), \quad \text{where $s_i := \lcm{r_1, \dots, r_d}/ r_i$ for each $1 \le i \le d$}.
    \label{equ:sdefn}
  \end{equation}
\end{setup}

\begin{lemma}
  \label{lem:gcdlcm}
  Using Setup~\ref{setup1}, we have that $\gcd(r_1, \dots, r_d) = 1$ and thus
  \begin{equation}
    \lcm{s_1, \dots, s_d}= \lcm{r_1, \dots, r_d}.
    \label{equ:lcmeq}
  \end{equation}
\end{lemma}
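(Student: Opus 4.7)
The plan is to handle the two claims separately, both via elementary divisibility arguments.

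For the first claim, I would let $g := \gcd(r_1, \dots, r_d)$ and observe that $g$ divides every $r_i$ and therefore divides the sum $\sum_{i=1}^d x_i r_i$. On the other hand, $g$ divides $\lcm\{r_1, \dots, r_d\}$, which by the R-multiplicity hypothesis divides $1 + \sum_{i=1}^d x_i r_i$. Combining these two divisibilities forces $g \mid 1$, so $g = 1$.

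For the LCM equality \eqref{equ:lcmeq}, set $L := \lcm\{r_1, \dots, r_d\}$. Since each $s_i = L/r_i$ divides $L$, the inequality $\lcm\{s_1, \dots, s_d\} \mid L$ is immediate, so only the reverse divisibility requires work. I would verify it prime-by-prime: fix a prime $p$ and let $a = v_p(L)$ be the $p$-adic valuation of $L$. Because $\gcd(r_1, \dots, r_d) = 1$ by the first part, there exists some index $j$ with $v_p(r_j) = 0$, and for this index $v_p(s_j) = v_p(L) - v_p(r_j) = a$. Hence $p^a$ divides $s_j$, and therefore divides $\lcm\{s_1, \dots, s_d\}$. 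Running over all primes dividing $L$ yields $L \mid \lcm\{s_1, \dots, s_d\}$, completing the proof.

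Neither step should present a real obstacle; the only mild subtlety is remembering to invoke the coprimality conclusion of the first half when proving the second, since without $\gcd(r_1, \dots, r_d) = 1$ the LCM identity can fail (for example, $\lcm\{2, 2\} = 2$ but $\lcm\{2/2, 2/2\} = 1$). Everything else is a routine valuation argument, so I would expect the write-up to be short.
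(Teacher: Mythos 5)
Your proof is correct and takes essentially the same approach as the paper: deduce $\gcd(r_1,\dots,r_d)=1$ from the divisibility forced by the R-multiplicity condition, then establish the LCM identity. The only cosmetic difference is that the paper invokes the general identity $\lcm\{L/r_1,\dots,L/r_d\} = L/\gcd(r_1,\dots,r_d)$ where $L=\lcm\{r_1,\dots,r_d\}$, while you prove the (specialized) statement directly via $p$-adic valuations, which is equally valid and arguably more self-contained.
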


\begin{proof}
  It follows from \eqref{equ:elldefn} that $\gcd(r_1,\dots,r_d)$ has to be $1.$ By the definition of $s_i,$ we can verify that
  \[ \lcm{s_1,\dots,s_d} = \frac{\lcm{r_1,\dots,r_d}}{\gcd(r_1,\dots,r_d)} = \lcm{r_1,\dots,r_d}. \qedhere \]
\end{proof}

Our analysis of families of $\bq$-vectors will require a precise language for studying R-multiplicities of vectors, which we introduce next.

\begin{definition}
  We define $\res{n}:=\{0,1,\dots, n-1\}$ and $[-n]:=\{-n, -(n-1), \dots, -1\}$.
\end{definition}

\begin{definition}\label{defn:division}
  Suppose $\bs = (s_1, \dots, s_d)$ is a vector of positive integers and $\bx = (x_1, \dots, x_d)$ is a vector of integers. 
  Let $\bc = (c_1, \dots, c_d)$ and $\brho = (\rho_1, \dots, \rho_d)$ be two vectors of integers such that for each $i,$
  \begin{equation}
    x_i = c_i s_i + \rho_i.  
    \label{equ:divx}
  \end{equation}
  We say such a pair $(\bc, \brho)$ is an \emph{$\bs$-division of $\bx$}, and $\brho$ is an \emph{$\bs$-remainder} and $\bc$ is an \emph{$\bs$-quotient}. 
  It is clear that any valid $\bs$-quotient or $\bs$-remainder determines a unique $\bs$-division. 
  However, $\bs$-divisions exist nonuniquely. 

  Suppose further $\br = (r_1, \dots, r_d)$ is a vector of positive integers such that $\br$ and $\bs$ are related as in \eqref{equ:sdefn}. 
  We say $\brho$ (or $\bc$ or $(\bc,\brho)$) is \emph{desirable} if 
  \[ \sum_{i=1}^d \rho_i r_i = -1.\]
\end{definition}

\begin{example}
  \label{ex:aka-1desire}
  Assume Setup~\ref{setup1} with $\br=(a, ka-1)$ for some positive integers $a$ and $k$. 
  Then $r_1 = s_2 = a$ and $r_2 = s_1 = ka-1$.
  Suppose $\bx=(c_1(ka-1)-k,c_2a+1)$. (In fact, one can show that any R-multiplicity of $\bx$ is in the form. See Subsection \ref{subsec:setup} and Example \ref{ex:uniquerho}.) 
  Then there is a desirable $\bs$-division of $\bx$ with
  \[ 
    \rho_1 = -k, \quad \rho_2 = 1 \, ,
  \]
  which follows from observing that $(-k) a + 1 \cdot (ka-1) = -1.$
\end{example}

\begin{lemma}\label{lem:desrem}
  Suppose two vectors of positive integers $\bs = (s_1,\dots,s_d)$ and $\br = (r_1, \dots, r_d)$ are related as in \eqref{equ:sdefn}. 
  Then a vector of integers $\brho$ satisfies
  \begin{equation}\label{equ:congto1} 
    \sum_{i=1}^d \rho_i r_i \equiv -1 \quad \mod \lcm{r_1, \dots, r_d} 
  \end{equation}
  if and only if $\brho$ is an $\bs$-remainder of some R-multiplicity $\bx$ of $\br.$
  Moreover, if $\bx$ is an R-multiplicity of $\br,$ there exists a desirable $\bs$-remainder $\brho$ of $\bx$ such that for each $i,$
  \[ \rho_i \in \res{s_i} \text{ or } [-s_i].\]
\end{lemma}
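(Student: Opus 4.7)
My plan is to handle the two claims separately, both by direct manipulation using the key identity $s_i r_i = L := \lcm{r_1,\dots,r_d}$.

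For the first (iff) claim, I would start from any $\bs$-division $x_i = c_i s_i + \rho_i$ and compute
\[
  1 + \sum_{i=1}^d x_i r_i \;=\; 1 + L \sum_{i=1}^d c_i + \sum_{i=1}^d \rho_i r_i,
\]
so that $L$ divides $1+\sum x_i r_i$ if and only if $\sum \rho_i r_i \equiv -1 \pmod{L}$. The forward direction is then immediate: take any $\bs$-division of an R-multiplicity $\bx$ whose $\bs$-remainder is $\brho$. For the reverse direction, given $\brho$ satisfying \eqref{equ:congto1}, I would construct $\bx$ by choosing each $c_i \in \Z$ large enough that $x_i := c_i s_i + \rho_i$ lies in $\Z_{>0}$; the congruence guarantees that the resulting $\bx$ is an R-multiplicity.

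For the second claim, the plan is the standard ``reduce to a canonical remainder, then correct by shifts.'' Given an R-multiplicity $\bx$, first pick the $\bs$-remainder $\brho'$ with $\rho'_i \in \langle s_i\rangle$ for each $i$. By the first part, $\sum \rho'_i r_i \equiv -1 \pmod{L}$. Using the bounds $0 \le \rho'_i r_i \le (s_i-1)r_i = L - r_i$, I get
\[
  0 \;\le\; \sum_{i=1}^d \rho'_i r_i \;\le\; dL - \sum_{i=1}^d r_i,
\]
which forces $\sum \rho'_i r_i = mL - 1$ for some integer $m$ with $1 \le m \le d$. Now I would pick any $m$-subset $J \subseteq \{1,\dots,d\}$ and replace $\rho'_j$ by $\rho'_j - s_j$ for $j \in J$ (absorbing the $s_j$ into $c'_j$ to get a new valid $\bs$-division of the same $\bx$). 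Each such swap decreases $\sum \rho_i r_i$ by exactly $L$ and moves $\rho_j$ from $\langle s_j \rangle$ to $[-s_j]$, so after $m$ swaps the new $\brho$ is desirable and has every coordinate in $\langle s_i\rangle \cup [-s_i]$.

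I do not anticipate a genuine obstacle; the only point requiring care is verifying that $m \in \{1,\dots,d\}$ so that the subset $J$ exists, and this follows directly from the sandwich bounds above (with the edge case $\sum r_i = 1$, which forces $d=1$ and $L=1$, handled identically).
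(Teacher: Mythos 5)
Your proposal is correct and follows essentially the same route as the paper: compute $1+\sum x_i r_i = 1 + L\sum c_i + \sum\rho_i r_i$ via $s_i r_i = L$ for the iff, then reduce to the canonical remainder $\brho'\in\prod\res{s_i}$, deduce $\sum\rho'_i r_i = mL-1$ with $1\le m\le d$, and shift $m$ chosen coordinates by $-s_j$ to reach a desirable remainder. One small improvement you make over the paper's proof: for the reverse implication, the paper simply observes that $\bx=\brho$ is an R-multiplicity, which may have nonpositive entries, whereas you take the $c_i$ large enough that $x_i=c_i s_i+\rho_i>0$; this is the cleaner choice given that multiplicities should be positive integers in the intended context (Setup~\ref{setup1}). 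Your bound $\sum\rho'_i r_i\le dL-\sum r_i$ is slightly tighter than the paper's $dL-d$ but leads to the same conclusion.
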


\begin{proof}
  Suppose $\brho$ is an $\bs$-remainder of some R-multiplicity $\bx$ of $\br.$ Plugging in $x_i = c_i s_i + \rho_i$ and using the fact that $s_i r_i = \lcm{r_1, \dots, r_m}$, we obtain
  \[
    1 + \sum_{i=1}^dx_i r_i = 1+ \sum_{i=1}^d (c_i s_i r_i + \rho_i r_i) \equiv 1+ \sum_{i=1}^d \rho_i r_i \mod \lcm{r_1, \dots, r_d} \, .
  \]
  Thus, \eqref{equ:congto1} follows from the fact that $\bx$ is an R-multiplicity. Conversely, if \eqref{equ:congto1} holds, one sees that $\brho$ is an $\bs$-remainder of $\bx = \brho$ which is an R-multiplicity of $\br.$

  We next show the existence of our specified desirable remainder.
  Let $(\bc, \brho)$ be the (unique) $\bs$-division of $\bx$ such that $\rho_i \in \res{s_i}$ for each $i$.
  As $0 \le \rho_i < s_i,$ we have $0 \le \rho_i r_i < s_i r_i = \lcm{r_1, \dots, r_d}$. 
  Hence,
  \[ 
    0 \le \sum_{i=1}^d \rho_i r_i \le d \cdot \lcm{r_1, \dots, r_d} - d\, .
  \]
  Thus, Equation~\eqref{equ:congto1} implies that $\sum_{i=1}^d \rho_i r_i = m \cdot \lcm{r_1, \dots, r_d} - 1$ for some $1 \le m \le \max(1,d-1)$.
  Note that for each $i$,
  \[ 
    x_i = c_i s_i + \rho_i = (c_i+1) s_i + (\rho_i - s_i) \, ,
  \]
  where $\rho_i - s_i \in [-s_i]$. 
  It is straightforward to verify that if we let $(\bc', \brho')$ be the $\bs$-division of $\bx$ obtained from $(\bc, \brho)$ by choosing $m$ indices $j_1, \dots, j_m$ and replacing each $(c_{j_p}, \rho_{j_p})$ with $(c_{j_p}+1, \rho_{j_p} - s_{j_p})$, then $(\bc', \brho')$ is desirable and satisfies that $\rho_i' \in \res{s_i} or [-s_i]$ for each $i$.
%
\end{proof}

\begin{lemma}\label{lem:desell}
  Assume Setup~\ref{setup1}.
  Suppose $(\bc, \brho)$ is a desirable $\bs$-division of $\bx.$ Then
  \[
    \ell = \ell(\bq) = \sum_{i=1}^d c_i \, .
  \]
\end{lemma}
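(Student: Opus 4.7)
The plan is to prove this by a direct substitution into the defining equation for $\ell$.

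First I would unpack the hypothesis. Since $(\bc, \brho)$ is a desirable $\bs$-division of $\bx$, we have by Definition~\ref{defn:division} that $x_i = c_i s_i + \rho_i$ for every $i$, and moreover $\sum_{i=1}^d \rho_i r_i = -1$. Also, by definition $s_i r_i = \lcm{r_1, \dots, r_d}$ for each $i$.

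Next I would substitute $x_i = c_i s_i + \rho_i$ into the left-hand side of \eqref{equ:elldefn}:
\[
1 + \sum_{i=1}^d x_i r_i = 1 + \sum_{i=1}^d (c_i s_i + \rho_i) r_i = 1 + \sum_{i=1}^d c_i (s_i r_i) + \sum_{i=1}^d \rho_i r_i.
\]
Using $s_i r_i = \lcm{r_1,\dots,r_d}$ to factor out the lcm from the first sum, and applying desirability $\sum_{i=1}^d \rho_i r_i = -1$ to collapse the last sum, the right-hand side becomes
\[
1 + \lcm{r_1,\dots,r_d} \cdot \sum_{i=1}^d c_i + (-1) = \lcm{r_1,\dots,r_d} \cdot \sum_{i=1}^d c_i.
\]
Comparing with \eqref{equ:elldefn}, which says $1 + \sum_{i=1}^d x_i r_i = \ell \cdot \lcm{r_1,\dots,r_d}$, and noting that $\lcm{r_1,\dots,r_d} > 0$, we can cancel to conclude $\ell = \sum_{i=1}^d c_i$.

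There is no real obstacle here; the statement is essentially a bookkeeping identity built into the definition of a desirable division. The only thing to be a bit careful about is to invoke the two defining properties of desirability (the decomposition $x_i = c_i s_i + \rho_i$ and the constraint $\sum \rho_i r_i = -1$) in the right order so that the $+1$ and the $-1$ cancel cleanly, leaving the lcm as a common factor.
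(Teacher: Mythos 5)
Your proof is correct and matches the paper's own one-line computation: substitute $x_i = c_i s_i + \rho_i$, use $s_i r_i = \lcm{r_1,\dots,r_d}$, and invoke desirability to cancel the $1 + \sum \rho_i r_i$ term. Same approach, just spelled out more explicitly.
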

\begin{proof}
  $\displaystyle 1 + \sum_{i=1} x_i r_i =  1 + \sum_{i=1} (c_i s_i + \rho_i) r_i = \left( \sum_{i=1} c_i  \right)\cdot \lcm{r_1,\dots, r_d} + \cancelto{0}{\left( 1 + \sum_{i=1}^d \rho_i r_i \right)}.$ 
\end{proof}

\begin{example}
  Building on Example~\ref{ex:aka-1desire} where $\br=(a,ka-1)$ and $\bx=(c_1(ka-1)-k,c_2a+1)$, it is elementary to verify that
  \[
    1+(c_1(ka-1)-k)a+(c_2a+1)(ka-1)=(c_1+c_2)a(ka-1) \, .
  \]
\end{example}

\subsection{$h^*$-Polynomials and Geometric Factorizations}

The following theorem shows that the $h^*$-polynomial for any $\Pq$ can be expressed purely in terms of the vector $\bq$.

\begin{theorem}[Braun, Davis, and Solus \cite{BraunDavisSolusIDP}] \label{thm:qhstar}
  The $h^*$-polynomial of $\Pq$ is given by 
  \[
    \sum_{b=0}^{q_1+q_2+\cdots +q_n}z^{w(b)}
  \]
  where
  \begin{equation}\label{eqn:bweight}
    w(b)=b-\sum_{i=1}^n\left\lfloor\frac{bq_i}{1+\sum_{j=1}^n q_j} \right\rfloor \, .
  \end{equation}
\end{theorem}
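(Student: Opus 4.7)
The natural strategy is the standard cone/fundamental parallelepiped description of the $h^*$-polynomial. Let $v_1=(e_1,1),\dots,v_n=(e_n,1)$, and $v_{n+1}=(-\bq,1)$ be the lattice points in $\Z^{n+1}$ obtained by lifting the vertices of $\Pq$ to height one, and let
\[
  \Pi\;=\;\Bigl\{\sum_{i=1}^{n+1}\lambda_i v_i : 0\le \lambda_i<1\Bigr\}
\]
be the half-open fundamental parallelepiped of $\cone{v_1,\dots,v_{n+1}}$. A classical theorem of Stanley expresses $h^*(\Pq;z)$ as the generating function $\sum_{p\in\Pi\cap\Z^{n+1}} z^{(\text{last coordinate of }p)}$.

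The plan is to enumerate $\Pi\cap\Z^{n+1}$ explicitly. Writing $N:=1+\sum_{j=1}^n q_j$, the first task is to observe that $N$ equals the determinant of the matrix $[v_1|\cdots|v_{n+1}]$ (add $q_i$ times column $i$ to the last column for $i=1,\dots,n$ to produce the column $(0,\dots,0,N)^T$), so $|\Pi\cap\Z^{n+1}|=N$. Next, any point $p=\sum_{i=1}^{n+1}\lambda_i v_i$ in $\Pi\cap\Z^{n+1}$ has coordinates
\[
  p\;=\;\bigl(\lambda_1-\lambda_{n+1}q_1,\ldots,\lambda_n-\lambda_{n+1}q_n,\;\lambda_1+\cdots+\lambda_{n+1}\bigr),
\]
and integrality of the first $n$ entries combined with $\lambda_i\in[0,1)$ forces
\[
  \lambda_i\;=\;\bigl\{\lambda_{n+1}q_i\bigr\}\qquad(1\le i\le n),
\]
where $\{x\}$ denotes fractional part. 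Integrality of the last coordinate, together with $\sum_{i=1}^n q_i=N-1$, then forces $N\lambda_{n+1}\in\Z$, so $\lambda_{n+1}=b/N$ for some $b\in\{0,1,\dots,N-1\}$.

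Conversely, for each such $b$ the recipe $\lambda_{n+1}=b/N$ and $\lambda_i=\{bq_i/N\}$ produces a distinct lattice point of $\Pi$, and since we have already counted $|\Pi\cap\Z^{n+1}|=N$, this is a bijection between $\{0,1,\dots,N-1\}$ and $\Pi\cap\Z^{n+1}$. Finally, one computes the last coordinate of the corresponding lattice point:
\[
  \lambda_{n+1}+\sum_{i=1}^n\{bq_i/N\}\;=\;\frac{b}{N}+\sum_{i=1}^n\Bigl(\frac{bq_i}{N}-\Bigl\lfloor\frac{bq_i}{N}\Bigr\rfloor\Bigr)\;=\;\frac{b(1+\sum q_i)}{N}-\sum_{i=1}^n\Bigl\lfloor\frac{bq_i}{N}\Bigr\rfloor\;=\;w(b).
\]
Summing $z^{w(b)}$ over $b=0,1,\dots,N-1=q_1+\cdots+q_n$ yields the claimed formula. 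The main (very mild) obstacle is just the bookkeeping in the two integrality conditions that pin down $\lambda_{n+1}=b/N$; everything else is the standard cone construction followed by a direct simplification using $\sum q_i=N-1$.
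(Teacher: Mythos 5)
Your proof is correct and follows the standard fundamental parallelepiped approach, which is exactly the argument used in the cited reference of Braun, Davis, and Solus (the present paper states the theorem without proof, citing that source). The bookkeeping is sound: integrality of the first $n$ coordinates pins $\lambda_i=\{\lambda_{n+1}q_i\}$, integrality of the height then forces $N\lambda_{n+1}\in\Z$, and the determinant count guarantees the resulting $N$ points exhaust $\Pi\cap\Z^{n+1}$, giving the formula $w(b)$ for the height.
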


\begin{example}
  \label{ex:payne}
  For integers $w\geq0$, $a\geq 3$, and $t\geq w+2$, Payne~\cite{Payne} introduced the reflexive simplex $\Pq$ with 
  \begin{equation}
    \label{eqn: payne}
    \bq=(\underbrace{1,1,\ldots,1}_{at-1 \text{ times}},\underbrace{a,a,\ldots,a}_{w+1 \text{ times}}) \, ,
  \end{equation}
  in other words we have $\br=(1,a)$ with R-multiplicity $\bx=(at-1,w+1)$.
  It follows from Theorems~\ref{thm:ellgeomfactor} and \ref{thm:case0} below that
  \[
    h^*(\Pq;z)=(1+z^t+z^{2t}+\cdots +z^{(a-1)t})(1+z+z^2+\cdots +z^{t+w})\, .
  \]
\end{example}

In this work we are primarily interested in studying when $h^*(\Pp;z)$ factors as a product of geometric series, similarly to Payne's simplices in Example~\ref{ex:payne}.
We next define language and notation for working with products of geometric series in varying powers of $z$.

\begin{definition} For any $e \in \Z_{>0}$ and $\gamma \in \Z_{\ge 2},$ we call
  \[ \sum_{i=0}^{\gamma-1} z^{i e} = 1 + z^e + z^{2e} + \cdots + z^{(\gamma-1) e} \]
  a \emph{geometric series (in powers of $z$)} of \emph{length} $\gamma$ and with \emph{exponent} $e.$	
  We say a polynomial $f(z)$ in $z$ is a \emph{product of geometric series (in powers of $z$)} if there exists $p \in \Z_{>0}$, $e_1, e_2, \dots, e_p \in \Z_{>0}$ and $\gamma_1, \dots, \gamma_p  \in \Z_{\ge 2}$ such that
  \begin{equation}
    f(z) = \prod_{j=1}^p \sum_{i=0}^{\gamma_j-1} z^{i e_j} = \prod_{j=1}^p \left( 1 + z^{e_j} + z^{2 e_j} + \cdots z^{(\gamma_j-1) e_j} \right).
    \label{equ:geofac}
  \end{equation}
  We also call the right hand side of the above equation a \emph{geometric factorization} of $f(z)$. 
\end{definition}

We remark that geometric factorizations of a polynomial $f$ are not necessarily unique, e.g, $f(z) = 1 + z + z^2 + z^3$ is a geometric series itself, but can also be expressed as $(1+z)(1+z^2)$.
As our first observation regarding geometric factorizations, we show that ordinary geometric series are $h^*$-polynomials for only one family of $\Pq$ simplices.

\begin{prop}\label{prop:oneinteger}
  Assume Setup \ref{setup1}. Then $h^*(\Pq; z)$ is a geometric series if and only if $\bq$ is supported on one integer.
\end{prop}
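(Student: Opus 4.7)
The plan is to use Theorem~\ref{thm:qhstar} for both directions. For the easy backward direction, if $\bq$ is supported on a single integer $r_1$, then Lemma~\ref{lem:gcdlcm} (applied with $d=1$) gives $r_1 = \gcd(r_1) = 1$, so $\bq = (1,1,\ldots,1)$. A direct computation using~\eqref{eqn:bweight} with $1+\sum_j q_j = n+1$ shows that every floor $\lfloor b/(n+1) \rfloor$ vanishes for $b \in \{0,1,\ldots,n\}$, giving $w(b) = b$ and hence $h^*(\Pq;z) = 1 + z + z^2 + \cdots + z^n$, which is a geometric series.

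For the forward direction, I would assume $h^*(\Pq; z) = \sum_{i=0}^{\gamma-1} z^{ie}$ is a geometric series of length $\gamma \ge 2$ and exponent $e \ge 1$, and then pin down $e$ and $\gamma$. The crucial observation is that $[z^1]\, h^*(\Pq;z) \ge 1$: indeed, applying~\eqref{eqn:bweight} with $b = 1$, each $q_i$ satisfies $q_i < 1 + \sum_j q_j$, so every floor equals $0$ and $w(1) = 1$. Comparing with the geometric factorization, where the coefficient of $z$ is $1$ when $e=1$ and $0$ when $e \ge 2$, this forces $e = 1$.

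Once $e = 1$, the polynomial $h^*(\Pq;z) = 1 + z + \cdots + z^{\gamma-1}$ has degree $\gamma - 1$. Since $\Pq$ is reflexive by Setup~\ref{setup1}, its $h^*$-polynomial has degree exactly $n$ (with $h^*_n = h^*_0 = 1$), so $\gamma = n+1$. Evaluating the identity of Theorem~\ref{thm:qhstar} at $z=1$ yields $\gamma = n+1 = 1 + \sum_{i=1}^n q_i$, hence $\sum_{i=1}^n q_i = n$. Combined with $q_i \in \Z_{>0}$, this forces $q_i = 1$ for every $i$, so $\bq$ is supported on the single integer $1$.

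There is no serious obstacle here: the whole argument hinges on the elementary observation that $w(1) = 1$ for any $\bq$, which immediately constrains the exponent of the putative geometric factorization to be $1$; once $e=1$ is known, a degree comparison and an evaluation at $z=1$ finish the proof.
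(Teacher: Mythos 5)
Your proof is correct. The backward direction is essentially the same as the paper's. In the forward direction you share the paper's key observation that $w(1)=1$ always, which forces the exponent $e=1$, but then you diverge: the paper argues that because the geometric series has exponent $1$ and the $w(b)$ all lie in $\{0,\ldots,\sum q_j\}$ with $w(b)\le b$, the map $w$ must be the identity, and then substitutes $b=\sum q_j$ to force every $r_i=1$; you instead invoke reflexivity (which Setup~\ref{setup1} guarantees, since $\bx$ is an R-multiplicity) to get $\deg h^*=n$, hence $\gamma=n+1$, and then evaluate Theorem~\ref{thm:qhstar} at $z=1$ to get $\gamma=1+\sum q_i$, concluding $\sum q_i=n$ and so $q_i=1$ for all $i$. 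Both routes are valid; yours is arguably cleaner in that it avoids the implicit ``$w$ is a bijection with $w(b)\le b$, hence identity'' step that the paper glosses over, at the cost of leaning on reflexivity and Hibi's symmetry result, whereas the paper's closing step is purely combinatorial from the formula for $w$.
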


\begin{proof}
  Suppose $\bq$ is supported on one integer $r,$ i.e. $q=(r^x)$ for some positive integers $r$ and $x$.
  Since $r$ divides $1 + xr,$ we have that $r = 1$ and $x$ can be any positive integer. Applying Theorem \ref{thm:qhstar}, we immediately obtain that 
  \[ h^*(\Pq; z) = \sum_{b=0}^{x} z^{w(b)} =  \sum_{b=0}^{xr} z^{b},\]
  which is a geometric series of length $1+xr$ and with exponent $1$.

  Conversely, assume $h^*(\Pq; z)$ is a geometric series. Note that 
  \[ w(1)=1-\sum_{i=1}^d x_i \left\lfloor\frac{r_i}{1+\sum_{j=1}^d x_j r_j} \right\rfloor \, = 1. \]
  Hence, $z^1$ appears in $h^*(\Pq; z)$. 
  This implies that $h^*(\Pq; z)$ is a geometric series with exponent $1.$ 
  Thus, we must have that for each $b$ with $0 \le b \le \sum_{j=1}^d x_j r_j,$
  \[ w(b)=b-\sum_{i=1}^d x_i \left\lfloor\frac{br_i}{1+\sum_{j=1}^d x_j r_j} \right\rfloor \, = b.\]
  Thus, $b r_i < 1 + \sum_{j=1}^d x_j r_j$ for all such $b$.
  Considering the case where $b = \sum_{j=1}^d x_j r_j$, we must have $r_i=1$ for all $i$.
  Hence $\Pq$ is supported on one integer $r=1.$
\end{proof}

\subsection{Free Sums Create New Kronecker $h^*(\Pq;z)$}
For two reflexive simplices $\Pq$ and $\Pp$ with Kronecker $h^*$-polynomials, there exists an operation that produces a new simplex $\Delta_{(1,\by)}$ that is reflexive with a Kronecker $h^*$-polynomial.
We say that $P\oplus Q:=\conv{P\cup Q}$ is an \emph{affine free sum} if, up to unimodular equivalence, $P\cap Q = \{0\}$ and the affine span of $P$ and $Q$ are orthogonal coordinate subspaces of $\R^n$.
Suppose further that $P\subset \R^n$ and $Q\subset\R^m$ are reflexive polytopes with $0\in P$ and the vertices of $Q$ labeled as $v_0,v_1,\ldots,v_m$.  
For every $i=0,1,\ldots,m$, we define the polytope
\[
  P\ast_i Q:=\conv{(P \times 0^m)\cup(0^n\times Q-v_i)}\subset\R^{n+m}.
\]
The following theorem indicates that affine free sum decompositions can be detected from the $\bq$-vector defining $\Pq$ and induce a product structure for $h^*$-polynomials.
\begin{theorem}[Braun, Davis \cite{BraunDavisReflexive}]
  \label{thm:freesumdecomp}
  If $\Pp$ and $\Pq$ are full-dimensional reflexive simplices with $\bp=(p_1,\ldots,p_n)$ and $\bq=(q_1,\ldots,q_m)$, respectively, then $\Pp *_0 \Pq$ is a reflexive simplex $\Delta_{(1,\by)}$ with $\by=(p_1,\ldots,p_n,sq_1,\ldots,sq_m)$ where $s = 1+ \sum_{j=1}^n p_j$.
  Moreover, if $\Delta_{(1,\by)}$ arises in this form, then it decomposes as a free sum.
  Further, if $\Pp$ and $\Pq$ are reflexive, then $h^*(\Pp *_0 \Pq;z)=h^*(\Pp;z)h^*(\Pq;z)$.
\end{theorem}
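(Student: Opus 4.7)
The plan is to establish all three claims by reducing everything to $\Delta_{(1, \by)}$ via an explicit affine unimodular equivalence. First I would identify the vertices of $\Pp *_0 \Pq$: taking $v_0 = -\bq$ so that $v_0$ is translated to the origin, the vertices in $\R^{n+m}$ become $(\be_i, 0)$ for $i = 1, \ldots, n$, together with $(-\bp, 0)$ and $(0, \be_j + \bq)$ for $j = 1, \ldots, m$.

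Next, I would introduce the affine map $\phi : \R^{n+m} \to \R^{n+m}$ defined by $\phi(x, y) = (x,\, y + (\sum_{i=1}^n x_i - 1)\bq)$. Its linear part is unitriangular with integer entries and its translation is an integer vector, so $\phi$ is a lattice affine isomorphism. A direct verification yields $\phi(\be_i, 0) = \be_i$, $\phi(0, \be_j + \bq) = \be_{n+j}$, and $\phi(-\bp, 0) = (-\bp, -s\bq) = -\by$, so $\phi$ carries $\Pp *_0 \Pq$ onto $\Delta_{(1,\by)}$. This proves the first assertion; reflexivity of $\Delta_{(1,\by)}$ then follows from \eqref{equ:conrads}: with $t := 1 + \sum_j q_j$ we have $1 + \sum_k y_k = st$, and the divisibilities $p_i \mid s \mid st$ and $s q_j \mid st$ (equivalent to $q_j \mid t$) follow from reflexivity of $\Pp$ and $\Pq$. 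The second (converse) assertion is immediate by applying $\phi^{-1}$ to $\Delta_{(1,\by)}$.

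For the $h^*$-polynomial factorization, since unimodular equivalence preserves $h^*$ it suffices to show $h^*(\Delta_{(1, \by)}; z) = h^*(\Pp; z)\, h^*(\Pq; z)$. Applying Theorem~\ref{thm:qhstar}, I would employ the bijection $c = a' t + b'$ with $a' \in \{0, \ldots, s-1\}$ and $b' \in \{0, \ldots, t-1\}$ and verify the weight identity $w_y(a' t + b') = w_p(a') + w_q(b')$. The $q$-sum splits at once because $a' q_j \in \Z$, giving $\sum_j \lfloor c q_j/t \rfloor = a'(t-1) + \sum_j \lfloor b' q_j/t \rfloor$. The $p$-sum uses the reflexivity $p_i \mid s$: writing $s = k_i p_i$ yields $c p_i/(st) = a'/k_i + b'/(k_i t)$, and a short bound using $a' \bmod k_i < k_i$ together with $b' < t$ shows the extra $b'$-contribution cannot cross an integer threshold, so $\lfloor c p_i/(st) \rfloor = \lfloor a' p_i / s \rfloor$. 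Combining these yields $w_y(c) = w_p(a') + w_q(b')$ and hence the desired product factorization.

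The main obstacle I anticipate is choosing the bijection correctly. The symmetric-looking candidate $c = a + s b$ does not decouple the floor sums cleanly, whereas $c = a' t + b'$ does: it respects the asymmetric roles of the denominators $s$ and $t$ appearing in the two families of floor terms, and lets the divisibilities $p_i \mid s$ and $q_j \mid t$ simplify each sum independently.
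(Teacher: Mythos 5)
The paper cites this theorem from the Braun--Davis reference rather than proving it, so there is no in-paper argument to compare against line by line. That said, your proposal is correct and complete. The affine map $\phi(x,y) = (x,\, y + (\sum_i x_i - 1)\bq)$ has block lower-unitriangular linear part with an integer translation, hence is lattice-preserving, and it does carry the three families of vertices $(\be_i,0)$, $(0,\be_j+\bq)$, $(-\bp,0)$ onto $\be_i$, $\be_{n+j}$, $-\by$ respectively. The reflexivity check via \eqref{equ:conrads} with $1+\sum_k y_k = st$ using $p_i \mid s$ and $q_j \mid t$ is exactly right, and the converse is immediate from $\phi^{-1}$.

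For the $h^*$-factorization, the bijection $c = a't + b'$ with $a'\in\res{s}$, $b'\in\res{t}$ is the right one, and the weight-splitting computation is sound: the $q$-terms split because $a'q_j\in\Z$, and the $p$-terms split because with $s = k_i p_i$ and $a' = m_i k_i + r_i$ ($0 \le r_i < k_i$) one has
\[
\frac{a'p_i}{s} + \frac{b' p_i}{st} = m_i + \frac{r_i t + b'}{k_i t}, \qquad r_i t + b' \le (k_i-1)t + (t-1) = k_i t - 1 < k_i t,
\]
so the floor does not move. This gives $w_y(a't+b') = w_p(a') + w_q(b')$ and hence the product formula. Your observation that $c = a + s b$ fails to decouple (since $sbq_j/t$ is not a clean multiple of $bq_j/t$) is also correct and explains why the asymmetric choice is forced by $\by = (\bp, s\bq)$.

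In the original Braun--Davis source the factorization is typically obtained from a more general free-sum/Cayley-type argument valid for all reflexive free sums, not only simplices of the form $\Pq$. Your route is more specialized but more concrete in this paper's setting, since it runs entirely through Theorem~\ref{thm:qhstar} and needs no apparatus beyond what the paper already develops. Either way, there are no gaps: the proof stands.
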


\begin{corollary}
  If $h^*(\Pp;z)$ and $h^*(\Pq;z)$ are Kronecker polynomials, then we also have that $h^*(\Pp *_0 \Pq;z)$ is a Kronecker polynomial.
\end{corollary}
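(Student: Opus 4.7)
The plan is to invoke Theorem~\ref{thm:freesumdecomp} and then observe that the class of Kronecker polynomials is closed under multiplication. Specifically, Theorem~\ref{thm:freesumdecomp} gives the factorization
\[
  h^*(\Pp *_0 \Pq;z) = h^*(\Pp;z)\, h^*(\Pq;z),
\]
so the corollary reduces to verifying that a product of Kronecker polynomials is again Kronecker.

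To check closure under products, I would verify the three defining properties of a Kronecker polynomial separately. First, the product of two monic polynomials is monic. Second, the product of two polynomials with integer coefficients has integer coefficients. Third, the multiset of roots of a product is the union of the multisets of roots of the factors, so if every root of $h^*(\Pp;z)$ and every root of $h^*(\Pq;z)$ lies in the closed complex unit disk, the same holds for $h^*(\Pp *_0 \Pq;z)$. Combining these three observations yields the result.

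There is no real obstacle here; once Theorem~\ref{thm:freesumdecomp} is in hand, the corollary is immediate from the definition of a Kronecker polynomial. (As an aesthetic refinement, one could also note that by Theorem~\ref{thm:Kronecker} both factors are products of cyclotomic polynomials, and thus so is their product, giving an alternative way to see the conclusion.)
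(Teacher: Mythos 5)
Your proof is correct and follows exactly the implicit argument in the paper: Theorem~\ref{thm:freesumdecomp} gives the factorization of $h^*(\Pp *_0 \Pq;z)$, and closure of Kronecker polynomials under multiplication (monic times monic, integer times integer, union of root multisets stays in the unit disk) finishes it. The paper leaves this corollary without a written proof precisely because it is this immediate; your verification of closure, as well as the aside about cyclotomic products, just makes the omitted step explicit.
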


\begin{remark}
  More generally, if $P$ and $Q$ are reflexive polytopes, then free sums of $P$ and $Q$ have $h^*$-polynomials obtained as products of the $h^*$-polynomials of their free summands.
  Thus, the resulting $h^*$-polynomials are also Kronecker when the summands have Kronecker $h^*$-polynomials.
\end{remark}


\section{Factoring $h^*(\Pq;z)$ for Reflexive $\Pq$}\label{sec:factoring}

\subsection{Reflexive $\Pq$  Always Have a Geometric Series Factor in $h^*(\Pq;z)$}
In this subsection, we show that for a reflexive $\Pq$, it is always possible to factor a geometric series from $h^*(\Pq;z)$.
The following polynomial plays a fundamental role in this factorization.
\begin{definition}\label{def:ellpoly}
  Suppose $\br, \bx$, $\ell$ and $\bs$ are as given in Setup \ref{setup1}.
  We define 
  \[
    g_\br^\bx(z):=\sum_{0\leq \alpha < \lcm{r_1,\ldots,r_d}}z^{u(\alpha)}
  \]
  where
  \[
    u(\alpha) = u_\br^\bx(\alpha):=\alpha \ell - \sum_{i=1}^d x_i\left\lfloor  \frac{\alpha}{s_i} \right\rfloor \, .
  \]
\end{definition}

\begin{theorem}\label{thm:ellgeomfactor}
  Assuming Setup \ref{setup1}, we have that
  \[
    h^*(\Pq;z) = \left(\sum_{t=0}^{\ell-1}z^t\right) \cdot g_\br^\bx(z).
  \]
\end{theorem}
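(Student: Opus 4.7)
The plan is to start from the expression
\[
 h^*(\Pq;z) = \sum_{b=0}^{\ell L - 1} z^{w(b)}, \qquad w(b) = b - \sum_{i=1}^d x_i \left\lfloor \frac{b r_i}{\ell L} \right\rfloor,
\]
given by Theorem~\ref{thm:qhstar}, where I have set $L := \lcm{r_1,\dots,r_d}$ and used Setup~\ref{setup1} to rewrite $1 + \sum_j q_j = \ell L$ and to group the $q_j$'s by their distinct values $r_i$. The key observation is that the bijection $(\alpha,t) \mapsto \alpha\ell + t$ from $\res{L} \times \res{\ell}$ to $\res{\ell L}$ reindexes the sum, so it suffices to establish the pointwise identity
\[
 w(\alpha \ell + t) = t + u_\br^\bx(\alpha) \qquad \text{for all } 0 \le \alpha < L, \ 0 \le t < \ell.
\]
Summing this identity over $(\alpha,t)$ gives $h^*(\Pq;z) = \sum_{\alpha=0}^{L-1}\sum_{t=0}^{\ell-1} z^{t+u(\alpha)} = \left(\sum_{t=0}^{\ell-1} z^t\right) g_\br^\bx(z)$, as desired.

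To verify the pointwise identity, I substitute $b = \alpha \ell + t$ into the defining formula for $w$. Using $s_i r_i = L$, the argument of each floor splits as
\[
 \frac{(\alpha\ell + t) r_i}{\ell L} = \frac{\alpha}{s_i} + \frac{t r_i}{\ell L}.
\]
Writing $\alpha = c_i s_i + \rho_i$ with $c_i = \lfloor \alpha/s_i \rfloor$ and $0 \le \rho_i < s_i$ (the ordinary $\bs$-division of Definition~\ref{defn:division} in the remainder range $\res{s_i}$), the floor becomes $c_i + \lfloor \rho_i/s_i + tr_i/(\ell L)\rfloor$. So everything reduces to showing that the second floor vanishes, i.e., that
\[
 \frac{\rho_i}{s_i} + \frac{t r_i}{\ell L} = \frac{r_i(\rho_i \ell + t)}{\ell L} < 1.
\]
This is an elementary bound: since $\rho_i \le s_i - 1$ and $t \le \ell - 1$, the numerator satisfies $r_i(\rho_i \ell + t) \le r_i((s_i-1)\ell + \ell - 1) = r_i s_i \ell - r_i < \ell L$. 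Thus $\lfloor (\alpha\ell + t)r_i/(\ell L)\rfloor = c_i = \lfloor \alpha/s_i\rfloor$, and plugging back yields
\[
 w(\alpha\ell + t) = \alpha\ell + t - \sum_{i=1}^d x_i \lfloor \alpha/s_i\rfloor = t + u_\br^\bx(\alpha),
\]
as required.

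There is no real obstacle here beyond bookkeeping: the argument is a direct reindexing paired with a single floor-function estimate. The only subtle point worth emphasizing is the tightness of the bound $\rho_i \ell + t \le s_i \ell - 1$, which depends crucially on restricting $t$ to the range $\res{\ell}$; choosing $t$ from a larger interval would invalidate the decoupling of the floors and destroy the factorization.
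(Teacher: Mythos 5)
Your proof is correct and follows essentially the same route as the paper's: reindex $b = \alpha\ell + t$ over $\res{L}\times\res{\ell}$, split $\lfloor br_i/(\ell L)\rfloor$ using $s_ir_i = L$, and show the extra term doesn't move the floor past $\lfloor\alpha/s_i\rfloor$. The only presentational difference is that you make the floor estimate fully explicit by writing $\alpha = c_is_i + \rho_i$ and bounding the numerator $r_i(\rho_i\ell + t) \le \ell L - r_i < \ell L$, whereas the paper argues more tersely via the interval inclusion $[\alpha/s_i, (\alpha+1)/s_i) \subset [\lfloor\alpha/s_i\rfloor, \lfloor\alpha/s_i\rfloor + 1)$; your version spells out the detail the paper leaves implicit.
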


\begin{proof} 
  Let $M := \lcm{r_1,\dots,r_d}.$
  Let $0\leq b< \ell M$ and write $b=\alpha \ell + \beta$ for $0\leq \alpha<M$ and $0\leq \beta <\ell$.
  Then using~\eqref{eqn:bweight} we have:
  \begin{align*}
    w(b)=w(\alpha\ell+\beta) & =  \alpha\ell+\beta - \sum_{i=1}^d x_i\left\lfloor  \frac{(\alpha\ell+\beta)r_i}{\ell M} \right\rfloor \\
                             & =  \beta + \alpha\ell - \sum_{i=1}^d x_i\left\lfloor  \frac{\alpha\ell+\beta}{\ell s_i} \right\rfloor 
                               =  \beta + \alpha\ell - \sum_{i=1}^d x_i\left\lfloor  \frac{\alpha}{s_i} + \frac{\beta}{\ell} \frac{1}{s_i}\right\rfloor. 
  \end{align*}
  Since $0\leq \beta<\ell$, we have
  \[
    0\leq \frac{\alpha}{s_i} + \frac{\beta}{\ell} \frac{1}{s_i} < \frac{\alpha+1}{s_i}
  \]
  and thus 
  \[
    w(b)=w(\alpha\ell+\beta)=\beta + \alpha\ell - \sum_{i=1}^d x_i\left\lfloor  \frac{\alpha}{s_i}\right\rfloor = \beta + u(\alpha) \, .
  \]
  Hence, it follows from Theorem~\ref{thm:qhstar} that
  \[
    h^*(\Pq;z) = \sum_{b=0}^{\ell s}z^{w(b)}  = \sum_{\substack{0\leq \alpha < M \\ 0\leq \beta < \ell}}z^{\beta + u(\alpha)} = \left( \sum_{ 0\leq \beta < \ell}z^{\beta} \right) g_\br^\bx(z). \qedhere
  \]
\end{proof}

The following is an immediate consequence of Theorem~\ref{thm:ellgeomfactor}.

\begin{corollary}
  For $\bq=(r_1^{x_1},\ldots,r_d^{x_d})$, we have $h^*(\Pq;z)$ is a Kronecker polynomial if and only if $g_\br^\bx(z)$ is a Kronecker polynomial.
\end{corollary}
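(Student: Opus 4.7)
The plan is to deduce the corollary directly from Theorem~\ref{thm:ellgeomfactor}, which provides the factorization
\[
  h^*(\Pq;z) \;=\; \left(\sum_{t=0}^{\ell-1}z^t\right) g_\br^\bx(z) \, .
\]
The first factor $1+z+\cdots+z^{\ell-1} = (z^\ell-1)/(z-1)$ has all its roots on the unit circle, since they are the non-trivial $\ell$-th roots of unity; in particular, it is already a Kronecker polynomial in its own right, and is a product of cyclotomic polynomials.

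Next I would verify that $g_\br^\bx(z)$ is itself a monic polynomial with integer coefficients. Non-negativity and integrality follow immediately from its definition as the sum $\sum_{0 \le \alpha < \lcm{r_1,\ldots,r_d}} z^{u(\alpha)}$: the coefficient of $z^k$ simply counts the number of $\alpha$ for which $u(\alpha) = k$. Monic-ness follows by comparing leading coefficients in the product: since $\bx$ is an R-multiplicity of $\br$ (as in Setup~\ref{setup1}), $\Pq$ is reflexive, so $h^*(\Pq;z)$ is monic; dividing the monic $h^*(\Pq;z)$ by the monic $\sum_{t=0}^{\ell-1}z^t$ forces $g_\br^\bx(z)$ to be monic as well.

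With these two observations, the corollary reduces to a routine fact about roots of a product: the multiset of roots of $h^*(\Pq;z)$ is the disjoint union of the multisets of roots of the two factors. Because the roots of $\sum_{t=0}^{\ell-1}z^t$ already lie on the unit circle, all roots of $h^*(\Pq;z)$ lie in the closed unit disk if and only if all roots of $g_\br^\bx(z)$ do. Since both $h^*(\Pq;z)$ and $g_\br^\bx(z)$ are monic integer polynomials, this is exactly the Kronecker condition on each, giving the required biconditional. There is no substantive obstacle to overcome; the essential computation was already carried out in the proof of Theorem~\ref{thm:ellgeomfactor}, which is precisely why the authors describe this corollary as an immediate consequence.
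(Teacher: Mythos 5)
Your proof is correct and spells out exactly what the paper leaves implicit when it calls this an ``immediate consequence'' of Theorem~\ref{thm:ellgeomfactor}: the factor $\sum_{t=0}^{\ell-1}z^t$ is itself Kronecker (its roots are the nontrivial $\ell$-th roots of unity), $g_\br^\bx(z)$ is monic integral since $h^*(\Pq;z)$ is monic (reflexivity gives $h_n^*=1$) and the quotient of monic integer polynomials by a monic one stays in $\Z[z]$, and the multiset of roots of a product is the disjoint union of the multisets of roots of the factors. This matches the paper's intended reasoning.
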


\begin{remark}\label{remark:h*}
  If $h^*(\Pq;z)$ has a geometric factorization, then $g_\br^\bx(z)$ does not necessarily have a geometric factorization, although the converse is clearly true. The smallest counterexample is when $\br = (2,5)$ and $\bx= (7,5).$ In this case,
  \begin{align*}
    h^*(\Pq; z) =& 1 + z + 2 z^2 + 4z^3 + 4z^4 + 5z^5 + 6 z^6 + 5z^7 + 4z^8 + 4 z^9 + 2z^{10} + z^{11} + z^{12},
                   \intertext{which can be factored as $(1 + z^2) (1+z^3)^2 (1+z+ z^2 +z^3+z^4)$, and}
                   g_\br^\bx(z) =& 1 + z^2 + 2z^3 + z^4 + z^5 + 2z^6 + z^7 + z^9, 
  \end{align*}
  which cannot be written as a product of geometric series. 
\end{remark}

\begin{remark}
  Another area of interest is identifying lattice polytopes where $h^*(P;z)$ has only real roots; see recent work by Solus~\cite{SolusNumeralSystems} for an investigation of $\Pq$ with this property.
  Theorem~\ref{thm:ellgeomfactor} implies that if $\Pq$ is reflexive with $\ell\geq 3$, then $h^*(\Pq;z)$ is not real-rooted.
Further, while our primary focus in this paper is on factoring $h^*$-polynomials as products of geometric series, there are techniques related to real-rootedness that count the number of unit circle roots of a given polynomial.
For example, if $f(z)$ is degree $n$ and does not have $1$ as a root, then the transformation $\displaystyle g(z)=(z+i)^nf\left(\frac{z-i}{z+i}\right)$ sends unit circle roots of $f$ to real roots of $g$~\cite[Page 7]{ConradsUnitRootsNotes}.
Thus, in this setting $f$ has all unit circle roots if and only if $g$ has only real roots.
It would be of interest to determine if these techniques can be applied productively in the setting of $h^*$-polynomials.
\end{remark}

The next result shows that extending $\bq=(\br,\bx)$ by $\lcm{r_1,\ldots,r_d}$ does not alter the structure of $g_\br^\bx(z)$.

\begin{theorem}\label{thm:lcmextend}
  Let $\bq=(r_1^{x_1},\ldots,r_d^{x_d})$ where $\bx$ is an R-multiplicity of $\br$ and $\ell=\ell(\bq)$.
  Then $\bq '=(r_1^{x_1},\ldots,r_d^{x_d},\lcm{r_1,\ldots,r_d}^{y})$ satisfies
  \[
    h^*(\Delta_{(1,\bq ')};z) = \left(\sum_{t=0}^{\ell+y-1}z^t\right) \cdot g_\br^\bx(z).
  \]
\end{theorem}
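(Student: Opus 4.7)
The plan is to reduce the statement to Theorem~\ref{thm:ellgeomfactor} applied to $\bq'$, and then verify that (i) the length of the geometric series factor changes from $\ell$ to $\ell + y$, and (ii) the $g$-polynomial is unchanged. Let $M := \lcm{r_1, \ldots, r_d}$, and let $(\br^\dagger, \bx^\dagger)$ denote the support vector and multiplicity of $\bq'$. There are two cases depending on whether $M$ is a new entry or already appears in $\br$; both work essentially identically, so I would present the case $M \notin \{r_1, \ldots, r_d\}$ in detail (so $\br^\dagger = (r_1, \ldots, r_d, M)$ and $\bx^\dagger = (x_1, \ldots, x_d, y)$) and note the analogous bookkeeping when $M = r_j$ for some $j$.

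First, I would verify that $\bx^\dagger$ is an R-multiplicity of $\br^\dagger$. Since $M = \lcm{r_1,\ldots,r_d}$ already, we have $\lcm{\br^\dagger} = M$, and because $\bx$ is an R-multiplicity and $M \mid yM$, the quantity $1 + \sum_{i=1}^d x_i r_i + yM$ is divisible by $M$. Solving \eqref{equ:elldefn} for $\bq'$ gives
\[
  1 + \sum_{i=1}^d x_i r_i + yM = \ell M + yM = (\ell + y) M,
\]
so $\ell(\bq') = \ell + y$, accounting for the first factor in the desired identity. Setup~\ref{setup1} is therefore valid for $\bq'$, and Theorem~\ref{thm:ellgeomfactor} yields $h^*(\Delta_{(1,\bq')};z) = \bigl(\sum_{t=0}^{\ell+y-1} z^t\bigr)\cdot g_{\br^\dagger}^{\bx^\dagger}(z)$.

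The main remaining task is to show $g_{\br^\dagger}^{\bx^\dagger}(z) = g_\br^\bx(z)$. Both polynomials are sums over $0 \le \alpha < M$, so it suffices to compare exponents. The key observation is that the new entry $M$ in $\br^\dagger$ has associated $\bs$-value $s^\dagger_{d+1} = M/M = 1$, so its floor contribution is simply $\lfloor \alpha/1 \rfloor = \alpha$. Therefore
\[
  u_{\br^\dagger}^{\bx^\dagger}(\alpha) = \alpha(\ell + y) - \sum_{i=1}^d x_i \left\lfloor \frac{\alpha}{s_i}\right\rfloor - y\alpha = \alpha \ell - \sum_{i=1}^d x_i \left\lfloor \frac{\alpha}{s_i}\right\rfloor = u_\br^\bx(\alpha),
\]
so the exponent sequences coincide. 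When $M = r_j$ for some $j$, one has $s_j = 1$ already and $\bx^\dagger_j = x_j + y$; the extra $-y\alpha$ from replacing $-x_j \lfloor \alpha/s_j \rfloor$ by $-(x_j+y)\lfloor \alpha/s_j \rfloor$ again cancels the $+y\alpha$ from the increase $\alpha\ell \to \alpha(\ell+y)$, giving the same conclusion.

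There is no substantive obstacle in this proof; it is pure unwinding of Definition~\ref{def:ellpoly} together with the identity $\lcm{\br^\dagger} = M$. The only point requiring care is the mild case analysis on whether $M$ is already present in $\br$, which I would address in a single clarifying sentence rather than repeating the computation.
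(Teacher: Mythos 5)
Your proof is correct and follows essentially the same route as the paper: verify $\ell(\bq') = \ell + y$, observe that the new entry $M$ contributes $s$-value $1$ so that the extra $-y\lfloor\alpha/1\rfloor = -y\alpha$ cancels the $+y\alpha$ from the increased $\ell$, conclude $g_{\br'}^{\bx'} = g_\br^\bx$, and invoke Theorem~\ref{thm:ellgeomfactor}. Your side remark on the case $M \in \{r_1,\ldots,r_d\}$ is a harmless additional precision that the paper leaves implicit.
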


\begin{proof}
  Let $M:=\lcm{r_1,\ldots,r_d}$.
  First observe that if $\bx$ is an R-multiplicity of $\br$, then 
  \[
    1+\sum_{i=1}^dx_ir_i+yM = (\ell+y)M
  \]
  and thus $(\bx,y)$ is clearly an R-multiplicity of $(\br,M)$.
  Further, 
  \[
    \lcm{r_1,\ldots,r_d,M}=\lcm{r_1,\ldots,r_d}
  \]
  and thus
  \[
    g_{(\br,M)}^{(\bx,y)}(z):=\sum_{0\leq \alpha < \lcm{r_1,\ldots,r_d}}z^{u(\alpha)}
  \]
  where
  \[
    u_{(\br,M)}^{(\bx,y)}(\alpha) = \alpha (\ell+ y) - \sum_{i=1}^d x_i\left\lfloor  \frac{\alpha}{s_i} \right\rfloor - y\left\lfloor  \frac{\alpha}{1} \right\rfloor =   \alpha (\ell) - \sum_{i=1}^d x_i\left\lfloor  \frac{\alpha}{s_i} \right\rfloor = u_{\br}^{\bx}(\alpha) \, .
  \]
  Hence,
  \[
    g_{(\br,M)}^{(\bx,y)}(z)=g_{\br}^{\bx}(z)
  \]
  and the result follows.
\end{proof}

\subsection{A Useful Form for $g_\br^\bx(z)$}
Our goal in this subsection is to prove Theorem~\ref{thm:gxrthm} below, providing a reformulation of $g_\br^\bx(z)$ that is helpful for establishing factorizations.
We will require the following theorem from elementary number theory.

\begin{theorem}[Generalized Chinese Remainder Theorem] \label{thm:GCRT}
  Suppose $m_1, m_2, \dots, m_d$ are positive integers and $i_1, i_2 \dots, i_d \in \Z.$ Then the system of congruences
  \begin{equation}
    \label{system}
    \begin{cases}
      x \equiv i_1 &\ \mod m_1 \\
      x \equiv i_2 &\ \mod m_2 \\
      \hspace{3mm} \vdots      & \hspace{7mm} \vdots \\
      x \equiv i_d &\ \mod m_d 
    \end{cases}
  \end{equation}
  has a solution if and only if $\gcd(m_j, m_{j'}) \mid (i_j - i_{j'})$ for any pair of indices $(j, j'),$ where $1 \le j < j' \le d.$ Moreover, when there is a solution, it is unique modulo $\lcm{m_1, m_2, \dots, m_d}.$
\end{theorem}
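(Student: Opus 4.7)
The plan is to dispatch necessity and uniqueness directly, then establish existence by induction on $d$. For necessity, if a common solution $x$ exists then $m_j \mid x - i_j$ and $m_{j'} \mid x - i_{j'}$, so $\gcd(m_j, m_{j'})$ divides the difference $(x - i_{j'}) - (x - i_j) = i_j - i_{j'}$. Uniqueness is similar: any two solutions $x, x'$ differ by a common multiple of every $m_j$, hence by a multiple of $\lcm{m_1, \dots, m_d}$.

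For existence I would induct on $d$, with the meaningful base case $d = 2$. Parametrize solutions to the first congruence as $x = i_1 + m_1 t$; substituting into the second reduces it to the linear congruence $m_1 t \equiv i_2 - i_1 \pmod{m_2}$, which by B\'ezout is solvable exactly when $\gcd(m_1, m_2) \mid i_2 - i_1$. In the inductive step I apply the hypothesis to the first $d-1$ congruences to obtain a simultaneous solution $x_0$, unique modulo $M := \lcm{m_1, \dots, m_{d-1}}$, and then invoke the $d = 2$ case on the two-modulus system $x \equiv x_0 \pmod{M}$ and $x \equiv i_d \pmod{m_d}$.

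The one subtle point, and the main obstacle of the argument, is verifying the compatibility condition $\gcd(M, m_d) \mid x_0 - i_d$ needed to apply the $d=2$ case in the inductive step. My plan is to do this prime-by-prime. Fix a prime $p$, set $a_j := v_p(m_j)$, and choose an index $j^*$ achieving $a_{j^*} = \max_{j < d} a_j$, so that $v_p(M) = a_{j^*}$. Then $m_{j^*} \mid x_0 - i_{j^*}$ forces $p^{a_{j^*}} \mid x_0 - i_{j^*}$, while the pairwise hypothesis applied to $(j^*, d)$ yields $p^{\min(a_{j^*}, a_d)} \mid \gcd(m_{j^*}, m_d) \mid i_{j^*} - i_d$. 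Adding these, $p^{\min(a_{j^*}, a_d)}$ divides $x_0 - i_d$; since $\min(a_{j^*}, a_d) = v_p(\gcd(M, m_d))$, multiplying over all primes $p$ gives the desired divisibility.

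As an alternative route one could avoid induction entirely by splitting each congruence $x \equiv i_j \pmod{m_j}$ into prime-power congruences $x \equiv i_j \pmod{p^{v_p(m_j)}}$ and checking that the given compatibility assembles these into a single well-defined residue modulo each maximal prime power $p^{\max_j v_p(m_j)}$; the classical pairwise-coprime CRT then finishes. I would still write up the inductive version because it keeps the compatibility computation localized to one prime at a time and is otherwise entirely self-contained.
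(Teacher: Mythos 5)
The paper does not prove Theorem~\ref{thm:GCRT}; it is invoked as a classical fact from elementary number theory (the non-coprime Chinese Remainder Theorem), so there is no in-paper proof to compare against. Your argument is correct and complete: necessity and uniqueness are immediate, the $d=2$ base case is handled cleanly via B\'ezout applied to $m_1 t \equiv i_2 - i_1 \pmod{m_2}$, and the inductive step correctly reduces to a two-modulus system with the new modulus $M = \lcm{m_1,\dots,m_{d-1}}$. The crux --- verifying $\gcd(M, m_d) \mid x_0 - i_d$ --- is handled properly: for each prime $p$ you pick $j^*$ maximizing $v_p(m_j)$ over $j<d$, so $v_p(M) = v_p(m_{j^*})$, then combine $p^{v_p(m_{j^*})} \mid x_0 - i_{j^*}$ with the pairwise hypothesis on $(j^*, d)$ to get $p^{\min(v_p(m_{j^*}), v_p(m_d))} = p^{v_p(\gcd(M,m_d))}$ dividing $x_0 - i_d$. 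Note that the pairwise hypotheses involving indices other than $j^*$ never get used for the prime $p$; that is fine, since a different $j^*$ may be chosen for each prime, and collectively this is where the full pairwise assumption earns its keep. Your alternative route --- splitting each congruence into prime-power congruences and reassembling via the coprime CRT --- is the other standard proof and is equally valid; either write-up would suffice as a self-contained supplement to the paper's citation.
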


Motivated by the above theorem, for two vectors $\br$ and $\bs$ related by~\eqref{equ:sdefn} we define 
\[ 
  I = I(\br) := \{ \bi = (i_1, \dots, i_d) \in \res{s_1} \times \cdots \times \res{s_d} \ : \ \gcd(s_j, s_{j'}) \mid (i_j - i_{j'}) \text{ for all } 1 \le j < j' \le d \} \, .
\]
The following result is a direct consequence of Theorem \ref{thm:GCRT} and~\eqref{equ:lcmeq}.

\begin{corollary} \label{cor:GCRT}
  For each $\bi \in I(\br)$, there exists a unique $\alpha \in \res{\lcm{r_1,\dots, r_d}}$ such that $\alpha \equiv i_j\bmod s_j$ for each $1 \le j \le d.$
\end{corollary}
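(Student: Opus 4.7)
The plan is to apply the Generalized Chinese Remainder Theorem (Theorem~\ref{thm:GCRT}) directly to the system of congruences $x \equiv i_j \bmod s_j$ for $1 \le j \le d$, using the moduli $m_j := s_j$.

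First I would unpack the hypothesis $\bi \in I(\br)$: by the definition of $I(\br)$, this says precisely that $\gcd(s_j, s_{j'}) \mid (i_j - i_{j'})$ for every pair $1 \le j < j' \le d$. This is exactly the compatibility condition required by Theorem~\ref{thm:GCRT} for the system \eqref{system} (with $m_j = s_j$) to admit a solution. Hence Theorem~\ref{thm:GCRT} produces a solution $\alpha \in \Z$, unique modulo $\lcm{s_1, \dots, s_d}$.

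Next I would invoke Lemma~\ref{lem:gcdlcm}, specifically the identity \eqref{equ:lcmeq}, which yields $\lcm{s_1, \dots, s_d} = \lcm{r_1, \dots, r_d}$. Consequently the solution is unique modulo $\lcm{r_1, \dots, r_d}$, so there is a unique representative $\alpha \in \res{\lcm{r_1, \dots, r_d}} = \{0, 1, \dots, \lcm{r_1, \dots, r_d} - 1\}$ satisfying $\alpha \equiv i_j \bmod s_j$ for each $j$, as required.

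There is essentially no obstacle here: the proof is a one-line invocation of Theorem~\ref{thm:GCRT} combined with the modulus identity from Lemma~\ref{lem:gcdlcm}. The only subtlety worth flagging for the reader is why we need Lemma~\ref{lem:gcdlcm} at all, namely to upgrade ``unique modulo $\lcm{s_1,\dots,s_d}$'' (which Theorem~\ref{thm:GCRT} gives directly) to ``unique modulo $\lcm{r_1,\dots,r_d}$'' (which is the modulus appearing in the statement of the corollary and in the definition of $g_\br^\bx(z)$).
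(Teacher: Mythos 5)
Your proof is correct and matches the paper's own (unstated but indicated) argument exactly: the paper introduces Corollary~\ref{cor:GCRT} as ``a direct consequence of Theorem~\ref{thm:GCRT} and~\eqref{equ:lcmeq},'' which is precisely the two ingredients you combine. Nothing to add or correct.
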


\begin{definition}\label{defn:omega}
  We denote by $\alpha(\bi)$ the unique $\alpha$ assumed by the above corollary, and let 
  \begin{equation}
    \omega_j = \omega_j(\bi) := \left\lfloor \frac{\alpha(\bi)}{s_j} \right\rfloor.	
    \label{equ:omegadefn}
  \end{equation}
  Thus,
  \begin{equation}
    \alpha(\bi) = \omega_j(\bi) \cdot s_j + i_j \, .  
    \label{equ:alphabi}
  \end{equation}
\end{definition}

The following theorem provides an expression for $g_\br^\bx(z)$ that we will rely on throughout the remainder of this work.

\begin{theorem}\label{thm:gxrthm}
  Assume Setup \ref{setup1}.
  Suppose $(\bc, \brho)$ is a desirable $\bs$-division of $\bx.$ Then
  \[ 
    g_\br^\bx(z) = \sum_{\bi \in I(\br)} z^{\sum_{j=1}^d (c_j i_j - \rho_j \omega_j(\bi))} \, .
  \]
\end{theorem}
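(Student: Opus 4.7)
The plan is to show that the exponent $u(\alpha)$ in the definition of $g_\br^\bx(z)$ matches the asserted exponent $\sum_{j}(c_j i_j - \rho_j \omega_j(\bi))$ after we reindex the sum via the Chinese Remainder Theorem.

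First, I would use Corollary~\ref{cor:GCRT} together with Lemma~\ref{lem:gcdlcm} to establish that the map $\bi \mapsto \alpha(\bi)$ is a bijection from $I(\br)$ onto $\res{\lcm{r_1,\dots,r_d}}$. This immediately lets me rewrite
\[
g_\br^\bx(z) = \sum_{\bi \in I(\br)} z^{u(\alpha(\bi))},
\]
reducing the theorem to an identity for $u(\alpha(\bi))$ for each $\bi \in I(\br)$.

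Next, I would compute $u(\alpha(\bi))$ by substituting the available data. From Definition~\ref{defn:omega} we have $\lfloor \alpha(\bi)/s_j \rfloor = \omega_j(\bi)$ and $\alpha(\bi) - s_j \omega_j(\bi) = i_j$. From the $\bs$-division $(\bc, \brho)$ of $\bx$ we have $x_j = c_j s_j + \rho_j$, and, crucially, since $(\bc, \brho)$ is \emph{desirable}, Lemma~\ref{lem:desell} gives $\ell = \sum_{j=1}^d c_j$. Plugging these into the definition $u(\alpha) = \alpha \ell - \sum_{j=1}^d x_j \lfloor \alpha/s_j\rfloor$ yields
\[
u(\alpha(\bi)) = \alpha(\bi) \sum_{j=1}^d c_j - \sum_{j=1}^d (c_j s_j + \rho_j)\, \omega_j(\bi) = \sum_{j=1}^d c_j\bigl(\alpha(\bi) - s_j \omega_j(\bi)\bigr) - \sum_{j=1}^d \rho_j \omega_j(\bi),
\]
and using $\alpha(\bi) - s_j \omega_j(\bi) = i_j$ collapses this to $\sum_{j=1}^d (c_j i_j - \rho_j \omega_j(\bi))$, as desired.

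There isn't really a hard step here; the proof is essentially a bookkeeping exercise once the two right ingredients are identified. The one subtlety worth flagging is the use of desirability: the hypothesis $\sum \rho_j r_j = -1$ is exactly what makes $\ell = \sum c_j$ via Lemma~\ref{lem:desell}, and it is this replacement of $\ell$ that allows the $\alpha$-term to combine with the $c_j s_j \omega_j$ terms into the clean form $c_j i_j$. Without the desirability assumption the exponent would pick up an extra multiple of $\lcm{r_1,\dots,r_d}$ that would destroy the stated formula.
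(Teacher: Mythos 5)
Your proof is correct and follows exactly the paper's approach: reduce via Corollary~\ref{cor:GCRT} to verifying $u(\alpha(\bi)) = \sum_j(c_j i_j - \rho_j\omega_j(\bi))$, then substitute \eqref{equ:divx}, \eqref{equ:omegadefn}, \eqref{equ:alphabi}, and Lemma~\ref{lem:desell}. The only difference is that the paper leaves the substitution as ``straightforward'' whereas you carry it out explicitly, correctly highlighting that desirability is what makes $\ell = \sum_j c_j$ and hence allows the collapse.
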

\begin{proof}
  By Definition~\ref{def:ellpoly} and Corollary \ref{cor:GCRT}, it is enough to verify that for each $\bi \in I(\br),$ we have 
  \begin{equation}
    u(\alpha(\bi))= \alpha(\bi) \ell - \sum_{j=1}^d x_j\left\lfloor  \frac{\alpha(\bi)}{s_j} \right\rfloor=\sum_{j=1}^d (c_j i_j - \rho_j \omega_j(\bi)).
    \label{equ:ualphabi}
  \end{equation}
  However, it is straightforward to show this by using \eqref{equ:divx}, \eqref{equ:omegadefn}, \eqref{equ:alphabi}, and Lemma \ref{lem:desell}.
\end{proof}

In the case where $\br$ and $\bs$ are related by~\eqref{equ:sdefn} with the entries of $\bs$ pairwise coprime, the following proposition provides an alternative description of $\omega_j$, and hence of $g_\br^\bx(z)$.
Recall that $(a \bmod b)$ is the unique integer $a' \in \res{b}$ satisfying $a \equiv a' \pmod{b}.$
\begin{prop}\label{prop:formula_for_alpha_omega}
  Assume Setup \ref{setup1} where $s_1, \dots, s_d$ are pairwise coprime. Then 
  \[ \lcm{r_1, \dots, r_d} = \lcm{s_1, \dots, s_d} = s_1 s_2 \dots s_d\]
  and thus for each $1 \le j \le d,$ we have $\ds r_j = \prod_{j' \neq j} s_{j'}.$

  Suppose $(\bc, \brho)$ is an $\bs$-division of $\bx.$
  Then for each $\bi \in I(\br),$ 
  \begin{equation}\label{equ:alpha}
    \alpha(\bi) = \left(-\sum_{t=1}^d \rho_t r_t i_t \bmod s_1s_2\dots s_d\right).
  \end{equation}
  Furthermore, if $\brho$ is desirable, then for each $1 \le j \le d,$ 
  \begin{equation}
    \omega_j(\bi) = \left( \sum_{t \neq j} \rho_{t} \frac{r_{t}}{s_j} (i_{j}-i_t) \bmod r_j \right).
    \label{equ:omega}
  \end{equation}
\end{prop}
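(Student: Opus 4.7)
The first two equalities are immediate bookkeeping. Pairwise coprimality of $s_1,\ldots,s_d$ gives $\lcm\{s_1,\ldots,s_d\}=s_1s_2\cdots s_d$, and combining with Lemma~\ref{lem:gcdlcm} yields $\lcm\{r_1,\ldots,r_d\}=s_1s_2\cdots s_d$. The defining relation $s_j=\lcm\{r_1,\ldots,r_d\}/r_j$ then forces $r_j=\prod_{j'\neq j}s_{j'}$. In particular, and crucially for the rest of the argument, we have the divisibility
\[
  s_j \mid r_t \quad\text{for every } t\neq j,
\]
since $r_t=\prod_{j'\neq t}s_{j'}$ contains $s_j$ as a factor.

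Second, I would prove the key congruence $\rho_j r_j\equiv -1\pmod{s_j}$ for every $j$. For any $\bs$-division of $\bx$ we have $\rho_i\equiv x_i\pmod{s_i}$, so because $s_ir_i=\lcm\{r_1,\ldots,r_d\}$ is the same for all $i$,
\[
  \sum_{i=1}^d\rho_i r_i \equiv \sum_{i=1}^d x_i r_i \equiv -1 \pmod{\lcm\{r_1,\ldots,r_d\}},
\]
where the last congruence uses that $\bx$ is an R-multiplicity (Setup~\ref{setup1}). Reducing modulo $s_j$ and invoking the divisibility observation above kills every term with $t\neq j$, leaving $\rho_jr_j\equiv -1\pmod{s_j}$.

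Third, I would verify \eqref{equ:alpha}. By Corollary~\ref{cor:GCRT}, since the right-hand side manifestly lies in $\res{s_1\cdots s_d}$, it suffices to show it is congruent to $i_j$ modulo $s_j$ for each $j$. Reducing the sum $-\sum_t \rho_t r_t i_t$ modulo $s_j$, the divisibility observation kills every $t\neq j$ term, leaving $-\rho_j r_j i_j\equiv i_j\pmod{s_j}$ by the previous step. This is exactly the defining congruence of $\alpha(\bi)$.

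Fourth, I would deduce \eqref{equ:omega}. From $\alpha(\bi)=\omega_j s_j+i_j$ and \eqref{equ:alpha}, working modulo $\lcm\{r_1,\ldots,r_d\}=s_jr_j$,
\[
  \omega_j s_j \;\equiv\; -\sum_{t=1}^d \rho_t r_t i_t - i_j \pmod{s_jr_j}.
\]
Here I use the \emph{desirability} hypothesis $\sum_t \rho_t r_t=-1$ to rewrite $-i_j=\sum_t\rho_t r_t i_j$, so that
\[
  -\sum_t\rho_t r_t i_t - i_j \;=\; \sum_t \rho_t r_t (i_j-i_t) \;=\; \sum_{t\neq j}\rho_t r_t(i_j-i_t),
\]
the $t=j$ term vanishing. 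Since $s_j\mid r_t$ for $t\neq j$, I may factor $s_j$ out of the whole sum and then cancel it from the congruence (which is modulo $s_jr_j$), obtaining
\[
  \omega_j \equiv \sum_{t\neq j}\rho_t \,\tfrac{r_t}{s_j}(i_j-i_t) \pmod{r_j}.
\]
Finally, $0\le\alpha(\bi)<\lcm\{r_1,\ldots,r_d\}=s_jr_j$ forces $0\le\omega_j<r_j$, which pins down $\omega_j$ as the reduction modulo $r_j$, yielding \eqref{equ:omega}.

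The argument is essentially a careful CRT reduction; the only subtle point, and the place most likely to trip one up, is to recognize that the formula for $\alpha$ needs only $\bx$ being an R-multiplicity (via the second step), whereas the formula for $\omega_j$ additionally requires the desirable hypothesis to produce the telescoping trick $-i_j=\sum_t\rho_t r_t i_j$ that makes the $t=j$ term drop out and $s_j$ factor cleanly.
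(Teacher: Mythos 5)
Your proof is correct and takes essentially the same route as the paper: establish $\rho_j r_j\equiv -1\pmod{s_j}$ by reducing the R-multiplicity relation modulo $s_j$ (the paper routes this through Lemma~\ref{lem:desrem} rather than re-deriving it, but the content is identical), then verify \eqref{equ:alpha} by CRT and \eqref{equ:omega} by using desirability to rewrite $-i_j=\sum_t\rho_tr_ti_j$, telescope, and cancel $s_j$ from the congruence modulo $s_jr_j$.
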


\begin{proof}
  It is straightforward to verify the conclusions in the first paragraph.

  By the definition of $\alpha(\bi)$ and because the $s_j$'s are pairwise coprime, in order to show \eqref{equ:alpha} it is enough to prove that for each $1 \le j \le d,$
  \begin{equation}
    -\sum_{t=1}^d \rho_{t} r_{t} i_{t}  \equiv i_j \pmod{s_j}.
    \label{equ:alpha1}
  \end{equation}
  However, since $r_t = \prod_{j' \neq t} s_{j'},$ clearly $s_j$ divides $r_t$ for each $t \neq j.$ Hence, 
  $\ds -\sum_{t=1}^d \rho_{t} r_{t} i_{t}  \equiv -\rho_j r_j i_j \pmod{s_j}.$ Next, it follows from Lemma \ref{lem:desrem} that $\ds \sum_{j=1}^d \rho_t r_t \equiv -1 \pmod{s_j}.$ Again, as $s_j$ divides $r_t$ whenever $t \neq j,$ we conclude that $\rho_j r_j \equiv -1 \pmod{s_j}.$ Thus, \eqref{equ:alpha1} follows.

  By the definition of $\omega_j(\bi),$ we see that $\omega_j(\bi) \in \res{r_j}.$ Hence, \eqref{equ:omega} is equivalent to
  \begin{equation}
    \omega_j(\bi) \equiv \sum_{t \neq j} \rho_{t} \frac{r_{t}}{s_j} (i_j-i_t) \pmod{r_j}, 
    \label{equ:omega1}
  \end{equation}
  By \eqref{equ:alpha}, we have that $\alpha(\bi) = -\sum_{t=1}^d \rho_t r_t i_t + M s_1 s_2 \dots s_d = -\sum_{t=1}^d \rho_t r_t i_t + M s_j r_j$ for some integer $M.$ Hence,
  \[\omega_j(\bi) = \frac{\alpha(\bi)- i_j}{s_j} \equiv  \frac{ -\sum_{t=1}^d \rho_t r_t i_t - i_j }{s_j} \pmod{r_j}.\]
  Since $\brho$ is desirable, $\sum_{t=1}^d \rho_t r_t = -1.$ Hence, we can replace $-i_j$ with $\sum_{t=1}^d \rho_t r_t i_j$ in the above equation, from which \eqref{equ:omega1} follows.
\end{proof}


\section{Some Kronecker $h^*$-Polynomials When $\br=(a,ka-1)$}\label{sec:twointegers}

We have seen in Proposition~\ref{prop:oneinteger} that any reflexive $\Pq$ supported on one integer has $\br=(1)$.
The next level of complexity of $\bq$-vectors are those for which $\bq$ has two distinct entries.
Payne's simplices from Example~\ref{ex:payne} are an important example of this type in Ehrhart theory, as they are reflexive polytopes whose $h^*$-polynomials are not unimodal; further, their $h^*$-polynomials factor as a product of geometric series.
In this section we prove four theorems establishing Kronecker $h^*$-polynomials, each theorem corresponding to a family of $\bq$-vectors supported on two integers.
We use the following setup throughout this section.

\subsection{Setup} \label{subsec:setup}
Recall from elementary number theory that for $\br=(r_1,r_2) \in \left( \Z_{>0} \right)^2$ such that $\gcd(r_1,r_2) = 1$, there exists an integer solution $\brho = (\rho_1, \rho_2)$ to $\rho_1 r_1 + \rho_2 r_2 = -1$. Furthermore, if $\brho^* = (\rho_1^*, \rho_2^*)$ is a special integer solution to $\rho_1 r_1 + \rho_2 r_2 = -1$, then all integer solutions are in the form of
\[ \rho_1 = \rho_1^* - r_2 k, \quad \rho_2 = \rho_2^* + r_1 k, \quad \text{for some integer $k$.}\]
It then follows that there exists a unique integer solution $\brho = (\rho_1, \rho_2)$ to $\rho_1 r_1 + \rho_2 r_2 = -1$ where $\rho_1 \in [-r_2]$ and $\rho_2 \in \res{r_1}$.
This implies that desirable $\bs$-remainders are unique in this context.

\begin{setup}\label{setup2} 
  Let $\br=(r_1,r_2) \in  (\Z_{>0})^2$ satisfy $\gcd(r_1,r_2) = 1$, and let $\bs = (s_1, s_2) = (r_2, r_1).$ 
  Let $\brho = (\rho_1, \rho_2)$ be the unique solution to $\rho_1 r_1 + \rho_2 r_2 = -1$ such that $\rho_1 \in [-s_1]$ and $\rho_2 \in \res{s_2}$.
  Let $\bq$ be the vector supported by $\br$ with the R-multiplicity $\bx= (x_1, x_2) \in \left( \Z_{>0} \right)^2$ having the property that $\brho$ is an $\bs$-remainder of $\bx$; that is, for some integers $c_1, c_2,$
  \[ 
    x_1 = c_1 s_1 + \rho_1 \text{ and } x_2 = c_2 s_2 + \rho_2 \, .
  \]
  Thus, $\ell =\ell(\bq)= c_1 + c_2.$
\end{setup}

\begin{example}\label{ex:uniquerho}
	Suppose $\br = (a, ka-1)$ for some integers $a \ge 2$ and $k \ge 1.$ 
  Then $\bs = (ka-1, a),$ $\brho=(-k, 1)$, and $\bx = (c_1(ka-1)-k, c_2 a +1)$ for some integers $c_1 > k/(ka-1)$ and $c_2 \geq 0.$
\end{example}

Since $\gcd(s_1, s_2) = \gcd(r_1, r_2)  = 1$ for this setup, we can always apply Proposition \ref{prop:formula_for_alpha_omega}, yielding the following corollary.
\begin{corollary}\label{cor:alpha_omega_a_ka-1}
  Assume Setup \ref{setup2}. Then for each $\bi = (i_1, i_2) \in I(\br) = \res{r_2} \times \res{r_1},$ we have:
  \begin{align*}
    \alpha(\bi) =& \left( -\rho_1 r_1 i_1 - \rho_2 r_2 i_2 \bmod r_1 r_2 \right) \\
    \omega_1(\bi) =& \left( \rho_2 (i_1 - i_2) \bmod r_1 \right)  \\
    \omega_2(\bi) =& \left( \rho_1 (i_2 - i_1) \bmod r_2 \right) 
  \end{align*}
\end{corollary}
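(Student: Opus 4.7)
The plan is to recognize this as a direct specialization of Proposition~\ref{prop:formula_for_alpha_omega} to the two-entry case, with essentially no new work required. I will verify that the hypotheses of that proposition are satisfied by Setup~\ref{setup2} and then simplify the resulting formulas using the specific form $\bs = (r_2, r_1)$.

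First I would check applicability: Setup~\ref{setup2} stipulates $\gcd(r_1, r_2) = 1$, and since $\bs = (s_1, s_2) = (r_2, r_1)$, this gives $\gcd(s_1, s_2) = 1$, so the $s_j$'s are pairwise coprime. Thus Proposition~\ref{prop:formula_for_alpha_omega} applies. Moreover, since $\gcd(s_1, s_2) = 1$, the divisibility condition defining $I(\br)$ becomes vacuous, giving $I(\br) = \res{s_1} \times \res{s_2} = \res{r_2} \times \res{r_1}$, which matches the indexing set in the statement. Also, Setup~\ref{setup2} assumes $\brho$ is desirable (it satisfies $\rho_1 r_1 + \rho_2 r_2 = -1$), so the desirability hypothesis in the second part of Proposition~\ref{prop:formula_for_alpha_omega} is met.

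Next, I would substitute $d = 2$ into equation \eqref{equ:alpha} to get
\[
  \alpha(\bi) = \left( -\rho_1 r_1 i_1 - \rho_2 r_2 i_2 \bmod s_1 s_2 \right),
\]
and since $s_1 s_2 = r_2 \cdot r_1 = r_1 r_2$, this yields the claimed expression for $\alpha(\bi)$. For $\omega_j(\bi)$, substituting $d = 2$ into \eqref{equ:omega} and noting that $r_2/s_1 = r_2/r_2 = 1$ and $r_1/s_2 = r_1/r_1 = 1$, the single-term sum collapses to
\[
  \omega_1(\bi) = \bigl( \rho_2 (i_1 - i_2) \bmod r_1 \bigr), \qquad \omega_2(\bi) = \bigl( \rho_1 (i_2 - i_1) \bmod r_2 \bigr),
\]
exactly as claimed.

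There is no real obstacle here; the entire content of the corollary lies in the preceding proposition, and the proof reduces to bookkeeping the identifications $s_1 = r_2$, $s_2 = r_1$, and $s_1 s_2 = r_1 r_2$. The only mild care needed is to confirm that desirability of $\brho$ is guaranteed by Setup~\ref{setup2}, which follows immediately from the defining equation $\rho_1 r_1 + \rho_2 r_2 = -1$.
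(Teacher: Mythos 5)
Your proof is correct and matches the paper's approach exactly: the paper also presents this corollary as an immediate specialization of Proposition~\ref{prop:formula_for_alpha_omega}, noting only that $\gcd(s_1,s_2)=\gcd(r_1,r_2)=1$ makes the proposition applicable, and leaves the routine bookkeeping ($s_1=r_2$, $s_2=r_1$, $s_1 s_2 = r_1 r_2$, $r_t/s_j=1$ for $t\neq j$) to the reader. Your write-up simply makes that bookkeeping explicit.
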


The following lemma will be used in the proofs of the theorems in the next subsection.

\begin{lemma}\label{lem:g2supp-ka-1}
  Let $\br=(a,ka-1)$ for some $k\geq 1$ and $a\geq 2$.
  Let $(\bc, \brho)$ be the desirable $\bs$-division of $\bx$ with $\brho = (-k, 1).$ Then
  \[ g_\br^\bx(z) = \sum_{\bi \in \res{ka-1} \times \res{a}} z^{c_1 i_1 + c_2 i_2 - \left\lfloor\frac{i_1-i_2}{a} \right\rfloor}.\]
\end{lemma}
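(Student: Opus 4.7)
The plan is to apply Theorem~\ref{thm:gxrthm} with the desirable $\bs$-division $(\bc,\brho)$ where $\brho=(-k,1)$ (confirmed by $(-k)a+1\cdot(ka-1)=-1$), and then simplify the exponent using the explicit formulas from Corollary~\ref{cor:alpha_omega_a_ka-1}. Since $\gcd(s_1,s_2)=\gcd(ka-1,a)=1$, the set $I(\br)$ is simply $\res{ka-1}\times\res{a}$, so the sum ranges over all pairs $(i_1,i_2)$ claimed in the statement. Plugging $\brho=(-k,1)$ into Theorem~\ref{thm:gxrthm} gives exponent
\[
c_1 i_1 + c_2 i_2 + k\omega_1(\bi) - \omega_2(\bi),
\]
and Corollary~\ref{cor:alpha_omega_a_ka-1} yields $\omega_1(\bi)=((i_1-i_2)\bmod a)$ and $\omega_2(\bi)=(k(i_1-i_2)\bmod(ka-1))$. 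Thus the task reduces to proving the identity
\[
k\omega_1(\bi)-\omega_2(\bi)=-\left\lfloor \frac{i_1-i_2}{a}\right\rfloor.
\]

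To establish this identity, I would write $i_1-i_2=qa+r$ with $q=\lfloor (i_1-i_2)/a\rfloor$ and $r=(i_1-i_2)\bmod a\in\res{a}$, so $\omega_1(\bi)=r$. Then the key arithmetic observation is
\[
k(i_1-i_2)=k(qa+r)=q(ka-1)+(q+kr),
\]
which shows $k(i_1-i_2)\equiv q+kr\pmod{ka-1}$. So to finish, it suffices to check that $q+kr$ already lies in $\res{ka-1}$, i.e., $0\le q+kr\le ka-2$, in which case $\omega_2(\bi)=q+kr$ and the desired identity $k r-(q+kr)=-q$ is immediate.

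The main (and only) delicate step is the range check, which splits into two cases based on the sign of $i_1-i_2$. Using $i_1\in\res{ka-1}$ and $i_2\in\res{a}$: if $i_1\ge i_2$ then $0\le i_1-i_2\le ka-2$, forcing $0\le q\le k-1$, and further in the extremal case $q=k-1$ one must have $r\le a-2$, so $q+kr\le\max\{(k-2)+k(a-1),\,(k-1)+k(a-2)\}=ka-2$; and clearly $q+kr\ge 0$. If instead $i_1<i_2$, then $-(a-1)\le i_1-i_2\le -1$, so $q=-1$ and $r\in\{1,\ldots,a-1\}$, giving $q+kr=kr-1\in[k-1,\,ka-k-1]\subseteq[0,ka-2]$. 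In both cases $\omega_2(\bi)=q+kr$, which completes the proof.

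I expect no substantive obstacle: the statement is essentially a direct substitution into Theorem~\ref{thm:gxrthm} combined with the explicit residue formulas of Corollary~\ref{cor:alpha_omega_a_ka-1}, and the only nontrivial piece is the elementary two-case bound ensuring that the relevant representative of $k(i_1-i_2)$ modulo $ka-1$ is exactly $q+kr$.
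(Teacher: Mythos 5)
Your proposal is correct and takes essentially the same route as the paper: both apply Theorem~\ref{thm:gxrthm} together with Corollary~\ref{cor:alpha_omega_a_ka-1}, reduce to showing $k\omega_1(\bi)-\omega_2(\bi)=-\lfloor(i_1-i_2)/a\rfloor$, and verify this by checking that the representative $q+kr$ (in your notation; $M+k\omega_1(\bi)$ in the paper's) lies in $\res{ka-1}$ via a short two-case bound. The only difference is a cosmetic relabeling of the case split in the range check.
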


\begin{proof} 
  Let $M = \ds \left\lfloor \frac{i_1-i_2}{a} \right\rfloor.$
  Note that $I(\br) = \res{r_2} \times \res{r_1} = \res{ka-1} \times \res{a}.$ 
  Hence, we only need to show that, using the notation from Definition~\ref{defn:omega},
  \[
    -k \omega_1(\bi) + \omega_2(\bi) = \left\lfloor \frac{i_1-i_2}{a} \right\rfloor = M \, ,
  \]
  and the result follows from Theorem~\ref{thm:gxrthm}. 
  Applying Corollary \ref{cor:alpha_omega_a_ka-1}, we get
  \[ \omega_1(\bi) = \left( (i_1 - i_2) \bmod a \right) \quad \text{and} \quad \omega_2(\bi) = \left( k (i_1 - i_2) \bmod (ka-1) \right).\]
  Thus, $i_1 - i_2 = a M +\omega_1(\bi)$ and
  \begin{equation}\label{eqn:alphaomega}
    \omega_2(\bi) =  \left( k (i_1 - i_2) \bmod (ka-1) \right) = \left(M + k \omega_1(\bi) \bmod (ka-1) \right) \, .
  \end{equation}
  Since $(i_1,i_2) \in \res{ka-1} \times \res{a},$ we have that $-(a-1) \le i_1 - i_2 \le (ka-2).$ 
  The proof is complete after we show that the right hand side of~\eqref{eqn:alphaomega} is equal to $M + k \omega_1(\bi),$ which is equivalent to 
  \[
    0 \le M + k \omega_1(\bi) \le ka-2 \, .
  \]
  It is straightforward to verify the left-hand inequality 
  \[
    0 \le M + k \omega_1(\bi)=\left\lfloor \frac{i_1-i_2}{a} \right\rfloor+ k \left( (i_1 - i_2) \bmod a \right)
  \]
  holds by considering the two cases $i_1 -i_2 < 0$ and $i_1 -i_2 \ge 0$, noting the assumption that $k\geq 1$.
  One can similarly verify the right-hand inequality holds by considering the two cases $i_1-i_2 < (k-1)a$ and $i_1-i_2 \ge (k-1)a.$
  \commentout{
    Note that
    \[ \alpha(\bi) = \omega_1(\bi) s_1 + i_1 = \omega_2(\bi) s_2 + i_2 \in \res{\lcm{r_1, r_2}}.\]
    In this case, we have
    \[ \omega_1(\bi) (ka-1) + i_1 = \omega_2(\bi) a + i_2 \in \res{ a(ka-1)}.\]
    Thus, 
    \[  -k \omega_1(\bi) + \omega_2(\bi) = \frac{i_1-i_2 - \omega_1(\bi)}{a}.\]
    Then the conclusion follows from the fact that $\omega_1(\bi) \in \res{r_1}= \res{a}$ and $k \omega_1(\bi) - \omega_2(\bi)$ is an integer.
  }
\end{proof}

\subsection{Four Main Theorems}

\begin{theorem}\label{thm:case0}
  For $\br=(1,a)$ or $(a,1)$ and any R-multiplicity $\bx$, the resulting $g_\br^\bx(z)$ is a geometric series, which is a Kronecker polynomial.
\end{theorem}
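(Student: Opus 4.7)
The plan is to prove Theorem~\ref{thm:case0} by a direct computation using the framework of Setup~\ref{setup2}, together with Theorem~\ref{thm:gxrthm} and Corollary~\ref{cor:alpha_omega_a_ka-1}. The key simplification is that whenever one coordinate of $\br$ equals $1$, the corresponding $s_j$ equals $1$, so the index set $I(\br) = \res{s_1}\times\res{s_2}$ collapses in one coordinate and the sum defining $g_\br^\bx(z)$ reduces to a single-variable geometric series.

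First I would treat the case $\br=(1,a)$. Here $(s_1,s_2)=(a,1)$, so $I(\br)=\res{a}\times\{0\}$, and the unique $\brho$ provided by Setup~\ref{setup2} is $\brho=(-1,0)$, forcing every R-multiplicity to take the form $x_1=c_1 a-1,\ x_2=c_2$. Corollary~\ref{cor:alpha_omega_a_ka-1} then yields $\omega_1(\bi)=0$ and $\omega_2(\bi)=i_1$ for each $\bi=(i_1,0)\in I(\br)$, and plugging into Theorem~\ref{thm:gxrthm} gives
\[
  g_\br^\bx(z) \;=\; \sum_{i_1=0}^{a-1} z^{c_1 i_1 - (-1)\cdot 0 - 0\cdot i_1} \;=\; \sum_{i_1=0}^{a-1} z^{c_1 i_1},
\]
a geometric series of length $a$ with exponent $c_1$, hence Kronecker by Theorem~\ref{thm:Kronecker}.

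The case $\br=(a,1)$ is completely symmetric: now $(s_1,s_2)=(1,a)$, $I(\br)=\{0\}\times\res{a}$, and the unique desirable remainder is $\brho=(-1,a-1)$, from which Corollary~\ref{cor:alpha_omega_a_ka-1} produces $\omega_1(\bi)=i_2$ and $\omega_2(\bi)=0$. Theorem~\ref{thm:gxrthm} then collapses to $g_\br^\bx(z)=\sum_{i_2=0}^{a-1}z^{(c_2+1)i_2}$. As a consistency check, combining either of these expressions with Theorem~\ref{thm:ellgeomfactor} and specializing to $\bx=(at-1,w+1)$ recovers Payne's factorization $(1+z+\cdots+z^{t+w})(1+z^t+\cdots+z^{(a-1)t})$ from Example~\ref{ex:payne}.

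I do not anticipate any serious obstacle. The argument is essentially bookkeeping: determine $\brho$ from Setup~\ref{setup2}, read off $\omega_1$ and $\omega_2$ from Corollary~\ref{cor:alpha_omega_a_ka-1}, and observe that the resulting exponent in Theorem~\ref{thm:gxrthm} is linear in the single surviving index. The only care needed is to keep the two symmetric cases straight and to confirm that the two ranges for $\bc$ cover every $\bx \in (\Z_{>0})^2$ satisfying the R-multiplicity condition.
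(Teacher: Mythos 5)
Your proposal is correct and takes essentially the same approach as the paper: both proofs invoke Setup~\ref{setup2}, compute $\brho$ and the $\omega_j$ via Corollary~\ref{cor:alpha_omega_a_ka-1}, and observe that the exponent in Theorem~\ref{thm:gxrthm} collapses to a linear function of the one surviving index because the coordinate with $r_i=1$ forces $s_i=1$ and hence $I(\br)$ is effectively one-dimensional. The only cosmetic difference is that you carry out the $\br=(a,1)$ computation in full (obtaining $\brho=(-1,a-1)$ and exponent $(c_2+1)i_2$), whereas the paper simply declares that case ``identical.''
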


\begin{proof}
  Suppose $\br = (1, a)$ for some integer $a \ge 2.$ 
  Then $\bs =(a, 1),$ $\brho = (-1, 0),$ and $\bx = (ac_1 -1, c_2)$ for some positive integers $c_1, c_2.$
  Then $\omega_1(\bi) = \left( \rho_2 (i_1 - i_2) \bmod r_1 \right) = \left( 0 \bmod r_1 \right) = 0.$ 
  Thus,
  \[ 
    \rho_1 \omega_1(\bi) + \rho_2 \omega_2(\bi) = -1 \cdot 0 + 0 \cdot \omega_2(\bi) = 0 \, .
  \]
  Hence, 
  \[
    g_{\br}^{\bx}(z) = \sum_{\bi \in \res{a} \times \res{1} } z^{c_1 i_1 + c_2 i_2} = \sum_{i_1 \in \res{a}} z^{c_1 i_1} 
  \]
  is a Kronecker polynomial.

  The proof in the case where $\br=(a,1)$ for some integer $a\geq 2$ is identical.
\end{proof}

\begin{theorem}\label{thm:case1}
  Let $a\geq 2$, $k\geq 1$, and $c\geq 1$.
  For $\br = (a, ka-1)$ and $\bx = ( (ka-1)c-k, a ( (ka-1)c-k) + 1),$ we have
  \[ g_\br^\bx(z) = \left(\sum_{j_1 \in \res{ka-1}} z^{(ac-1) j_1} \right) \left(\sum_{j_2 \in \res{a}} z^{cj_2} \right),\]
  which is a Kronecker polynomial.
\end{theorem}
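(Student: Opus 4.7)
My plan is to compute the exponents of $g_\br^\bx(z)$ directly via Lemma~\ref{lem:g2supp-ka-1} and then exhibit an explicit change of variables that converts the single sum into the claimed product of geometric series. Reading off the scalars from Example~\ref{ex:uniquerho}: for $\br=(a,ka-1)$ and the given $\bx$, we have $c_1=c$ and $c_2 = x_1 = (ka-1)c-k$, so Lemma~\ref{lem:g2supp-ka-1} yields
\[
g_\br^\bx(z) = \sum_{(i_1,i_2)\in \res{ka-1}\times\res{a}} z^{c\,i_1 + ((ka-1)c-k)\,i_2 - \lfloor (i_1-i_2)/a\rfloor}.
\]
Writing $i_1-i_2 = aq + r$ with $q=\lfloor (i_1-i_2)/a\rfloor$ and $r\in\res{a}$, so that $i_1 = aq + r + i_2$, a short direct computation collapses the exponent to $(ac-1)(q + ki_2) + c\,r$. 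This motivates setting $j_1 := q + ki_2$ and $j_2 := r$, so the exponent becomes the desired $(ac-1)j_1 + c\,j_2$.

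The main content is then to verify that $(i_1,i_2)\mapsto(j_1,j_2)$ is a bijection from $\res{ka-1}\times\res{a}$ onto itself. The cleanest route I see uses the identity
\[
aj_1 + j_2 = a(q+ki_2) + r = (aq+r) + ak\,i_2 = i_1 + (ak-1)i_2,
\]
which identifies our assignment with the composition $\psi^{-1}\circ\pi$, where $\pi(u,v) = u + (ak-1)v$ and $\psi(u,v) = av + u$ are two bijections from $\res{ka-1}\times\res{a}$ onto $\res{a(ka-1)}$ (both injective, and their images each consist of $a(ka-1)$ integers in an interval of that length). In particular the identity forces $aj_1 + j_2 \in \res{a(ka-1)}$, which together with $j_2 \in \res{a}$ pins down $j_1 \in \res{ka-1}$.

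Once the bijection is in place, we immediately conclude
\[
g_\br^\bx(z) = \sum_{(j_1,j_2)\in\res{ka-1}\times\res{a}} z^{(ac-1)j_1+cj_2} = \left(\sum_{j_1\in\res{ka-1}} z^{(ac-1)j_1}\right)\left(\sum_{j_2\in\res{a}} z^{cj_2}\right),
\]
and the Kronecker property is automatic since any product of geometric series in powers of $z$ factors as a product of cyclotomic polynomials. The real obstacle is just the bookkeeping in the exponent: once one spots the refactorization $(ac-1)(q+ki_2)+cr$, both the bijection and the Kronecker conclusion follow essentially for free from the two standard expansions of integers in $\res{a(ka-1)}$.
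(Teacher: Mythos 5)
Your proof is correct and takes essentially the same route as the paper: both invoke Lemma~\ref{lem:g2supp-ka-1} and then build the required bijection on $\res{ka-1}\times\res{a}$ by composing the two mixed-radix representation maps $(i_1,i_2)\mapsto i_1+(ka-1)i_2$ and $(j_1,j_2)\mapsto aj_1+j_2$ (the paper's $\varphi_1,\varphi_2$), with the exponent identity checked directly. One small typo to fix: you wrote $\psi(u,v)=av+u$, but your own identity $aj_1+j_2=i_1+(ak-1)i_2$ shows you need $\psi(u,v)=au+v$; as written, $\psi$ would not map $\res{ka-1}\times\res{a}$ into $\res{a(ka-1)}$.
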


\begin{proof}
  With the given $\bx,$ we have the desirable $\bs$-division with
  \[ \bc = (c, (ka-1)c-k) \text{ and } \brho = (-k, 1).\]
  Observe that $r_1 = s_2 = a$ and $r_2 = s_1 = ka-1$.
  Thus, by Lemma~\ref{lem:g2supp-ka-1},
  \[ g_\br^\bx(z) = \sum_{\bi \in \res{ka-1} \times \res{a}} z^{c i_1 + ( (ka-1)c-k) i_2 - \left\lfloor\frac{i_1-i_2}{a} \right\rfloor}.\]	
  One sees that it is enough to show that there exists a bijection $\varphi$ on $\res{ka-1} \times \res{a}$ such that for any $\bi=(i_1,i_2) \in \res{ka-1} \times \res{a},$ if $\bj = (j_1, j_2) = \varphi(\bi)$, then
  \begin{equation}
    c i_1 + ( (ka-1)c-k) i_2 - \left\lfloor\frac{i_1-i_2}{a} \right\rfloor= (ac-1)j_1 + c j_2.  
    \label{equ:u-itoj}
  \end{equation}
  We will construct such a bijection below.

  For any $\bi = (i_1, i_2) \in \res{ka-1} \times \res{a},$ we define
  \[ \varphi_1(\bi) = (ka-1) i_2 + i_1, \text{ and } \varphi_2(\bi) = a i_1 + i_2.\]
  It is easy to see that both $\varphi_1$ and $\varphi_2$ are bijections from $\res{ka-1} \times \res{a}$ to $\res{a(ka-1)}.$ Therefore, $\varphi := \varphi_2^{-1} \circ \varphi_1$ is a bijection on $\res{ka-1} \times \res{a}$.
  Now suppose $\bj = (j_1, j_2) = \varphi(\bi)=\varphi(i_1,i_2).$ By the definition of $\varphi,$ we have
  \[ j_1 = \left\lfloor \frac{(ka-1)i_2 + i_1}{a} \right\rfloor, \text{ and } j_2 = \left((ka-1)i_2 + i_1 \right)- aj_1.\]
  Thus,
  \[ j_1 = ki_2 + \left\lfloor \frac{i_1-i_2}{a} \right\rfloor, \text{ and } j_2 = i_1-i_2 - a \left\lfloor \frac{i_1-i_2}{a}\right\rfloor.\] 
  One then can show \eqref{equ:u-itoj} holds directly by plugging in the above. 
\end{proof}

\begin{theorem}\label{thm:case2}
  Let $a\geq 2$ and $c\geq 1$.
  For $\br = (a, a-1)$ and $\bx = ( (a-1)c-1, ac+1)$, we have
  \[ g_\br^\bx(z) = (1+z^{c+1})\left(\sum_{j=0}^{2\left\lfloor\frac{a-1}{2}\right\rfloor}z^{cj}\right)\left(\sum_{j=0}^{\left\lceil\frac{a-1}{2}\right\rceil -1}z^{2cj}\right), 
  \]
  which is a Kronecker polynomial.
\end{theorem}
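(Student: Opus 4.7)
The plan is to invoke Lemma~\ref{lem:g2supp-ka-1} with $k=1$ and $c_1=c_2=c$, which expresses
\[
  g_\br^\bx(z)=\sum_{(i_1,i_2)\in\res{a-1}\times\res{a}} z^{c(i_1+i_2)-\lfloor (i_1-i_2)/a\rfloor}.
\]
Since $i_1\in\res{a-1}$ and $i_2\in\res{a}$ force $i_1-i_2\in\{-(a-1),\ldots,a-2\}$, this floor equals $-1$ precisely when $i_1<i_2$ and equals $0$ otherwise. I will split the sum on this dichotomy and, in the $i_1<i_2$ portion, substitute $i_2':=i_2-1\in\res{a-1}$ so that the strict inequality $i_1<i_2$ becomes $i_1\le i_2'$. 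After relabeling, both sub-sums range over $\{(j,i):0\le j\le i\le a-2\}$, and the exponents in the shifted piece carry an extra $c+1$, producing the factorization
\[
  g_\br^\bx(z)=(1+z^{c+1})\cdot A(z),\qquad A(z):=\sum_{0\le j\le i\le a-2}z^{c(i+j)}.
\]

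The core of the argument is then to identify $A(z)$ with the product of the two remaining geometric series. Setting $u:=z^c$ and summing first in $j$ and then in $i$ via the usual geometric series formula, a direct computation over the common denominator $(1-u)(1-u^2)$ will give
\[
  A(z)=\frac{(1-u^a)(1-u^{a-1})}{(1-u)(1-u^2)}.
\]
On the other hand, with $n:=\lfloor (a-1)/2\rfloor$ and $n':=\lceil (a-1)/2\rceil$, the two geometric series on the right-hand side of the theorem evaluate to $(1-u^{2n+1})/(1-u)$ and $(1-u^{2n'})/(1-u^2)$, respectively. A short parity check shows $\{2n+1,2n'\}=\{a,a-1\}$ regardless of whether $a$ is even or odd, so their product equals the same rational function as $A(z)$, completing the factorization.

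The Kronecker assertion will then follow immediately, since each factor of the resulting geometric factorization is monic with all roots at roots of unity. I do not expect a genuine obstacle here; the main subtleties will be the index shift $i_2\mapsto i_2-1$ that produces the $(1+z^{c+1})$ factor, and the parity swap of the exponents $2n+1$ and $2n'$ between $a$ and $a-1$, which must be observed so that the two parity cases collapse into a single rational function computation.
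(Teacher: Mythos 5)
Your proposal is correct, and the main structure matches the paper's proof: both invoke Lemma~\ref{lem:g2supp-ka-1} with $k=1$, observe that $\lfloor (i_1-i_2)/a\rfloor$ is the indicator of $i_1<i_2$ (up to sign), and split $\res{a-1}\times\res{a}$ into two halves related by a bijection (your index shift $i_2\mapsto i_2-1$ is the same device as the paper's map $\phi$) to pull out the $(1+z^{c+1})$ factor. Where you diverge is the final identity
\[
\sum_{0\le j\le i\le a-2} z^{c(i+j)} \;=\; \left(\sum_{j=0}^{2\lfloor (a-1)/2\rfloor} z^{cj}\right)\left(\sum_{j=0}^{\lceil (a-1)/2\rceil-1} z^{2cj}\right),
\]
which the paper simply asserts is "a straightforward exercise using induction on $a$," whereas you give an explicit closed-form computation $A(z)=\frac{(1-u^a)(1-u^{a-1})}{(1-u)(1-u^2)}$ (with $u=z^c$) together with the parity observation $\{2\lfloor(a-1)/2\rfloor+1,\,2\lceil(a-1)/2\rceil\}=\{a,a-1\}$. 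I checked the rational-function identity and the parity swap; both are correct. Your version fills in the detail the paper elides and avoids the case-by-case induction, so it is a cleaner and more self-contained way to close the argument while otherwise following the paper's route.
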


\begin{proof}
  With the given $\bx,$ we have the desirable $\bs$-division with
  \[ 
    \bc = (c, c) \text{ and } \brho = (-k, 1) \, .
  \]
  Observe that $r_1 = s_2 = a$ and $r_2 = s_1 = a-1$.
  Thus, by Lemma~\ref{lem:g2supp-ka-1},
  \[ 
    g_\br^\bx(z) = \sum_{\bi \in \res{a-1} \times \res{a}} z^{c (i_1 + i_2) - \left\lfloor\frac{i_1-i_2}{a} \right\rfloor} \, .
  \]
  We have further that $0\leq i_1\leq a-2$ and $0\leq i_2\leq a-1$, and thus
  \begin{equation}\label{eqn:indicatorfunction}
    \left\lfloor\frac{i_1-i_2}{a} \right\rfloor = \left\{ \begin{array}{cc}0 & \text{ if }i_1\geq i_2 \\ -1 & \text{ if }i_1<i_2 \end{array}\right. \, .
  \end{equation}
  Define $A:=\{(i_1,i_2)\in \res{a-1} \times \res{a}:i_1\geq i_2\}$ and $B:=\{(i_1,i_2)\in \res{a-1} \times \res{a}:i_1< i_2\}$.
  We define a bijection $\phi:\res{a-1} \times \res{a}\to \res{a-1} \times \res{a}$ by sending $(i_1,i_2)\in A$ to the element $(i_2,i_1+1)\in B$ and sending the element $(i_1,i_2)\in B$ to the element $(i_2-1,i_1)\in A$.

  Now, using~\eqref{equ:ualphabi} and~\eqref{eqn:indicatorfunction} we see that for $(i_1,i_2)\in A$, we have
  \[
    u(\phi(\alpha(i_1,i_2)))=c(i_2+i_1+1)-(-1)=c(i_1+i_2)+c+1=u(\alpha(i_1,i_2))+c+1 \, .
  \]
  Thus, 
  \[
    g_{\br}^\bx(z)=(1-z^{d+1})\sum_{(i_1,i_2)\in A}z^{c(i_1+i_2)} \, .
  \]
  To complete the proof, it is enough to show that 
  \[ 
    \sum_{0 \le i_2\leq i_1 \leq a-2} z^{c(i_1+i_2)} = \left(\sum_{j=0}^{2\left\lfloor\frac{a-1}{2}\right\rfloor}z^{cj}\right)\left(\sum_{j=0}^{\left\lceil\frac{a-1}{2}\right\rceil -1}z^{2cj}\right) \, , 
  \]
  which is a straightforward exercise using induction on $a$. 
\end{proof}

\begin{theorem}\label{thm:case3}
  Let $a\geq 2$ and $c\geq 1$.
  For $\br = (a, a^2-1)$ and $\bx = ( (a^2-1)c-a, a(ac-1)+1)$, we have
  \[ g_\br^\bx(z) = \left(\sum_{j_1 \in \res{a} } z^{(ac-1)j_1} \right)\left(\sum_{j_2 \in \res{a+1} } z^{cj_2}\right)\left(\sum_{j_3 \in \res{a-1} } z^{ (ac+c-1) j_3} \right), 
  \]
  which is a Kronecker polynomial.
\end{theorem}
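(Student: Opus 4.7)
The plan is to prove Theorem~\ref{thm:case3} by constructing an explicit exponent-preserving bijection between the two index sets, following the strategy used in Theorems~\ref{thm:case1} and~\ref{thm:case2}. First I would invoke Lemma~\ref{lem:g2supp-ka-1} with $k = a$ and $(c_1,c_2) = (c,\,ac-1)$ (as dictated by the given $\bx$) to obtain
\[
g_\br^\bx(z) \,=\, \sum_{(i_1,i_2)\,\in\, \res{a^2-1}\times \res{a}} z^{\,c i_1 + (ac-1)i_2 - \lfloor (i_1-i_2)/a\rfloor}.
\]
Parameterizing $i_1 = aq + r$ with $q, r \in \res{a}$ and $(q,r) \neq (a-1,a-1)$, and using the identity $\lfloor(i_1-i_2)/a\rfloor = q - [r < i_2]$ (where $[P]$ denotes the Iverson bracket), the exponent simplifies to $(ac-1)(q + i_2) + cr + [r < i_2]$.

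Next I would define $\phi\colon \res{a^2-1}\times \res{a} \to \res{a}\times \res{a+1}\times \res{a-1}$ as follows. If $r \geq i_2$ and $(r, i_2) \neq (a-1,a-1)$, set $\phi(i_1,i_2) = (q,\,r - i_2,\,i_2)$. If $r < i_2$, set $\phi(i_1,i_2) = (q,\,r - i_2 + a + 1,\,i_2 - 1)$. Finally, in the edge case $r = i_2 = a-1$ (which forces $q \leq a-2$), set $\phi(i_1,i_2) = (a-1,\,a-1-q,\,q)$. In each case, checking that the image lies in $\res{a}\times \res{a+1}\times \res{a-1}$ is a short range verification. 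Exponent preservation, namely $(ac-1)(q+i_2)+cr+[r<i_2]=(ac-1)j_1+cj_2+(ac+c-1)j_3$, is a direct algebraic check in each case using the identity $ac+c-1 = (ac-1)+c$; for instance, in the edge case both sides collapse to $(ac-1)(a-1+q)+c(a-1)$.

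To show $\phi$ is a bijection I would exhibit its inverse explicitly. Given $(j_1,j_2,j_3)$, three sub-cases cover the target: if $j_2+j_3 \leq a-1$ and it is not the case that $j_1 = a-1$ and $j_2+j_3 = a-1$, the preimage is $(q,r,i_2)=(j_1,\,j_2+j_3,\,j_3)$; if $j_1 = a-1$ and $j_2+j_3 = a-1$, the preimage is $(j_3,\,a-1,\,a-1)$; and if $j_2+j_3 \geq a$, the preimage is $(j_1,\,j_2+j_3-a,\,j_3+1)$. A brief check shows that these sub-cases are mutually exclusive and each produces a valid element of $\res{a^2-1}\times \res{a}$---in particular, the forbidden pair $(q,r)=(a-1,a-1)$ is avoided in each branch. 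Summing $z^{\mathrm{exponent}}$ on both sides then yields the claimed geometric factorization, and the resulting polynomial is Kronecker because every geometric series has all its roots at roots of unity.

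The main obstacle is handling the edge case $r = i_2 = a-1$. The naive rule from the generic $r \geq i_2$ branch would produce $j_3 = a-1 \notin \res{a-1}$, so one must reassign those triples through the $j_1 = a-1$ branch. The delicate verification is that the target triples $(a-1,\,a-1-j_3,\,j_3)$ for $j_3 \in \res{a-1}$ are exactly those unreachable by the generic case---unreachable precisely because their naive preimages would require $(q,r)=(a-1,a-1)$, which is forbidden by the constraint $i_1 \in \res{a^2-1}$---so the edge case supplies exactly what is missing, with no collisions.
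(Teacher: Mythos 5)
Your proof is correct and follows essentially the same strategy as the paper's: both reduce to Lemma~\ref{lem:g2supp-ka-1}, peel off the factor indexed by $j_1$ (you via the parameterization $i_1 = aq+r$, the paper via the decomposition $I' = \biguplus_{j_1} \{(j_1a+i_1,i_2) : (i_1,i_2)\in I_0\}$), handle the boundary using an exponent-preserving swap (your edge case $r=i_2=a-1$ is exactly the paper's replacement $(ma-1,a-1)\leftrightarrow(a^2-1,m-1)$), and finish with the same bijection on $\res{a}\times\res{a}\setminus\{(a-1,a-1)\}$ coming from base-$(a+1)$ expansion (your two generic cases reproduce the paper's $\Psi$ defined by $ai_2+i_1=(a+1)j_3+j_2$). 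The only stylistic difference is that you assemble everything into a single explicit bijection $\phi$ with inverse, whereas the paper first modifies the index set and then factors; the content is the same.
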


\begin{proof}
  With the given $\bx,$ we have the desirable $\bs$-division with
  \[ 
    \bc = (c, ac-1) \text{ and } \brho = (-a, 1) \, .
  \]
  Observe that $r_1 = s_2 = a$ and $r_2 = s_1 = a^2-1$. For convenience, for any $\bi \in \Z^2,$ we let
  \[ u(\bi) := c i_1 + (ac-1) i_2 - \left\lfloor\frac{i_1-i_2}{a} \right\rfloor.\]
  It is straightforward to verify that for any $m=1,2,\dots, a-1,$ we have
  \begin{equation}\label{equ:change_i_1}
    u( ma-1, a-1) = u (a^2-1, m-1).
  \end{equation}
  Notice that 
  \[ I' := \res{a^2} \times \res{a} \setminus \{ (ma-1,a-1) \ : \ m=1,2,\dots, a\} \]
  is the set obtained from $I(\br) = \res{a^2-1} \times \res{a}$ by replacing each $(ma-1, a - 1)$ with $(a^2-1, m-1)$ for $m=1,2,\dots,a-1.$
  Hence, it follows from Lemma~\ref{lem:g2supp-ka-1} and \eqref{equ:change_i_1} that
  \[ g_\br^\bx(z) = \sum_{\bi \in I(\br)} z^{u(\bi)} = \sum_{\bi \in I'} z^{u(\bi)}.\]
  Next, one sees that if we let $I_0 := \res{a} \times \res{a} \setminus \{(a-1,a-1)\},$ then $I'$ can be decomposed as
  \[ I' = \biguplus_{j_1 \in \res{a}} \{ ( j_1 a + i_1, i_2 ) \ : \ (i_1,i_2) \in I_0 \}.\]
  Since $u(j_1 a + i_1, i_2) = (ac-1)j_1 + u(i_1,i_2),$ we immediately have that
  \[ g_\br^\bx(z) = \left( \sum_{j_1 \in \res{a}} z^{(ac-1)j_1} \right) \left(\sum_{\bi \in I_0} z^{u(\bi)} \right).\]
  Finally, one sees that for each $\bi = (i_1, i_2) \in I_0$, there exists a unique $(j_2, j_3) \in \res{a+1} \times \res{a-1}$ such that
  \[ a i_2 + i_1 = (a+1) j_3 + j_2.\]
  This defines a bijection $\Psi$ from $I_0$ to $\res{a+1} \times \res{a-1}.$ Since 
  \[ a i_2 + i_1 = (a+1) i_2 + (i_1-i_2) = (a+1)(i_2-1) + (a+1+i_1-i_2) \]
  and $-(a-1) \le i_1 - i_2 \le a-1,$ we conclude that if $(j_2,j_3) = \Psi(i_1,i_2),$ then 
  \[ j_2= i_1-i_2 - (a+1) \left\lfloor\frac{i_1-i_2}{a}\right\rfloor, \quad j_3 = i_2 +  \left\lfloor\frac{i_1-i_2}{a}\right\rfloor.\]
  Using the above, it is easy to verify that
  \[ c j_2 + (ac+c-1) j_3 = u(\bi) =c i_1 + (ac-1) i_2 - \left\lfloor\frac{i_1-i_2}{a} \right\rfloor.\]
  Then our conclusion follows.
\end{proof}


\section{A Classification When $\br=(2,2k-1)$}\label{sec:twoodd}

Given the positive results in Section~\ref{sec:twointegers}, it is natural to ask if it is possible to classify those $(\br,\bx)$ such that $g_\br^\bx(z)$ admits a geometric factorization.
In this section, we prove Theorem~\ref{thm:2odd}, providing a first step in response to this question.
We will work in the context of the following setup.

\subsection{Setup and Classification}
\begin{setup} \label{setup3}
  Let $\br = (2, 2k-1)$ for some integer $k \ge 2.$ Then $\brho = (-k, 1)$ and $\bx = ( (2k-1)c_1 - k, 2 c_2 + 1)$ for some integers $c_1 \ge 1$ and $c_2 \ge 0.$
  Applying Lemma \ref{lem:g2supp-ka-1}, we have that
  \begin{align}\label{equ:2-2k-1-g}
    g_\br^\bx(z)  & = \sum_{\bi \in \res{2k-1} \times \res{2}} z^{c_1 i_1 + c_2 i_2 - \left\lfloor\frac{i_1-i_2}{2} \right\rfloor} \nonumber \\
                  & = 
                    \left( \begin{array}{l}
                             z^0  + z^{c_1}  + z^{2c_1 -1}  + z^{3c_1 - 1}  + \cdots + \\
                             \hspace{.7cm}  z^{(2k-3) c_1 - (k-2)}  + z^{(2k-2) c_1 - (k-1)} + \\
                             \hspace{1.4cm} z^{c_2+1}  + z^{c_1 + c_2}  + z^{2c_1 + c_2}  +z^{3c_1 + c_2 -1}  + \cdots + \\
                             \hspace{2.1cm}  z^{(2k-3)c_1 + c_2 - (k-2)}  + z^{(2k-2)c_1 + c_2 - (k-2)} 
                           \end{array}\right)
  \end{align}
  where the first two lines of~\eqref{equ:2-2k-1-g} correspond to summands with $i_2=0$ and the last two lines of~\eqref{equ:2-2k-1-g} correspond to summands with $i_2=1$.
  Suppose further in our setup that if $g_\br^\bx(z)$ has a geometric factorization, it is given as follows for some $\gamma_1,\ldots,\gamma_p\geq 2$ and $e_1 \le e_2 \le \dots \le e_p$.
  \begin{equation} \label{equ:g-geofac}
    g_\br^\bx(z) = \prod_{j=1}^p \sum_{i=0}^{\gamma_j-1} z^{i e_j} = \prod_{j=1}^p \left( 1 + z^{e_j} + z^{2 e_j} + \cdots z^{(\gamma_j-1) e_j} \right)
  \end{equation}
\end{setup}

Our main result of this section is the following.

\begin{theorem}\label{thm:2odd}
  Suppose $\br = (2, 2k-1)$ for some integer $k \ge 2$. 
  Then $g_\br^\bx(z)$ has a geometric factorization if and only if $(\br, \bx) = ((2,9), (4,3))$ or $(\br,\bx)$ is one of the cases given by Theorems \ref{thm:case1}, \ref{thm:case2}, and \ref{thm:case3}.
  Specifically, assume Setup \ref{setup3} holds and $g_\br^\bx(z)$ admits a geometric factorization. Then $c_1 \neq c_2 + 1$ and two cases arise:
  \begin{enumerate} 
  \item Suppose $c_2 + 1 < c_1.$ 
    \begin{enumerate}
    \item If $c_1 = 2(c_2+1),$ then $\br=(2,3)$ and $c_2$ can be any non-negative integer, which corresponds to applying Theorem \ref{thm:case1} with $a=3$ and $k=1$ to obtain $\bx = (6c-2, 2c-1)$ for $c \ge 1.$
    \item If $c_1 \neq 2(c_2+1)$, then $\br=(2,3)$ and $c_2 = c_1 -2,$ which corresponds to applying Theorem \ref{thm:case2} with $a=3$ to obtain $\bx = (3c+1, 2c-1)$ for $c \ge 2.$
    \end{enumerate}
  \item Suppose $c_1 < c_2 + 1.$
    \begin{enumerate} 	\item If $c_2 + 1 = 2c_1,$ then either $\br=(2,9)$ and $c_1=1$ (so $\bx=(4,3)$), or $\br=(2,3)$ and $c_1$ can be any positive integer. Note that the latter situation corresponds to applying Theorem \ref{thm:case3} with $a=2$ to obtain $\bx = (3c-2, 4c-1)$ for $c \ge 1.$
    \item If $c_2 + 1 \neq 2 c_1$, then $c_2 = (2k-1) c_1 - k,$ which corresponds to cases given by Theorem \ref{thm:case1} with $a=2$. 
    \end{enumerate}
  \end{enumerate}
\end{theorem}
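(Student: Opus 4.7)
The plan is to prove the equivalence in two directions. For the forward direction, that each listed pair $(\br,\bx)$ yields a geometric factorization of $g_\br^\bx(z)$: the pairs in cases (1a), (1b), (2a) with $\br=(2,3)$, and (2b) are instantiations of Theorems~\ref{thm:case1}, \ref{thm:case2}, and \ref{thm:case3} respectively, so the factorizations are inherited directly; the sporadic case $(\br,\bx) = ((2,9),(4,3))$ requires a short explicit computation from formula~(5.1) to produce $g_\br^\bx(z)$ and exhibit a geometric factorization.

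For the reverse direction, I would assume Setup~\ref{setup3} and the existence of a geometric factorization as in the final equation of that setup. Three structural constraints drive the argument: the monomial count $\prod_{j} \gamma_j = 2(2k-1)$; the degree equation $\sum_{j}(\gamma_j-1)e_j = (2k-2)c_1 + c_2 - (k-2)$; and the observation that the smallest positive exponent of $g_\br^\bx(z)$, which must equal $e_1$, is $\min(c_1, c_2+1)$. I would first eliminate $c_1 = c_2+1$: in that case the coefficient of $z^{c_1}$ in $g_\br^\bx(z)$ is $2$ (from the contributions of $(i_1,i_2)=(1,0)$ and $(0,1)$), forcing two of the $e_j$ to equal $c_1$, and tracking the next exponents in~(5.1) against the product form yields no consistent choice of the $\gamma_j$ for any $k \ge 2$. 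In the case $c_2+1 < c_1$, we have $e_1 = c_2+1$; the next exponent of $g_\br^\bx(z)$ is either $c_1$ or $2(c_2+1) = 2e_1$, splitting into subcases~(1a) where $c_1 = 2(c_2+1)$ and~(1b) where a second geometric factor must have exponent $e_2 = c_1$. A comparison of subsequent exponents of~(5.1) against the multiset $\{\sum_j k_j e_j : 0 \le k_j \le \gamma_j-1\}$ then forces $k=2$ in subcase~(1b) together with $c_2 = c_1-2$. The case $c_1 < c_2+1$ is handled symmetrically with $e_1 = c_1$, giving subcases based on whether $c_2+1 = 2c_1$; the sporadic pair $((2,9),(4,3))$ emerges as a small-$k$ exception in subcase~(2a), while subcase~(2b) recovers Theorem~\ref{thm:case1} with $a=2$.

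The main obstacle is the combinatorial bookkeeping of matching the explicit list of exponents in~(5.1), which includes shifts of the form $-\lfloor(i_1-i_2)/2\rfloor$, against the structured multiset produced by the hypothetical geometric factorization. These half-step shifts interact subtly with the arithmetic progressions in each geometric factor, so the proof must carefully handle many low-order exponents to rule out spurious configurations. A secondary difficulty is establishing the absence of further sporadic solutions beyond $((2,9),(4,3))$: the argument must make explicit why sporadic solutions cannot arise once $k$ is large enough, likely by showing that for $k \ge 3$ the degree and monomial-count constraints combined with the low-order exponent matching already determine $\br$ and force $k=2$, so that any extra solution must come from directly checking the few remaining small values of $k$ by hand.
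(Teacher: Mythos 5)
Your proposal correctly identifies the overall case decomposition (eliminating $c_1=c_2+1$, then splitting on $c_1$ versus $c_2+1$, then further splitting on whether one is twice the other), and this matches the paper's organization. You also correctly flag the right constraints: the monomial count $\prod\gamma_j = 2(2k-1)$ and the identification of the smallest positive exponent $\min(c_1,c_2+1)$ with $e_1$. However, I think there is a genuine gap rather than merely missing detail: you acknowledge "the combinatorial bookkeeping" and "absence of further sporadic solutions" as the main obstacles, but you do not propose any mechanism for surmounting them, and the mechanism the paper uses in the hardest subcase is conceptually different from the coefficient-matching you describe.

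Two missing ideas stand out. First, the paper develops a toolbox lemma about geometric factorizations (six parts covering identities such as "if $z^{\mu_1+\mu_2}$ does not appear then $\mu_2=2\mu_1$ and $(1+z^{\mu_1}+z^{2\mu_1})$ must be a factor," a subset-product coefficient inequality, and the inequality $[z^{e-e_j}]f+[z^{e+e_j}]f\ge[z^e]f$); your plan implicitly assumes these facts but does not articulate them, and without them the "comparisons against the multiset" you describe cannot be executed. Second, and more seriously, in case~(2b) the paper does not match exponents directly. It first proves that $\gamma_1=2$, divides $g_\br^\bx(z)$ by the linear factor $(1+z^{c_1})$, writes the quotient as a sum of three explicit pieces, and then uses the divisibility criterion that $(z^{c_1}+1)\mid(z^a+z^b)$ if and only if $a-b$ is an odd multiple of $c_1$ to force $c_2+1=(k-1)(2c_1-1)+(2m+1)c_1$ and then rule out $m\ne 0$. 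This algebraic-division step is the crux of eliminating extra solutions in~(2b), and your proposal does not anticipate it; a purely exponent-tracking argument of the type you outline would face the same infinite family of candidate $(c_1,c_2,k)$ without a way to close the loop.
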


Our proof will require the following two lemmas.
Recall that $[z^t]f(z)$ denotes the coefficient of $z^t$ in $f(z)$.

\begin{lemma}\label{lem:basic-geofac}
  Suppose $f(z)$ has a geometric factorization as given in \eqref{equ:geofac}. Assume $e_1 \le e_2 \le \dots \le e_p$ and express $f(z)$ as
  \begin{equation}
    f(z) = 1 + z^{\mu_1} + z^{\mu_2} + \cdots + z^{\mu_M} \quad \text{ with $0 < \mu_1 \le \mu_2 \le \cdots \le \mu_M$.}
    \label{equ:f(z)_termbyterm}
  \end{equation} 
  Then the following are true.
  \begin{enumerate}[(i)]
  \item \label{itm:e1mu1} $e_1 = \mu_1$.
    Furthermore, if $[z^{e_1}] f = m,$ then $e_1 = e_2 = \dots = e_m \neq e_{m+1}.$ 
  \item \label{itm:e2mu2} If $\mu_2 \neq 2 \mu_1,$ then $e_2 = \mu_2.$
  \item \label{itm:musum} If $z^{\mu_1 + \mu_2}$ does not appear in \eqref{equ:f(z)_termbyterm}, then $\mu_2 = 2 \mu_1$ and $\gamma_1 = 3.$ So $(1 + z^{\mu_1} + z^{2\mu_1})$ is a factor in the geometric factorization \eqref{equ:geofac} of $f(z).$

  \item \label{itm:mtp_mu1} For any $i \in \{2,3,\dots,M\},$ if $\mu_i$ cannot be written as a non-negative integer linear combination of $\mu_{1}, \dots, \mu_{j-1},$ then $\mu_i = e_j$ for some $j.$  
    In particular, if $\mu_i$ is not a multiple of $\mu_1,$ but $\mu_{i'}$ is a multiple of $\mu_1$ for every $1 \le i' < i$, then $\mu_i = e_j$ for some $j.$
  \item \label{itm:compcoeff} For any subset $S \subseteq \{1, 2, \dots, p \},$ and any $t \in \Z_{\ge 0},$ 
    \[ [z^t] f(z) \ge [z^t] \left(\prod_{j \in S} \sum_{i=0}^{\gamma_j-1} z^{i e_j} \right).\]

  \item \label{itm:sumcoeff} For any exponent $e_j$ of the factorization and any $e \ge e_j,$ we have 
    \[ [z^{e-e_j}] f + [z^{e+e_j}] f \ge [z^{e}] f.\]
  \end{enumerate}
\end{lemma}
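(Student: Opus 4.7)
The plan is to extract everything from the combinatorial identity
\[
[z^t] f(z) = \#\{(a_1,\dots,a_p) \in \Z_{\ge 0}^p : a_j \le \gamma_j - 1,\ \sum_j a_j e_j = t\},
\]
obtained by expanding the product in \eqref{equ:geofac}. The crucial observation used repeatedly is that every $e_j$ itself appears as some $\mu_k$, since the tuple with a single $1$ in position $j$ contributes $z^{e_j}$ to $f$.

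For (i), $e_1 \le \sum_j a_j e_j$ for any nonzero tuple, so $\mu_1 = e_1$, and $[z^{e_1}]f$ counts the indices $j$ with $e_j = e_1$. For (ii), I split on whether $m := [z^{\mu_1}]f$ is $\ge 2$ (so automatically $\mu_2 = \mu_1 = e_2$) or $= 1$; in the latter, the only candidates for $\mu_2$ are $2e_1$ (from the first factor, when $\gamma_1 \ge 3$) and $e_2$, so ruling out $\mu_2 = 2\mu_1$ forces $\mu_2 = e_2$. For (iv), if $\mu_i$ equals no $e_j$, then in any representation $\mu_i = \sum_j a_j e_j$ there must be either some $a_j \ge 2$ or at least two positive $a_j$'s, so $\mu_i$ splits as a sum of two or more $e_j$'s, each strictly less than $\mu_i$ and hence equal to some $\mu_k$ with $k < i$, contradicting the hypothesis. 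For (v), factor $f = g_S h_S$ where $h_S = \prod_{j \notin S}(\cdots)$; both factors have nonnegative coefficients with $[z^0]h_S = 1$, yielding the stated inequality. For (vi), separate out one factor with exponent $e_j$, write the remaining product as $g$, and set $A_k := [z^{e - k e_j}]g$ (with $A_{-1} := [z^{e + e_j}]g$, and $A_k = 0$ when out of range). The difference $[z^{e-e_j}]f + [z^{e+e_j}]f - [z^e]f$ then telescopes to $A_{-1} + \sum_{k=1}^{\gamma_j - 2} A_k + A_{\gamma_j} \ge 0$.

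The main obstacle, and the only part requiring genuine case analysis, is (iii). Applying (ii): if $\mu_2 \ne 2\mu_1$, then $e_2 = \mu_2$ and the tuple $(1, 1, 0, \ldots, 0)$ produces $z^{e_1 + e_2} = z^{\mu_1 + \mu_2}$ in $f$, contradicting the hypothesis; the case $m \ge 2$ is similar via $(1,1,0,\ldots)$ across two factors sharing the smallest exponent. So $\mu_2 = 2e_1$ and $\mu_1 + \mu_2 = 3e_1$ must be absent from $f$. Now $\gamma_1 \ge 4$ would produce $z^{3e_1}$ from the tuple $(3, 0, \ldots)$; and $\gamma_1 = 2$ would force some $e_j = 2e_1$ with $j \ge 2$ (to account for the appearance of $z^{2e_1}$), with $(1, 1, 0, \ldots)$ at that index then producing $z^{3e_1}$. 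The only remaining possibility is $\gamma_1 = 3$, giving the claimed factor $1 + z^{\mu_1} + z^{2\mu_1}$.
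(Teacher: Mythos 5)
Your proof is correct, and for parts (i)--(v) it matches the paper's approach (the paper only sketches proofs for (iii) and (vi), calling the others "straightforward exercises"). Your argument for (iii) fills in the paper's terse step "then $\gamma_1 = 3$'' with the explicit case analysis ($\gamma_1 \ge 4$ creates $z^{3e_1}$; $\gamma_1 = 2$ forces some $e_j = 2e_1$, which again creates $z^{3e_1}$), which is exactly what the paper relies on implicitly.

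For part (vi) your method genuinely differs from the paper's. The paper constructs an injection: each tuple $(a_1,\dots,a_p)$ realizing exponent $e$ is sent to a tuple for $e + e_j$ if $a_j < \gamma_j - 1$, and to a tuple for $e - e_j$ if $a_j = \gamma_j - 1$; injectivity gives the inequality on coefficients. You instead peel off the factor $\sum_{k=0}^{\gamma_j-1} z^{k e_j}$, write $f$ as that times $g$, and telescope $[z^{e-e_j}]f + [z^{e+e_j}]f - [z^e]f$ into $A_{-1} + \sum_{k=1}^{\gamma_j-2} A_k + A_{\gamma_j}$, a sum of nonnegative coefficients of $g$. Both are valid; your algebraic version is arguably cleaner because it avoids having to verify injectivity, while the paper's gives a bijective interpretation that one could use to characterize when equality holds. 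No gaps.
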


\begin{proof}
  We omit the proof for all but parts~\eqref{itm:musum} and~\eqref{itm:sumcoeff}, as the others are straightforward exercises from the definition.
  For part~\eqref{itm:musum}, if $z^{\mu_1 + \mu_2}$ does not appear in~\eqref{equ:f(z)_termbyterm}, then we must have $\mu_2\neq e_2$.
  Hence, by the contrapositive of part~\eqref{itm:e2mu2}, $\mu_2=2\mu_1$.
  Since we assumed that $3\mu_1=\mu_2+\mu_1$ is not an exponent in~\eqref{equ:f(z)_termbyterm}, then $\gamma_1=3$, and we have our desired factor.

  For part~\eqref{itm:sumcoeff}, if $e$ is written as a non-negative integer linear combination $\mathcal{C}$ of $e_1,\ldots,e_p$ using less than $\gamma_j-1$ $e_j$'s, then $\mathcal{C}+e_j$ contributes an exponent in~\eqref{equ:f(z)_termbyterm}.
  If $e$ can only be written as a non-negative integer linear combination $\mathcal{C}$ of $e_1,\ldots,e_p$ using all of the $\gamma_j-1$ $e_j$'s, then $\mathcal{C}-e_j$ contributes an exponent in~\eqref{equ:f(z)_termbyterm}.
  Thus, for each non-negative integer linear combination $\mathcal{C}$ giving $e$, we obtain at least one combination giving either $e-e_j$ or $e+e_j$.
\end{proof}

\begin{lemma}\label{lem:basic_properties_22k-1}
  If Setup \ref{setup3} holds and $g_\br^\bx(z)$ admits a geometric factorization, then the following are true.
  \begin{enumerate}[(i)]
  \item \label{itm:gamma_prod} $2(2k-1) = \prod_{j=1}^p \gamma_j.$ Thus, exactly one of $\gamma_j$'s is even.
  \item \label{itm:c1neqc2+1} $c_1 \neq c_2 + 1.$
  \item \label{itm:11} If $(c_1, c_2) = (1,1),$ then $k=2$ or $5,$ that is, $\br=(2,3)$ or $(2,9).$
  \end{enumerate}
\end{lemma}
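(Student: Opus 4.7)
The plan is to handle the three parts separately, as they have different flavors.

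For part (i), the approach is to evaluate $g_\br^\bx(z)$ at $z=1$. By~\eqref{equ:2-2k-1-g}, $g_\br^\bx(1) = |\res{2k-1} \times \res{2}| = 2(2k-1)$, while~\eqref{equ:g-geofac} gives $g_\br^\bx(1) = \prod_{j=1}^p \gamma_j$. Since $2k-1$ is odd, the $2$-adic valuation of $2(2k-1)$ equals $1$, so exactly one $\gamma_j$ is even.

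For part (ii), I argue by contradiction, assuming $c_1 = c_2 + 1$. A direct inspection of~\eqref{equ:2-2k-1-g} shows that under this assumption, the exponent from $(i_1, i_2) = (m, 0)$ coincides with the one from $(m-1, 1)$ for each $m = 1, \ldots, 2k-2$, so the pairs collapse and
\[
g_\br^\bx(z) = 1 + z^{(2k-1)c_1 - (k-1)} + 2 \sum_{m=1}^{2k-2} z^{e(m)}, \quad e(m) := mc_1 - \lfloor m/2 \rfloor.
\]
When $c_1 \ge 2$ the values $e(m)$ are strictly increasing with $e(2) = 2c_1 - 1 < 2c_1 < 3c_1 - 1 = e(3)$, so $2c_1$ is never an exponent of $g_\br^\bx$. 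Since $[z^{c_1}] g_\br^\bx = 2$, Lemma~\ref{lem:basic-geofac}\eqref{itm:e1mu1} forces $e_1 = e_2 = c_1$, and then Lemma~\ref{lem:basic-geofac}\eqref{itm:compcoeff} with $S = \{1,2\}$ gives $[z^{2c_1}] g_\br^\bx \ge 1$, contradicting $[z^{2c_1}] g_\br^\bx = 0$. When $c_1 = 1$, one computes directly that $g_\br^\bx(z) = 1 + 4z + 4z^2 + \cdots + 4z^{k-1} + z^k$; then $[z^1] = 4$ forces four factors with $e = 1$, but the coefficient of $z^2$ in the subproduct of these factors equals $10 - t$ where $t \le 4$ counts those with $\gamma = 2$, exceeding $[z^2] g_\br^\bx \le 4$.

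For part (iii), after specializing $(c_1, c_2) = (1, 1)$, the cases $k = 2$ and $k = 5$ can be verified directly via $g_\br^\bx(z) = (1+z)(1+z+z^2)$ and $g_\br^\bx(z) = (1+z+z^2)^2(1+z^2)$, respectively. For other $k \ge 3$, a direct computation from~\eqref{equ:2-2k-1-g} yields $g_\br^\bx(z) = 1 + 2z + 4z^2 + 4z^3 + \cdots + 4z^{k-1} + 2z^k + z^{k+1}$. From $[z^1] = 2$ we get exactly $A = 2$ factors with $e = 1$, and enumerating contributions to $[z^2] = 4$ yields $A' + B = 3$ with $A' := |\{j : e_j = 1, \gamma_j \ge 3\}|$ and $B := |\{j : e_j = 2\}|$. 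Evaluating $g_\br^\bx(-1)$ (equal to $2$ for $k$ odd, $0$ for $k$ even) combined with part (i) forces $(A', B) = (2, 1)$ when $k$ is odd and restricts $(A', B) \in \{(1, 2), (2, 1)\}$ when $k$ is even. Counting $[z^3]$ rules out $(1, 2)$ (yielding $[z^3] \ge 5$) and forces $\gamma_1 = \gamma_2 = 3$ in the $(2, 1)$ case. Examining $[z^4]$ next forces the $e = 2$ factor to have $\gamma = 2$ in the $k$ odd case (so $(1+z+z^2)^2(1+z^2)$ is a subfactor) and yields $[z^4] g_\br^\bx \ge 5$ in the $k$ even case, a contradiction. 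Finally, for $k$ odd $\ge 7$, writing $g_\br^\bx(z) = (1+z+z^2)^2(1+z^2) R(z)$ and computing successive coefficients of $R$ gives $r_3 = r_4 = 0$, $r_5 = 2$, and then $r_6 = -1$, contradicting the non-negativity of $R$.

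The hardest part is the case analysis in part (iii), which requires simultaneously tracking the parity of $k$, the structural case $(A', B)$, and successive low-order coefficient constraints on a candidate quotient polynomial. The key rigidity comes from the middle coefficients of $g_\br^\bx$ being uniformly $4$, which leaves almost no room for any geometric subfactor beyond $(1+z+z^2)^2(1+z^2)$ or $(1+z)(1+z+z^2)$; the contradiction for $k$ odd $\ge 7$ emerges precisely at the coefficient $r_6$ of the quotient.
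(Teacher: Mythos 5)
Your proof is correct and, at the structural level, follows the same route as the paper for all three parts: part (i) compares $g(1)$ (equivalently, counts monomials) in the two expressions, and part (ii) is essentially identical to the paper's argument (the paper simply uses $[z^2]\,g \ge \binom{4}{2}=6$ directly in the $c_1=1$ case, whereas you compute $10-t$; both give the same contradiction). For part (iii), however, you use a genuinely different organizational tool: you evaluate $g_\br^\bx(-1)$ (getting $2$ for $k$ odd, $0$ for $k$ even) and combine this with the ``exactly one $\gamma_j$ even'' constraint from (i) to pin down the parity of every $\gamma_j$ at once. This immediately gives $(A',B)=(2,1)$ and $\gamma_3=2$ in the $k$ odd case, and forces $\gamma_3\ge 3$ in the $k$ even case, avoiding the paper's WLOG case split on whether $\gamma_1=2$. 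The two approaches converge to the same subfactor $(1+z+z^2)^2(1+z^2)$ and the same quotient-coefficient contradiction (the paper at $[z^6]$ of the full product, you at $r_6=-1$ of the quotient $R$). The $g(-1)$ evaluation is a cleaner way to extract the parity structure and is worth keeping. Two small caveats: first, your stated coefficient values ($[z^3]=4$, $[z^4]=4$, $[z^5]=4$, $[z^6]=4$) are only accurate for $k\ge 7$; for $k=3,4$ the relevant coefficients are $2$ rather than $4$, so the phrases ``forces $\gamma_1=\gamma_2=3$'' and ``yields $[z^4]\ge 5$'' should instead read as immediate contradictions in those subcases (which the paper sidesteps by verifying $(2,5)$ and $(2,7)$ directly). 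Second, in the $k$ even $[z^4]$ step you should explicitly invoke the fact that $\gamma_3$ must be odd (hence $\ge 3$), since the bound $[z^4]\ge 5$ uses $(1+z^2+z^4)$ rather than $(1+z^2)$ as the third factor; this follows from your $g(-1)=0$ argument combined with (i) but is currently left implicit.
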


\begin{proof}
  \begin{enumerate}[(i)]
  \item 
    Comparing the number of monomials in equations~\eqref{equ:2-2k-1-g} and~\eqref{equ:g-geofac}, the result follows.

  \item 
    Assume the contrary that $c_1 = c_2 + 1.$ Then \eqref{equ:2-2k-1-g} becomes
    \[g_\br^\bx(z) = \begin{matrix} z^0 &+ z^{c_1} &+ z^{2c_1 -1} &+ z^{3c_1 - 1} &+ \cdots + &z^{(2k-3) c_1 - (k-2)} &+ z^{(2k-2) c_1 - (k-1)} + \\
        z^{c_1} &+ z^{2c_1 -1} &+ z^{3c_1 -1} &+ z^{4c_1 -2} &+ \cdots + &z^{(2k-2)c_1  - (k-1)} &+ z^{(2k-1)c_1  - (k-1)}.
      \end{matrix}	
    \]
    We consider two cases. If $c_1 = 1,$ then by Lemma \ref{lem:basic-geofac} part \eqref{itm:e1mu1}, we have $e_1=e_2 = e_3 =e_4 = 1.$ This implies that $[z^2] g_\br^\bx(z) \ge \binom{4}{2} = 6.$ However, one sees that the expression above contains at most $4$ copies $z^2,$ a contradiction. If $c_1 > 1,$ then by Lemma \ref{lem:basic-geofac} part \eqref{itm:e1mu1} again, we have $e_1=e_2= c_1.$ It then follows from Lemma \ref{lem:basic-geofac} part \eqref{itm:compcoeff} that $[z^{2c_1}] g_\br^\bx(z) \ge 1,$ contradicting with the fact that $z^{2c_1}$ does not appear in the expression above. Therefore, we must have that $c_1 \neq c_2 + 1.$

  \item 
    It is easy to verify the following: 
    \begin{itemize}
    \item when $\br=(2,3)$, $g_\br^\bx(z)$ has a geometric factorization $(1+z)(1+z+z^2)$,
    \item when $\br=(2,9),$ $g_\br^\bx(z)$ has a geometric factorization $(1+z+z^2)(1+z+z^2)(1+z^2)$,
    \item when $\br=(2,5)$ or $(2,7),$ $g_\br^\bx(z)$ does not have a geometric factorization.
    \end{itemize}

    Now assume $k \ge 6.$ (We will find a contradiction.) Then using~\eqref{equ:2-2k-1-g} we have 
    \[ 
      g_\br^\bx(z) = 1 + 2z+ 4z^2 + 4z^3 + 4z^4 + 4z^5 + c z^6 + z^7 f(z) \, ,
    \]
    where $f(z) \in \Z_{\ge 0}[z]$ and $c=2$ or $4.$
    It follows from Lemma~\ref{lem:basic-geofac} part~\eqref{itm:e1mu1} that $e_1 = e_2 = 1 \neq e_3.$ 

    It follows from part~\eqref{itm:gamma_prod} that one of $\gamma_1$ and $\gamma_2$ is not $2.$
    We next show that both $\gamma_1$ and $\gamma_2$ are not $2.$ Suppose one of them is $2.$ 
    Without loss of generality (due to $e_1 = e_2$), assume $\gamma_1 = 2.$ 
    Then $\gamma_2 \ge 3.$
    Thus,
    \[ \prod_{j=3}^p \sum_{i=0}^{\gamma_j-1} z^{i e_j} = g_\br^\bx(z)/((1+z)(1+z+z^2+ \dots + z^{\gamma_2-1})) = 1 + 2 z^2 + z^3 h(z),\]
    for some polynomial $h(z)$. 
    Thus, by Lemma \ref{lem:basic-geofac} part~\eqref{itm:e1mu1} again, we conclude that $e_3=e_4 = 2.$ However, 
    \[ [z^3] \left(\prod_{j =1}^4 \sum_{i=0}^{\gamma_j-1} z^{i e_j}\right) \ge [z^3] \left( (1+z)(1+z+z^2)(1+z^2)(1+z^2) \right) = 5 > 4 = [z^3] g_\br^\bx(z),\]
    contradicting Lemma \ref{lem:basic-geofac} part \eqref{itm:compcoeff}. Therefore, $\gamma_1 \ge 3.$

    Now given $\gamma_1 \ge 3$ and $\gamma_2 \ge 3,$ we can show $e_3 = 2$ using similar arguments as above. Then one checks that 
    \[ [z^4] \left(\prod_{j =1}^3 \sum_{i=0}^{\gamma_j-1} z^{i e_j}\right) \ge [z^4] \left((1+z+z^2)(1+z+z^2)(1+z^2)\right) =  4 = [z^4] g_\br^\bx(z),\]
    where the equality in ``$\ge$'' holds if and only if $(\gamma_1, \gamma_2, \gamma_3)=(3,3,2).$ Hence, by Lemma \ref{lem:basic-geofac} part \eqref{itm:compcoeff}, we must have $(\gamma_1, \gamma_2, \gamma_3)=(3,3,2).$
    Let $g_0(z) = \prod_{j=4}^p \sum_{i=0}^{\gamma_j-1} z^{i e_j} = g_\br^\bx(z)/((1+z)(1+z+z^2)(1+z+z^2)(1+z^2)).$ Then
    \[ g_\br^\bx(z) = (1+z+z^2)(1+z+z^2)(1+z^2) g_0(z).\]
    By comparing the coefficients of $z^5$ on both sides, we must have that $[z^5] g_0(z) = 2.$ But this implies that 
    \[ [z^6] \left( (1+z+z^2)(1+z+z^2)(1+z^2) g_0(z)\right) \ge 5 > 4\]
    contradicting with the assumption that $[z^6] g_\br^\bx(z) = 2$ or $4.$
    \qedhere
  \end{enumerate}
\end{proof}

\subsection{Proof of Theorem~\ref{thm:2odd}}
Note that Lemma \ref{lem:basic_properties_22k-1} part \eqref{itm:c1neqc2+1} provides the assertion that $c_1 \neq c_2 +1.$ In the proof of Lemma \ref{lem:basic_properties_22k-1} part \eqref{itm:11}, we showed that if $(\br, \bx) = ((2,9), (4,3))$, $g_\br^\bx(z)$ has a geometric factorization. This, together with, Theorems \ref{thm:case1}, \ref{thm:case2}, and \ref{thm:case3}, provides one direction for the if and only if condition in Theorem~\ref{thm:2odd}.
We providing separate proofs of the other direction for parts (1), (2a), and (2b) of Theorem~\ref{thm:2odd}.

\begin{proof}[Proof of Part (1) of Theorem~\ref{thm:2odd}]

  Since $c_2 + 1 < c_1,$ we have $c_1 \ge 2.$
  Express $g_\br^\bx(z)$ as
  \begin{equation}
    g_\br^\bx(z) = 1 + z^{\mu_1} + z^{\mu_2} + \cdots + z^{\mu_M} \quad \text{ with $0 < \mu_1 \le \mu_2 \le \cdots \le \mu_M$.}
    \label{equ:g_termbyterm}
  \end{equation}
  Then by \eqref{equ:2-2k-1-g}, $\mu_1 = c_2+1$ and $\mu_2 = c_1.$ 
Hence, by Lemma \ref{lem:basic-geofac} part \eqref{itm:e1mu1}, $e_1 = \mu_1 = c_2 +1.$
  \begin{enumerate}[(a)]
  \item Suppose $c_1 = 2(c_2 +1).$ Let $c = c_2 +1.$ 
    Then 
    \[ \mu_1 = c, \mu_2 = 2c, \mu_3= 3c - 1, \mu_4 = 4c-1, \mu_5 = 5c-1,\]
    and if $k \ge 3,$
    \[ \mu_6 = 6c-1, \mu_7=7c-2, \mu_8 = 8c-2, \mu_9 = 9c-2.\]
    Hence $z^{\mu_1 + \mu_2}=z^{3c}$ does not appear in $g_\br^\bx(z).$ 
Thus, it follows from part \eqref{itm:musum} of Lemma \ref{lem:basic-geofac} that $(1 + z^c + z^{2c})$ is a factor of given geometric factorization of $g_\br^\bx(z).$ 
Next, one sees that by Lemma \ref{lem:basic-geofac} part \eqref{itm:mtp_mu1} we must have that $e_2 = \mu_3 = 3c-1.$ Given $z^{2 (3c-1)}$ does not appear in $g_\br^\bx(z),$ we conclude that $\gamma_2 = 2.$ Hence,
    \[ (1+z^c + z^{2c})(1 + z^{3c-1}) = 1 + z^c + z^{2c} + z^{3c-1} + z^{4c-1} + z^{5c-1} \]
    appears in the geometric factorization of $g_\br^\bx(z).$ 
    If $k=2,$ i.e., $\br=(2,3),$ the above expression is exactly the geometric factorization of $g_\br^\bx(z).$ 
    If $k \ge 3,$ one can show that $e_3 = 6c-1$ which implies that $z^{c+6c-1} = z^{7c-1}$ appears in \eqref{equ:g_termbyterm}, a contradiction. 

  \item Suppose $c_1 \neq 2 (c_2 + 1),$ so $\mu_2 \neq 2 \mu_1.$ By Lemma \ref{lem:basic-geofac} parts~\eqref{itm:e2mu2} and~\eqref{itm:musum}, $e_2 = \mu_2 = c_1$ and $z^{\mu_1 + \mu_2} = z^{c_1 + c_2 + 1}$ must appear in \eqref{equ:g_termbyterm}.
    However, by looking at Expression \eqref{equ:2-2k-1-g}, we see that the only term that could be $z^{c_1+c_2 +1}$ is $z^{2c_1 -1}.$ 
Hence, $c_1 + c_2 +1 = 2c_1 -1,$ equivalently, $c_2 = c_1 - 2.$ Since $2 = 2 (0 + 1)$ and $c_1 \neq 2 (c_2 + 1),$ we conclude that $c_1 \ge 3.$ Let $c= c_1-1  \ge 2.$ Then
    \[ 
e_1 = \mu_1 = c, e_2 =\mu_2 = c+1, \mu_3 = 2c, \mu_4 = 2c+1, \mu_5 = 3c+1 \, .
\]
    Since $2c+1 < 2c+2 < 3c +1,$ the term $z^{2c+2}$ does not appear in Expression \eqref{equ:g_termbyterm} of $g_\br^\bx(z).$ 
    This implies that $\gamma_2 = 2,$ that is, $(1 + z^{c+1})$ is a factor in the geometric factorization \eqref{equ:g-geofac} of $g_\br^\bx(z).$ 
Then it follows from Lemma \ref{lem:basic_properties_22k-1} part \eqref{itm:gamma_prod} that $\gamma_1$ must be an odd number. 
In particular $\gamma_1 \ge 3.$ 
One sees that $z^{3c}$ does not appear in Expression \eqref{equ:g_termbyterm} of $g_\br^\bx(z)$. Hence, $\gamma_1 =3.$
    Therefore,
    \[ (1 + z^{c} + z^{2c}) (1 + z^{c+1}) = 1 + z^{c} + z^{c+1} + z^{2c} + z^{2c+1} + z^{3c+1} \]
    appears in the geometric factorization of $g_\br^\bx(z).$ Then similarly to part (a), we can show that $\br$ has to be $(2,3).$ \qedhere
  \end{enumerate}
\end{proof}

\begin{proof}[Proof of Part (2a) of Theorem~\ref{thm:2odd}]
  Express $g_\br^\bx(z)$ as \eqref{equ:g_termbyterm}.
  Since $c_1 < c_2 +1,$ one sees that $\mu_1 = c_1.$ 
Hence, by Lemma \ref{lem:basic-geofac} part \eqref{itm:e1mu1}, $e_1 = \mu_1 = c_1.$

  Suppose $c_2 + 1 = 2c_1.$ If $c_1 = 1,$ then $(c_1,c_2)=(1,1)$, and Lemma \ref{lem:basic_properties_22k-1} part~\eqref{itm:11} applies.
Hence, we only need to show that if $c_1 \ge 2$, then $\br=(2,3),$ or equivalently $k = 2.$ 
We prove by contradiction. 
Suppose $c_1 \ge 2$ and $k \ge 3.$
  Let $c= c_1 \ge 2.$ 
  Then 
  \[ 
\mu_1 = c, \mu_2 = 2c-1, \mu_3=2c, \mu_4=  \mu_5 = 3c-1, \mu_6 = 4c-2, \mu_7 = 4c-1,  \mu_8= 5c-2 \, .
\]
  It follows from Lemma \ref{lem:basic-geofac} part~\eqref{itm:e2mu2}, we have $e_2 = \mu_2=2c-1.$ Since $\mu_2 < 2c < \mu_4,$ the term $z^{2c}$ appears exactly once in $g_\br^\bx(z).$ Hence, if $e_3 = 2c,$ we must have that $\gamma_1 = 2$ because $(1+z^c+z^{2c})(1+z^{2c})$ has two copies of $z^{2c}.$ However, in this case
  \[ 
(1+z^c) \left( 1+z^{2c} + \dots + z^{2c(\gamma_3-1)} \right) = \sum_{i=0}^{2\gamma_3 - 1} z^{i c} \, ,
\]
  which is a geometric series with exponent $c$ and of length $2 \gamma_3.$ 
Therefore, we may assume $\gamma_1 \ge 3,$ and $e_3 \neq 2c.$ 
  Now notice that 
  \[ 
\prod_{i=1}^{\gamma_1-1} z^{i c} \prod_{i=1}^{\gamma_2-1} z^{i (2c-1)} = 1 + z^c + z^{2c-1} + z^{2c} + z^{3c-1} + z^{4c-1} + z^{3c} h(z) \, ,
\]
  for some polynomial $h(z),$ and we have previously seen that $[z^{3c-1}]g_\br^\bx(z) = 2.$ 
Thus, we must have that $e_3 = 3c-1.$ 
However, this implies that $z^{4c-1}$ appears in Expression \eqref{equ:g_termbyterm} of $g_\br^\bx(z)$ at least twice as $z^{c} \cdot z^{3c-1}$ and $z^{2c} \cdot z^{2c-1},$ contradicting with the observation that $z^{4c-1}$ only appears once.  
\end{proof}

The proof of (2b) of Theorem~\ref{thm:2odd} is more complated than the other parts, requiring the following lemma.

\begin{lemma}\label{lem:2b}
  Assume that Setup \ref{setup3} holds and $g_\br^\bx(z)$ admits a geometric factorization.
  Suppose further that $c_1 < c_2 +1$ and $2c_1 \neq c_2 + 1$.
  Then $e_1 = c_1.$ Furthermore, $(1 + z^{c_1})$ is a factor in the geometric factorization \eqref{equ:g-geofac} of $g_\br^\bx(z).$ Thus, we may assume $\gamma_1=2.$
\end{lemma}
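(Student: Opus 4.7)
The plan has two parts. First, I will establish $e_1 = c_1$ by inspecting the explicit exponent list in \eqref{equ:2-2k-1-g}: the two smallest positive candidates are $c_1$ (from $i_1 = 1, i_2 = 0$) and $c_2 + 1$ (from $i_1 = 0, i_2 = 1$), and since $c_1 < c_2 + 1$ we have $\mu_1 = c_1$; applying Lemma \ref{lem:basic-geofac} part \eqref{itm:e1mu1} immediately gives $e_1 = c_1$.

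The substantive step is showing $\gamma_1 = 2$. I will argue by contradiction: assume $\gamma_1 \geq 3$, so that the first factor contributes $z^{2c_1}$ and $[z^{2c_1}] g_\br^\bx(z) \geq 1$. A direct case check of \eqref{equ:2-2k-1-g} shows that $z^{2c_1}$ occurs in $g_\br^\bx(z)$ only when (a) $c_1 = 1$ (via $i_1 \in \{3,4\}$, $i_2 = 0$), (b) $c_2 + 1 = 2c_1$ (excluded by hypothesis), or (c) $c_2 = c_1$ (via $i_1 = 1$, $i_2 = 1$). This reduces the proof to three sub-cases.

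\textbf{Sub-case A ($c_1 \geq 2$, $c_2 > c_1$):} Here $[z^{2c_1}] g = 0$, directly contradicting $\gamma_1 \geq 3$. \textbf{Sub-case B ($c_1 = c_2 = c \geq 2$):} For $c \geq 3$, the next smallest exponent is $\mu_2 = c+1 \neq 2\mu_1$, so Lemma \ref{lem:basic-geofac} part \eqref{itm:e2mu2} gives $e_2 = c+1$; the cross-product $z^c \cdot z^{c+1}$ in the factorization forces $[z^{2c+1}] g \geq 1$, whereas direct computation from \eqref{equ:2-2k-1-g} gives $[z^{2c+1}] g = 0$, a contradiction. For $c = 2$, the coincidence $c+1 = 2c-1 = 3$ yields $\mu_2 = \mu_3 = 3$ and $[z^3] g = 2$, forcing $e_2 = e_3 = 3$; then the two cross-products $z^{c_1} \cdot z^{e_2} \cdot 1$ and $z^{c_1} \cdot 1 \cdot z^{e_3}$ produce $[z^5](F_1 F_2 F_3) \geq 2$, while computation yields $[z^5] g \leq 1$ (which holds for both $k=2$ and $k \geq 3$), a contradiction.

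\textbf{Sub-case C ($c_1 = 1$):} Here $c_2 \geq 2$ (forced by the hypotheses) and $[z^1] g = 2$, so Lemma \ref{lem:basic-geofac} part \eqref{itm:e1mu1} gives $e_1 = e_2 = 1$. If both $\gamma_1 \geq 3$ and $\gamma_2 \geq 3$, then $[z^2](F_1 F_2) = [z^2] F_1 + [z^1] F_1 \cdot [z^1] F_2 + [z^2] F_2 = 3$, so $[z^2] g \geq 3$; but inspection of \eqref{equ:2-2k-1-g} gives $[z^2] g \leq 2$, a contradiction. Hence at least one of $\gamma_1, \gamma_2$ equals $2$, and since $e_1 = e_2 = 1$, I may reorder the factors to arrange $\gamma_1 = 2$. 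The main obstacle will be the detailed coefficient bookkeeping in Sub-case B with $c = 2$, where small-parameter coincidences among the $\mu_j$'s force multiple exponents $e_j$ to coincide and one must select the correct monomial ($z^5$) whose coefficient in $g$ is too small to accommodate the contributions forced by $\gamma_1 \geq 3$.
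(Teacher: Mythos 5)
Your proposal is correct and follows essentially the same strategy as the paper's proof: establish $e_1 = c_1$ via Lemma \ref{lem:basic-geofac} part (i), then argue by contradiction that if $(1+z^{c_1})$ is not a factor then $z^{2c_1}$ must appear in $g_\br^\bx(z)$, which by direct inspection of \eqref{equ:2-2k-1-g} forces $c_2 = c_1$, followed by coefficient comparisons ($[z^{2c+1}]$ for $c \geq 3$, $[z^5]$ for $c=2$, $[z^2]$ for $c_1=1$) leading to contradictions. The paper organizes the same content into just two cases ($c_1 = 1$ and $c_1 \geq 2$), folding your Sub-case A into Sub-case B via the observation that the only exponent in \eqref{equ:2-2k-1-g} that could equal $2c_1$ is $c_1 + c_2$.

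Two small points worth tightening in your write-up. First, in Sub-case C the argument is not really a contradiction to $\gamma_1 \ge 3$: you show one of $\gamma_1, \gamma_2$ is $2$ and then reorder, which is a direct proof that $(1+z^{c_1})$ is a factor rather than a refutation of the contrary; the framing should be adjusted so the reader doesn't look for a contradiction that never arrives. Second, in Sub-case B with $c = 2$, the inference that $e_2 = e_3 = 3$ from $\mu_2 = \mu_3 = 3$ and $[z^3]g = 2$ deserves a sentence of justification — the paper does this cleanly by first dividing $g$ by the factor $\sum_{i=0}^{\gamma_1-1} z^{2i}$ (which is known to appear) and applying Lemma \ref{lem:basic-geofac} part (i) to the quotient, whose constant-plus-lowest-term structure is $1 + 2z^3 + \cdots$; simply citing the multiplicity of $\mu_2$ in $g$ itself does not immediately give two exponents equal to $3$, since $z^3$ could a priori arise from a single factor with exponent $3$ combined with something else (though here $e_1 = 2$ rules that out, you should say so).
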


\begin{proof}
  Since $c_1 < c_2 + 1$ and $2c_1 \neq c_2 +1,$ it is clear that $e_1 = c_1$ and $c_2 \ge 2.$ Assume the contrary that $(1+z^{c_1})$ is not a factor in the geometric factorization \eqref{equ:g-geofac} of $g_\br^\bx(z).$ We consider two cases.

  Suppose $c_1 = 1.$ Then by Lemma \ref{lem:basic-geofac} part~\eqref{itm:e1mu1}, we have $e_1=e_2 = c_1 = 1.$ Since $(1+z)$ is not a factor in the geometric factorization, we have $\gamma_1 \ge 3$ and $\gamma_2 \ge 3.$ 
It follow from Lemma \ref{lem:basic-geofac} part~\eqref{itm:compcoeff} that
  \[ [z^2] g_\br^\bx(z) \ge [z^2]\left( \prod_{j=1}^2 \sum_{i=0}^{\gamma_j-1} z^{i} \right) \ge  [z^2] ((1+z+z^2)(1+z+z^2)) = 3.\]
  However, since $c_2 + 1 \ge 3,$ one sees that there are at most $2$ copies of $z^2$ in Expression \eqref{equ:2-2k-1-g} of $g_\br^\bx(z),$ which is a contradiction.

  Suppose $c_1 \ge 2.$ 
By assumption, we have $\gamma_1 \ge 3.$ 
It then follows that $z^{2c_1}$ appears at least once in $g_\br^\bx(z).$ 
However, the only term in the Expression \eqref{equ:2-2k-1-g} that could be $z^{2c_1}$ is $z^{c_1+c_2}.$ 
Thus, $2c_1 = c_1 + c_2,$ or equivalently, $c_2 = c_1.$
  Then one sees that $c_1+1=c_2+1$ is the second lowest positive order in \eqref{equ:2-2k-1-g}. 
Thus, by Lemma \ref{lem:basic-geofac} part \eqref{itm:e2mu2}, we have $e_2 = c_1 + 1.$ 
It follows that $z^{e_1+e_2} = z^{2c_1+1}$ has to appear in $g_\br^\bx(z).$ However, the only term that could be $z^{2c_1+1}$ is $z^{3c_1-1},$ which implies that $c_1 = 2.$ (So $e_1 = c_2 = c_1 = 2.$)
  Then \eqref{equ:2-2k-1-g} becomes 
  \[ g_\br^\bx(z) =
    \begin{matrix} z^0 &+ z^{2} &+ z^{3} &+ z^{5} &+ \cdots + &z^{3k-4} &+ z^{3k-3} + \\
      z^{3} &+ z^{4} &+ z^{6} &+ z^{7} &+ \cdots + &z^{3k-2} &+ z^{3k}.
    \end{matrix}
  \] 
  Applying Part \eqref{itm:e1mu1} of Lemma \ref{lem:basic-geofac} to $g_\br^\bx(z)/\left( \sum_{i=0}^{\gamma_1-1} z^{2i} \right),$ we obtain that $e_2 = e_3 = 3.$ It then follows from Lemma \ref{lem:basic-geofac} part~\eqref{itm:compcoeff} that
  \[ [z^5] g_\br^\bx(z) \ge [z^5]\left( \prod_{j=1}^3 \sum_{i=0}^{\gamma_j-1} z^{i} \right) \ge [z^5] ((1+z^2)(1+z^3)(1+z^3) = 2,\]
  contradicting the fact that there is as most one copy of $z^5$ in $g_\br^\bx(z).$
\end{proof}

\begin{proof}[Proof of Part (2b) of Theorem~\ref{thm:2odd}]
  By Lemma \ref{lem:2b}, we may assume $e_1=c_1$ and $\gamma_1 =2.$ Let $g(z) = g_\br^\bx(z)/(1+z^{c_1}).$ Then $g(z)$ has a geometric factorization
  \begin{equation} \label{equ:g(z)-geofac}
    g(z) = \prod_{j=2}^p \sum_{i=0}^{\gamma_j-1} z^{i e_j} = \prod_{j=1}^p \left( 1 + z^{e_j} + z^{2 e_j} + \cdots z^{(\gamma_j-1) e_j} \right).
  \end{equation}
  Thus, $g(z) \in \Z_{\ge0}[z].$
  Dividing \eqref{equ:2-2k-1-g} by $(1 + z^{c_1})$ gives
  \begin{align}
    g(z)=& \ \ \ \ 1+ z^{2c_1-1} + z^{2(2c_1-1)} + \dots + z^{(k-2)(2c_1-1)} \label{equ:gz1} \\ 
         & + z^{c_1+c_2}\left( 1+ z^{2c_1-1} + z^{2(2c_1-1)} + \dots + z^{(k-2)(2c_1-1)}\right) \label{equ:gz2} \\
         & + \frac{z^{(k-1)(2c_1-1)} + z^{c_2+1}}{z^{c_1} + 1}.  \label{equ:gz3}
  \end{align}
  Since $z^{c_1}+1$ is a factor of $z^a + z^b$ if and only if $a-b$ is an odd multiple of $c_1,$ we have that 
  \[ c_2 + 1 = (k-1)(2c_1- 1) + (2m+1) c_1, \quad \text{for some integer $m$.}\]
  If $m=0,$ then we recover the situations given by Theorem \ref{thm:case1} with $a=2.$ Therefore, it is left to show that it is impossible to have $m \neq 0,$ which we prove by contradiction.

  Suppose $m > 0.$ Then the part \eqref{equ:gz3} of $g(z)$ becomes
  \[ z^{(k-1)(2c_1-1)}\left( 1-z^{c_1} + z^{2c_1} - \dots - z^{(2m-1)c_1} + z^{2mc_1} \right).\]
  As $m > 0,$ we see that the summand $- z^{(k-1)(2c_1-1) + c_1}$ with a negative coefficient appears in the above expression. 
Since $g(z)$ has non-negative coefficients, at least one summand in either \eqref{equ:gz1} or \eqref{equ:gz2} should have power $(k-1)(2c_1-1) + c_1.$ 
However, every exponent in \eqref{equ:gz1} is less than $(k-1)(2c_1-1)$ and every exponent appearing in \eqref{equ:gz2} is no less than $c_1 + c_2.$ 
However, we have
  \[ (k-1)(2c_1-1) < (k-1)(2c_1-1) + c_1 \le (c_2+1) - 3c_1 + c_1 < c_1 + c_2,\]
  a contradiction.

  Suppose $m < 0.$ For convenience, let $m' = -(m+1) \ge 0.$ Then $2m+1 = 2(m+1) -1 = -(2m'+1),$ and thus the part \eqref{equ:gz3} of $g(z)$ becomes
  \begin{equation}\label{equ:gz3'} z^{c_2+1}\left( 1-z^{c_1} + z^{2c_1} - \dots - z^{(2m'-1)c_1} + z^{2m'c_1} \right).
  \end{equation}
  We consider two cases.

  Suppose $c_2 + 1$ is not a multiple of $2c_1 -1.$ Note that this implies that $c_1 > 1.$ 
One can show, using Lemma \ref{lem:basic-geofac} part \eqref{itm:e1mu1}, that $e_2,$ the smallest exponent in the geometric factorization \eqref{equ:g(z)-geofac} of $g(z)$, is $\min(2c_1-1, c_2 + 1).$ 
Then it follows from Lemma \ref{lem:basic-geofac} part \eqref{itm:mtp_mu1} that $\max(2c_1-1,c_2+1) = e_j$ for some $j \ge 3.$ 
Thus, $z^{(2c_1-1)+(c_2+1)} = z^{2c_1+c_2}$ has to be a term appearing in $g(z).$ 
However, $z^{2c_1+c_2}$ is neither a term in \eqref{equ:gz1} since $c_2 + 1$ is not a multiple of $2c_1 -1,$ nor a term in \eqref{equ:gz3'} as $c_2 + 1 < 2c_1 + c_2 < 2c_1 + c_2 + 1.$ 
Hence, it must appear in \eqref{equ:gz2}. 
Thus, $2c_1 + c_2 = c_1 + c_2 + n(2c_1-1)$ for some non-negative integer $n.$ Then $c_1 = n(2c_1-1).$
Since $c_1 > 1,$ we deduce that $n=0$ and then $c_1 = 0,$ which is a contradiction.

  Suppose $c_2 +1$ is a multiple of $2c_1 -1.$ 
We first show that $c_1$ has to be $1.$ 
If $m' > 0,$ then the summand $-z^{c_1 + c_2 +1}$ with a negative coefficient appearing in \eqref{equ:gz3'}. 
Similarly to our prior argument, at least one summand in either \eqref{equ:gz1} or \eqref{equ:gz2} should have power $c_1 + c_2 +1.$ 
The only possible term in \eqref{equ:gz2} that could have the desired power is $z^{c_1 + c_2 + (2c_1-1)},$ which would imply $c_1 = 1.$ 
If a term in \eqref{equ:gz1} has the desired power, then we get that $c_1 + c_2 + 1$ is a multiple of $2c_1-1$ as well, which implies that $c_1 = (c_1+c_2+1)-(c_2+1)$ is a multiple of $2c_1 -1.$ 
It then follows that $c_1 = 1.$ 
Now we assume $m' = 0.$ 
Then $2m+1=-(2m'+1)=-1,$ and we have $c_2 + 1 = (k-1)(2c_1-1) -c_1.$ 
Thus, $c_1 + c_2 + 1 = (k-1)(2c_1-1)$ is a multiple of $2c_1-1$ again. 
Then similar to above, we have $c_1=1.$
Therefore, in all cases, we have shown that $c_1 = 1.$ 
 Plugging $c_1 =1$ into the expressions we have for $g(z),$ we can show (in two cases $m' > 0$ and $m' =0$) that
  \[ [z^{c_2}] g =1, \quad [z^{c_2+1}] g = 3, \quad [z^{c_2+2}] g = 1.\]  
  Noting that $1+1 < 3$ and observing that $1$ has to be an exponent in any factorization of $g(z),$ we find a contradiction to Lemma \ref{lem:basic-geofac} part \eqref{itm:sumcoeff}. 
This completes our proof.
\end{proof}



\section{Conjectures  and Questions}\label{sec:questions}

In this concluding section, we present a variety of conjectures and questions based on experimental evidence.


\subsection{Classifying Kronecker $h^*$-Polynomials When $\br=(a,ka-1)$}

In an exhaustive search of all $\bq$ supported on $\br=(r_1,r_2)$ with R-multiplicity $\bx=(x_1,x_2)$ where $1\leq r_i\leq 40$ and $1\leq x_i\leq 100$, the only $\bq=(\br,\bx)$ corresponding to Kronecker $h^*(\Pq;z)$ that are not covered by our results in Section~\ref{sec:twointegers} are given in Table~\ref{tab:exceptions}.
Based on these experiments, we offer the following conjecture and question.

\begin{conjecture}\label{conj:classify}
  For the family of $\bq$-vectors supported on two integers:
  \begin{enumerate}
  \item Section~\ref{sec:twointegers} describes all of the $\bq$-vectors supported on $\br$ of the form $(a,ka-1)$ or $(a-1,a)$ such that $h^*(\Pq;z)$ factors as a product of geometric series in powers of $z$, with the exception of the twelve $(\br,\bx)$-pairs of this form listed in Table~\ref{tab:exceptions}.
  \item For each vector $\br=(r_1,r_2)$ that is not of the form $(a,ka-1)$, there are only finitely many $\bx$ such that $\bq=(\br,\bx)$ has a Kronecker $h^*(\Pq;z)$.
  \end{enumerate}
\end{conjecture}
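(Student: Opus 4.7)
The plan is to mirror the strategy of Theorem~\ref{thm:2odd} for part~(1), while developing a genuinely new approach for part~(2). For part~(1), I would extend the template of Section~\ref{sec:twoodd} to general $\br=(a,ka-1)$ with $a \ge 2$. By Lemma~\ref{lem:g2supp-ka-1}, $g_\br^\bx(z)$ decomposes into $a$ blocks indexed by $i_2 \in \res{a}$, each block being an almost-arithmetic progression in powers of $z$ with common difference $c_1$, shifted by $c_2 i_2$ and corrected by $-\lfloor(i_1-i_2)/a\rfloor$. First I would pin down the two smallest exponents of $g_\br^\bx(z)$ via Lemma~\ref{lem:basic-geofac}(i)-(iv), which determines $e_1, e_2$ and forces the low-order cyclotomic factors. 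Then, splitting on the cases $c_1 < c_2+1$, $c_1 = c_2+1$, $c_1 > c_2+1$ (generalizing Lemma~\ref{lem:basic_properties_22k-1}(ii)), I would argue that any admissible geometric factorization must match one of the families of Theorems~\ref{thm:case1}--\ref{thm:case3}, together with the base case of Theorem~\ref{thm:case0}. The twelve exceptions in Table~\ref{tab:exceptions} would be verified computationally after the structural analysis reduces the problem to a finite check in small parameters.

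For part~(2), the strategy is fundamentally different, since one must bound all $\bx$ for a fixed $\br=(r_1,r_2)$ not of the form $(a,ka-1)$. The key structural difference to exploit is the desirable $\bs$-remainder: when $\br=(a,ka-1)$, the solution $\brho=(-k,1)$ has $|\rho_2|=1$, so the exponent contribution $-\rho_2 \omega_2(\bi)$ in Theorem~\ref{thm:gxrthm} is just $-\omega_2(\bi)$, producing the clean block decomposition exploited in Lemma~\ref{lem:g2supp-ka-1}. For generic $\br$, we instead have $\min(|\rho_1|,|\rho_2|) \ge 2$, so that both $c_1 i_1 - \rho_1 \omega_1(\bi)$ and $c_2 i_2 - \rho_2 \omega_2(\bi)$ produce intricate interleavings in the exponents. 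The plan is to show that this generic interleaving forces either the coefficient vector of $g_\br^\bx(z)$ or its support to grow in complexity with $\bx$, while Theorem~\ref{thm:Kronecker} constrains any Kronecker polynomial to be a product of cyclotomics; hence for all sufficiently large $\bx$ the polynomial cannot be Kronecker. One concrete line of attack is to evaluate $g_\br^\bx(z)$ at fixed primitive roots of unity and to show that, outside of a finite window of $\bx$, at least one such evaluation is nonzero while its modulus grows, contradicting the vanishing required of any cyclotomic factor of matching order; another is to compare $[z^1]g_\br^\bx(z)$ and analogous small-order coefficients against what cyclotomic products of the requisite degree permit.

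The main obstacle will be part~(2). Part~(1) reduces to a bounded case analysis because Section~\ref{sec:twointegers} already supplies the candidate families to match against; by contrast, part~(2) is a non-existence statement over an infinite family and requires a genuinely new quantitative argument. The delicate point is that Kronecker polynomials of degree $N$ form a finite but rapidly growing set, while $\deg g_\br^\bx(z)$ also grows with $\bx$, so a naive counting argument is insufficient. The crux will be to identify a uniform arithmetic invariant of $g_\br^\bx(z)$ (for example, a specific small-order coefficient, or the value of $g_\br^\bx$ at a carefully chosen root of unity) that depends on $\bx$ in a controlled way and that provably exceeds or conflicts with what any cyclotomic factorization of the matching degree can realize, once $\bx$ lies outside an explicit finite exceptional set determined by $\br$.
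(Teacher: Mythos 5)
This statement is Conjecture~\ref{conj:classify}, which the paper poses as an open problem based on experimental evidence and does not prove; the authors explicitly note in Section~\ref{sec:questions} that, while extending the technique of Theorem~\ref{thm:2odd} ``seems feasible,'' it ``has proven a challenge to find a universal way to handle all $\br$-vectors.'' So there is no paper proof to compare your proposal against, and the proposal itself is a plan of attack rather than a proof.

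There are concrete gaps in the plan. For part~(1), extending the $a=2$ argument to general $a$ is not a matter of bookkeeping: in Theorem~\ref{thm:2odd} there are only two residue classes $i_2\in\res{2}$, so the expansion \eqref{equ:2-2k-1-g} has two interleaved arithmetic-like strands and the low-order exponents $\mu_1,\mu_2,\mu_3$ are tractable; for $a\ge 3$ you get $a$ strands, and the argument must also vary over $k$ simultaneously, so Lemma~\ref{lem:basic-geofac}(i)--(iv) no longer determines $e_1,e_2$ after one or two steps. Your plan also does not explain how the structural analysis ``reduces the problem to a finite check in small parameters'': to isolate the twelve exceptional $(\br,\bx)$ in Table~\ref{tab:exceptions} you must produce explicit bounds on $a$, $k$, $c_1$, $c_2$ outside of which no new geometric factorization can occur, and nothing in the proposal supplies such a bound. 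For part~(2), the two candidate invariants you name are only sketched: evaluating $g_\br^\bx$ at a fixed primitive root of unity does not directly obstruct Kroneckerness because the degree of $g_\br^\bx$ --- and hence the set of cyclotomic factors available --- changes with $\bx$, so nonvanishing at a given root of unity is not by itself a contradiction; and a bound on $[z^1]g_\br^\bx(z)$ or other low-order coefficients against ``what cyclotomic products of the requisite degree permit'' would require a uniform quantitative estimate you have not established. Your observation that $\min(|\rho_1|,|\rho_2|)\ge 2$ for $\br$ not of the form $(a,ka-1)$ is a genuinely good structural starting point, and is likely the right place to dig, but as written the argument for part~(2) identifies the difficulty without overcoming it.
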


\begin{question}
  Is it true that when $\bq=(\br,\bx)$ is supported on two integers, $g_{\br}^{\bx}(z)$ is a geometric series in powers of $z$ if and only if $\br=(1,a)$ or $(a,1)$?
\end{question}

\begin{table}
  \centering
  \begin{tabular}{|l| l|} 
    \hline
    $\br$ & $\bx$ \\ [0.5ex] 
    \hline
    $(3, 7)$   & $(9, 14)$\\
    $(3, 10)$  & $(3, 5)$\\
    $(5, 7)$   & $(25, 7)$\\
    $(5, 8)$   & $(35, 13)$\\
    $(5,13)$   & $(5,13)$\\
    $(5, 17)$  & $(10, 17)$\\
    $(5, 18)$  & $(25, 18)$\\
    $(7, 9)$   & $(14, 3)$\\
    $(7, 11)$  & $(14, 33)$\\
    $(7, 33)$  & $(14, 11)$\\
    $(10, 17)$ & $(5, 17)$\\
    $(11, 14)$ & $(33, 7)$\\
    $(11, 26)$ & $(33, 52)$\\
    $(13, 18)$ & $(65, 18)$\\
    $(13, 34)$ & $(13, 34)$\\
    $(17, 29)$ & $(17, 58)$\\
    $(26, 33)$ & $(52, 11)$ \\ [1ex]
    \hline
  \end{tabular}
  \quad
  \begin{tabular}{|l| l|} 
    \hline
    $\br$ & $\bx$ \\ [0.5ex] 
    \hline
    $(2, 5)$ & $(7, 5)$ \\
    $(2, 7)$ & $(10, 7)$ \\
    $(2, 9)$ & $(4, 3)$ \\
    $(3, 4)$ & $(9, 11)$ \\
    $(3, 5)$ & $(13, 10)$ \\
    $(3, 8)$ & $(5, 4)$ \\
    $(3, 8)$ & $(21, 13)$\\
    $(3, 14)$& $(9, 7)$ \\
    $(4, 5)$ & $(6, 7)$ \\
    $(4, 5)$ & $(11, 15)$ \\
    $(5, 6)$ & $(7, 9)$ \\
    $(5, 9)$ & $(7, 6)$ \\
          &  \\
          & \\
          &  \\ [1ex]
    \hline
  \end{tabular}
  \caption{Pairs $\br$ and $\bx$ where $\bq=(\br,\bx)$ has Kronecker $h^*$-polynomial, but $\bq$ is not covered by a theorem in Section~\ref{sec:twointegers}. These are aggregated by whether or not $\br$ is of one of the forms $(a,ka-1)$ or $(a-1,a)$.}
  \label{tab:exceptions}
\end{table}


\subsection{Do Geometric Factorizations Classify Most Kronecker $h^*$-Polynomials?}

The $\bq$-vector given by $(\br,\bx)=((5,7), (25,7))$ has a Kronecker $h^*$-polynomial that does not factor into geometric series in powers of $z$, but it is the only known $\bq$-vector with this property.
Given Theorem~\ref{thm:2odd} and this experimental evidence, we make the following conjecture.

\begin{conjecture}\label{conj:kronfactor}
  For all but finitely many $\bq$-vectors supported on two integers, the polynomial $h^*(\Pq;z)$ is Kronecker if and only if it factors as a product of geometric series in powers of $z$.
\end{conjecture}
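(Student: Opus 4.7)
The first step is to dispose of the easy direction: every geometric series $\sum_{i=0}^{\gamma-1}z^{ie}=(z^{\gamma e}-1)/(z^e-1)$ is a product of cyclotomic polynomials, so any polynomial admitting a geometric factorization is automatically Kronecker. For the converse, note that by Theorem~\ref{thm:ellgeomfactor}, $h^*(\Pq;z)=(1+z+\cdots+z^{\ell-1})\,g_\br^\bx(z)$, so if $g_\br^\bx(z)$ has a geometric factorization then so does $h^*(\Pq;z)$. Thus it suffices to prove the sharper statement that, among all $(\br,\bx)$ with $\br=(r_1,r_2)$, $\gcd(r_1,r_2)=1$, and $\bx$ an R-multiplicity, all but finitely many pairs for which $g_\br^\bx(z)$ is Kronecker also admit a geometric factorization of $g_\br^\bx(z)$.

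The plan is to split this task into two cases paralleling Conjecture~\ref{conj:classify}. In the first case, $\br$ has the form $(a,ka-1)$ or $(a-1,a)$, and the goal is to extend Theorem~\ref{thm:2odd} from $\br=(2,2k-1)$ to all such $\br$ by the same coefficient-bookkeeping strategy. Using the explicit exponent description in Lemma~\ref{lem:g2supp-ka-1} and the low-coefficient identifications in Lemma~\ref{lem:basic-geofac}, I would read off the smallest exponents $e_1,e_2,\dots$ of any candidate geometric factorization and argue inductively that the forced factors exhaust $g_\br^\bx(z)$ only when $(\br,\bx)$ lies in one of the four families of Section~\ref{sec:twointegers} or in a finite list of sporadic pairs analogous to $((2,9),(4,3))$ from Lemma~\ref{lem:basic_properties_22k-1}\eqref{itm:11}. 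In the second case, $\br$ has no such structure, and one wants to show that, taken over all such $\br$ and all $\bx$, only finitely many pairs produce a Kronecker $g_\br^\bx(z)$ at all.

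To attack the second case, I would combine Theorem~\ref{thm:gxrthm} with Proposition~\ref{prop:formula_for_alpha_omega} to write the exponents of the defining sum of $g_\br^\bx(z)$ as affine functions of $(c_1,c_2)$ with slopes and intercepts determined by $\br$ alone. For fixed $\br$, as $(c_1,c_2)$ grows, the support of $g_\br^\bx(z)$ stays bounded by $|I(\br)|=r_1 r_2$ while its degree grows linearly. For such an increasingly sparse polynomial to remain a product of cyclotomic polynomials, strong arithmetic relations among its exponents must persist, and the plan would be to show that these relations cannot be maintained beyond a bounded region in $(c_1,c_2)$ unless $\br$ belongs to one of the distinguished families. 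A complementary uniform-in-$\br$ argument would then pin down the $\br$ themselves, presumably by showing that once $\min(r_1,r_2)$ is sufficiently large and $\br$ is not of the form $(a,ka-1)$ or $(a-1,a)$, coefficient inequalities forced by Lemma~\ref{lem:basic-geofac}\eqref{itm:sumcoeff} rule out the Kronecker property.

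The main obstacle is this uniform-in-$\br$ step: extracting a quantitative bound on $r_1+r_2$ beyond which Kronecker $g_\br^\bx(z)$ cannot occur for $\br$ outside the distinguished families. The difficulty is that exponent collisions in the defining sum of $g_\br^\bx(z)$ can create unexpected cancellations that mimic cyclotomic factors, so any proof will probably require a careful analysis of when the map $\bi\mapsto u(\alpha(\bi))$ is injective, together with delicate estimates on the resulting coefficients. By contrast, the extension of Theorem~\ref{thm:2odd} in the first case is expected to be lengthy but essentially mechanical, and completing it alone would establish Conjecture~\ref{conj:classify}(1) and close a substantial portion of Conjecture~\ref{conj:kronfactor}.
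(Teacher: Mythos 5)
This is Conjecture~\ref{conj:kronfactor}, not a theorem: the paper does not prove it, and in fact explicitly flags the hard direction as open, noting that ``it has proven a challenge to find a universal way to handle all $\br$-vectors, either simultaneously or partitioned as a reasonable collection of sub-families.'' Your proposal is, essentially, a paraphrase of that same plan of attack --- extend the $\br=(2,2k-1)$ bookkeeping of Theorem~\ref{thm:2odd} via Lemmas~\ref{lem:g2supp-ka-1} and~\ref{lem:basic-geofac} to all $(a,ka-1)$, and separately cap the non-structured $\br$'s --- and you candidly identify the uniform-in-$\br$ estimate as the missing ingredient. You have the easy direction right and the overall architecture is sensible, but nothing new is proven here, so this cannot be accepted as a proof of the conjecture.

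There is also a specific technical gap in your reduction. You propose to replace $h^*(\Pq;z)$ by $g_\br^\bx(z)$ and prove the sharper statement that, for all but finitely many $(\br,\bx)$, $g_\br^\bx(z)$ Kronecker implies $g_\br^\bx(z)$ admits a geometric factorization. It is true that this would imply Conjecture~\ref{conj:kronfactor}, since $h^*$ and $g$ are Kronecker simultaneously by Corollary 3.4 and a geometric factorization of $g$ lifts to one of $h^*$ by Theorem~\ref{thm:ellgeomfactor}. But the sharper statement may well be false. Remark~\ref{remark:h*} exhibits $\br=(2,5)$, $\bx=(7,5)$ where $h^*(\Pq;z)$ has a geometric factorization (hence is Kronecker, hence $g_\br^\bx(z)$ is Kronecker) yet $g_\br^\bx(z)$ does not factor; and the paper's final section leaves explicitly open whether such examples occur infinitely often. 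If they do, your target statement about $g$ fails while the conjecture about $h^*$ could still hold, so your ``it suffices'' narrows the goal in a way that is not without loss. Any serious attempt must either argue directly at the level of $h^*(\Pq;z)$, or first settle that the $g$-versus-$h^*$ discrepancy is a finite phenomenon, which is itself an open question in the paper.
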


It seems feasible that the proof technique for Theorem~\ref{thm:2odd} might be extended to handle this general setting.
However, it has proven a challenge to find a universal way to handle all $\br$-vectors, either simultaneously or partitioned as a reasonable collection of sub-families.


\subsection{A Fibonacci Phenomenon}

The appearance of $((5,13),(5,13))$ and $((13,34),(13,34))$ in Table~\ref{tab:exceptions} suggests a more general phenomenon involving Fibonacci numbers.
Let $a_0=1$, $a_1=2$, and define $a_n=3a_{n-1}-a_{n-2}$.
Thus, the values $a_n$ correspond to ``every other'' Fibonacci number.
The following conjecture has been verified for $n\leq 7$.

\begin{conjecture}\label{conj:fib}
  Let $\bq$ be defined by $\br=\bx=(a_{n+1},a_{n})$.
  Then
\[
g_{(a_{n+1},a_{n})}^{(a_{n+1},a_{n})}(z)=\left( \sum_{i=0}^{a_n-1}z^i\right)\left( \sum_{i=0}^{a_{n+1}-1}z^i\right) \, .
\]
\end{conjecture}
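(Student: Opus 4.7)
The plan is to place the claim inside Setup~\ref{setup2}, reduce $g_\br^\bx(z)$ to an explicit sum via Theorem~\ref{thm:gxrthm} and Proposition~\ref{prop:formula_for_alpha_omega}, and then match coefficients with the claimed product. First, by induction on $n$ using $a_{n+1} = 3 a_n - a_{n-1}$, I would establish the background facts $\gcd(a_n, a_{n+1}) = 1$ and the Cassini-type identity $a_{n-1} a_{n+1} - a_n^2 = 1$, which together imply $1 + a_n^2 + a_{n+1}^2 = 3 a_n a_{n+1}$. The last equation certifies that $\bx = (a_{n+1}, a_n)$ is an R-multiplicity of $\br = (a_{n+1}, a_n)$ and pins down $\ell = 3$. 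With $s_1 = a_n$ and $s_2 = a_{n+1}$, the Cassini identity furthermore shows that $\rho_1 = -a_{n-1} \in [-s_1]$ and $\rho_2 = a_n \in \res{s_2}$ form the (unique) desirable $\bs$-remainder of $\bx$, with associated quotients $c_1 = 3$ and $c_2 = 0$.

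Next, apply Theorem~\ref{thm:gxrthm} to write $g_\br^\bx(z) = \sum_{\bi \in \res{a_n} \times \res{a_{n+1}}} z^{e(\bi)}$ with $e(\bi) = 3 i_1 + a_{n-1}\omega_1(\bi) - a_n \omega_2(\bi)$. Using Proposition~\ref{prop:formula_for_alpha_omega} together with the Cassini identity, a short algebraic computation collapses the $\omega$-terms into
\[
a_{n-1}\omega_1(\bi) - a_n \omega_2(\bi) = -\left\lfloor \frac{a_n(i_1 - i_2)}{a_{n+1}} \right\rfloor,
\]
so that $e(\bi) = 3 i_1 - \lfloor a_n(i_1 - i_2)/a_{n+1} \rfloor = \lceil (i_1 a_{n+1} + i_2 a_{n-1})/a_n \rceil$.

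The remaining task is to prove, for each $v \in \Z_{\ge 0}$, that $N(v) := |\{\bi : e(\bi) = v\}|$ equals $c_v := |\{(j_1, j_2) \in \res{a_n}\times\res{a_{n+1}} : j_1 + j_2 = v\}|$. The key idea is to set $T(\bi) := i_1 a_{n+1} + i_2 a_{n-1}$; since $\gcd(a_{n-1}, a_{n+1}) = 1$ (an easy induction, using $a_n \not\equiv 0 \pmod{3}$ for all $n$), the map $T$ is injective on $\res{a_n} \times \res{a_{n+1}}$, so $N(v) = |S \cap ((v-1) a_n, v a_n]|$ where $S := T(\res{a_n} \times \res{a_{n+1}})$. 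Decompose $S$ into the $a_n$ arithmetic progressions $S_{i_1} = \{i_1 a_{n+1} + k a_{n-1} : k \in \res{a_{n+1}}\}$, each of common difference $a_{n-1}$ and length $a_{n+1}$. For each $v$, I would count $|S_{i_1} \cap ((v-1) a_n, v a_n]|$ using the relation $a_{n+1} + a_{n-1} = 3 a_n$ (so that $i_1 a_{n+1} \equiv -i_1 a_{n-1} \pmod{a_n}$), which tightly controls how the APs are offset modulo $a_n$; summing over $i_1$ and matching with $c_v = \min(v+1,\, a_n,\, a_n + a_{n+1} - 1 - v)$ finishes the proof.

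The main obstacle is the combinatorial bookkeeping in the last step, especially near the boundary ranges $v \in [0, a_n - 1]$ and $v \in [a_{n+1} - 1, a_n + a_{n+1} - 2]$, where some $S_{i_1}$ fail to contribute their generic number of points to $((v-1) a_n, v a_n]$. The palindromic symmetry of $g_\br^\bx(z)$ implied by reflexivity of $\Pq$ should cut the case analysis roughly in half. An alternative route would be induction on $n$ along the Fibonacci recurrence, but extracting a clean recursion relating $g_n(z)$ and $g_{n-1}(z)$ appears nontrivial, since the naive ratio $(1 - z^{a_{n+2}})/(1 - z^{a_n})$ is not a polynomial in general.
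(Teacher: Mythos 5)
The statement you are trying to prove is labeled a \emph{conjecture} in the paper (Conjecture~\ref{conj:fib}), not a theorem: the authors report only that it has been verified computationally for $n\le 7$, and they explicitly write that ``obtaining a more precise understanding of Conjecture~\ref{conj:fibu} and the values of $u(\bi)$ is needed to resolve Conjecture~\ref{conj:fib}.'' So there is no proof in the paper to compare your proposal against, and you should treat this as an open problem rather than a gap in your own reading.

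That said, your opening moves are sound and match exactly what the paper does prove. Establishing $\gcd(a_n,a_{n+1})=1$, the Cassini identity $a_{n-1}a_{n+1}-a_n^2=1$, the identity $1+a_n^2+a_{n+1}^2=3a_na_{n+1}$ (hence $\ell=3$), the desirable remainder $\brho=(-a_{n-1},a_n)$ with quotient $\bc=(3,0)$, and the formula $u(\alpha(\bi))=3i_1+a_{n-1}\omega_1(\bi)-a_n\omega_2(\bi)$ are precisely the contents of the paper's Lemma~\ref{lem:fibprop}. Your subsequent reduction $a_{n-1}\omega_1(\bi)-a_n\omega_2(\bi)=-\lfloor a_n(i_1-i_2)/a_{n+1}\rfloor$ is a correct computation (it follows from $\omega_1 s_1+i_1=\omega_2 s_2+i_2$ and Cassini), and the resulting ceiling form of the exponent is closely related to the paper's Conjecture~\ref{conj:fibu} (with $a_{n+1}/a_n$ playing the role of the golden-ratio power; note that their conjectured $n$-independence is itself unproven).

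The genuine gap is exactly where you flag it: the final counting argument. Showing that the multiset $\{e(\bi):\bi\in\res{a_n}\times\res{a_{n+1}}\}$ matches the coefficient sequence of $\bigl(\sum_{i<a_n}z^i\bigr)\bigl(\sum_{i<a_{n+1}}z^i\bigr)$ requires a careful three-distance-theorem--style analysis of how the arithmetic progressions $S_{i_1}$ distribute across the intervals $((v-1)a_n, v a_n]$, especially near the boundary ranges of $v$, and you have only sketched this. Until that bookkeeping is carried out in full (or an inductive recursion along the Fibonacci-type recurrence is found), this remains a plausible strategy rather than a proof -- which is consistent with the paper leaving the statement open.
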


There are several unique aspects of Conjecture~\ref{conj:fib} that distinguish it from the theorems where $\br=(a,ka-1)$.
First, in the factorizations found in the $\br=(a,ka-1)$ setting, the $\br$-vector was fixed and the $\bx$-vector was varying.
For this conjecture, both $\br$ and $\bx$ are varying simultaneously.
Second, the arithmetical structure of the $\br$- and $\bx$-vectors in the $(a,ka-1)$ setting are considerably simpler than in this context.
For example, consider the following lemma.

\begin{lemma}\label{lem:fibprop}
  The following properties hold for the sequence $(a_n)$.
  \begin{enumerate}
  \item For $n\geq 2$, $1+a_{n-1}^2=a_na_{n-2}$.
  \item For $n\geq 0$, $1+a_{n}^2+a_{n+1}^2=3a_na_{n+1}$, and thus $\bx=(a_n,a_{n+1})$ is an R-multiplicity for $\br=(a_n,a_{n+1})$ with $\ell=3$ and the corrsponding $\Pq$ is reflexive.
  \item $\gcd(a_n,a_{n+1})=1$.
  \item{ For $\br=\bx=(a_{n+1},a_n)$ and $\bi=(i_1,i_2)\in \res{a_n}\times \res{a_{n+1}}$, we have
      \begin{align*}
        u(\alpha(\bi))& =3i_1+a_{n-1}w_1(\bi)-a_nw_2(\bi) \\
                      & = 3i_1+a_{n-1}(a_n(i_1-i_2)\bmod a_{n+1})-a_n(a_{n-1}(i_1-i_2)\bmod a_n)\, .
      \end{align*}}
  \end{enumerate}
\end{lemma}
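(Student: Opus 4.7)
The plan is to prove the four parts in the order (1), (3), (2), (4). Part (1) is the core identity; (3) is an immediate descent using the recurrence; (2) is an algebraic consequence of (1) and (3); and (4) follows by combining the identity in (1) with Theorem~\ref{thm:gxrthm} and Proposition~\ref{prop:formula_for_alpha_omega}.

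For part (1), I would induct on $n \geq 2$, with the base case $n=2$ checked by direct computation since $a_2 = 5$. For the inductive step, expand $a_{n+1}a_{n-1} - a_n^2$ using the recurrence $a_{n+1} = 3a_n - a_{n-1}$ and substitute the inductive hypothesis in the form $a_{n-1}^2 = a_n a_{n-2} - 1 = 3a_n a_{n-1} - a_n^2 - 1$; the expression then collapses to $1$. Part (3) is immediate: any common divisor of $a_n$ and $a_{n+1}$ divides $a_{n-1} = 3a_n - a_{n+1}$, and descending yields divisibility of $a_0 = 1$. For part (2), apply (1) at index $n+1$ to get $1 + a_n^2 = a_{n+1}a_{n-1}$ and add $a_{n+1}^2$, giving $1 + a_n^2 + a_{n+1}^2 = a_{n+1}(a_{n-1}+a_{n+1}) = 3a_na_{n+1}$. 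By (3), $\lcm{a_n, a_{n+1}} = a_na_{n+1}$, so this is exactly the R-multiplicity condition with $\ell = 3$, and reflexivity of $\Pq$ follows from Setup~\ref{setup1}.

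For part (4), the key observation is that the identity from (1), rewritten as $(-a_{n-1})a_{n+1} + a_n \cdot a_n = -1$, furnishes the desirable $\bs$-remainder $\brho = (-a_{n-1}, a_n)$ dictated by Setup~\ref{setup2}: with $\br = (a_{n+1}, a_n)$ we have $\bs = (a_n, a_{n+1})$, and indeed $-a_n \le -a_{n-1} \le -1$ and $0 \le a_n < a_{n+1}$ (using $a_{n-1} < a_n < a_{n+1}$). Solving $x_1 = a_{n+1} = c_1 a_n - a_{n-1}$ and $x_2 = a_n = c_2 a_{n+1} + a_n$ then forces $c_1 = 3$ and $c_2 = 0$. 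Substituting $(\bc, \brho) = ((3,0), (-a_{n-1}, a_n))$ into Theorem~\ref{thm:gxrthm} yields the first line of the claimed expression for $u(\alpha(\bi))$. Since $\gcd(s_1, s_2) = 1$ by part (3), Proposition~\ref{prop:formula_for_alpha_omega} applies and specializes (given $r_2/s_1 = 1$ and $r_1/s_2 = 1$) to $\omega_1(\bi) = (a_n(i_1-i_2) \bmod a_{n+1})$ and $\omega_2(\bi) = (a_{n-1}(i_1-i_2) \bmod a_n)$, yielding the second line.

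The only genuine computation in this plan is the induction for part (1), and it is a routine manipulation of the recurrence; I do not anticipate a substantive obstacle. The main conceptual content of the lemma lies in recognizing that the Fibonacci-type identity $1 + a_{n-1}^2 = a_n a_{n-2}$ doubles as the defining equation for the desirable $\bs$-division in this setting, which is what makes part (4) reduce to a direct application of the machinery in Section~\ref{sec:factoring}.
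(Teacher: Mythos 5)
Your proposal is correct and follows essentially the same approach as the paper's (the paper's published proof is terser, relegating parts (1)--(3) to "straightforward induction" and for part (4) simply reading off $\brho=(-a_{n-1},a_n)$, $\bc=(3,0)$ and citing Theorem~\ref{thm:gxrthm}). You have correctly identified the key structural point — that the identity $a_{n-1}a_{n+1}-a_n^2=1$ is exactly the desirable $\bs$-remainder condition — and you fill in the appeal to Proposition~\ref{prop:formula_for_alpha_omega} (equivalently Corollary~\ref{cor:alpha_omega_a_ka-1}) to get the explicit $\omega_j$ formulas for the second displayed line, which the paper leaves implicit.
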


\begin{proof}
  \commentout{
    For the first item, observe that $1+2^2=1\cdot 5$ and $1+5^2=2\cdot 13$.
    By induction using these as base cases, we have
    \begin{align*}
      a_n\cdot a_{n-2} & = (3a_{n-1}-a_{n-2})a_{n-2} \\
                       & = 3a_{n-1}a_{n-2}-a_{n-2}^2 \\
                       & = 3a_{n-1}a_{n-2}-(a_{n-1}a_{n-3}-1)\\
                       & = a_{n-1}(3a_{n-2}-a_{n-3})+1 \\
                       & = a_{n-1}^2+1 \, .
    \end{align*}

    The equality $1+a_{n}^2+a_{n+1}^2=3a_na_{n+1}$ is obtained using the identities $1+a_{n}^2=a_{n-1}a_{n+1}$ and $a_{n+1}+a_{n-1}=3a_n$ as follows:
    \begin{align*}
      1+a_{n}^2+a_{n+1}^2 & = a_{n+1}^2+a_{n+1}a_{n-1} \\
                          & = a_{n+1}(a_{n+1}+a_{n-1}) \\
                          & = 3a_na_{n+1}
    \end{align*}
    Finally, since $1=a_na_{n-2}-a_{n-1}^2$, it follows that $\gcd(a_n,a_{n+1})=1$.
  }

  The first three claims follow from straightforward arguments using induction and application of the defining identity for $a_n$.
  For the fourth item, since $-1 = a_na_n-a_{n-1}a_{n+1},$ we have that $\rho_1=-a_{n-1}$ and $\rho_2=a_n$.
  Thus, since $a_{n+1}=3a_n-a_{n-1}$, we have that $c_1=3$, and since $a_n<a_{n+1}$ we have $c_2=0$.
  The result follows from Theorem~\ref{thm:gxrthm}.
\end{proof}

The fact that $\ell=3$ for all $n$ establishes that $(1+z+z^2)$ is a factor of the $h^*$-polynomial in this case, and thus one expects that $g_{(a_{n+1},a_n)}^{(a_{n+1},a_n)}(z)$ factors as a product of two geometric series.
However, the behavior of $u(\alpha(\bi))$ is quite subtle, in the following sense.
For $\bi=(i_1,i_2)\in \res{a_n}\times \res{a_{n+1}}$, define
\[
  v(\bi):=a_{n-1}(a_n(i_1-i_2)\bmod a_{n+1})-a_n(a_{n-1}(i_1-i_2)\bmod a_n) \, ,
\]
so that $u(\alpha(\bi))=3i_1+v(\bi)$.
Thus, for all $(i_1,i_2)$, we have
\[ 
  v(i_1,i_2)=v(i_1+1,i_2+1) \, ,
\]
and hence
\[
  u(\alpha(i_1+1,i_2+1))=3+u(\alpha(i_1,i_2)) \, .
\]
This implies that the values of $u(\alpha(\bi))$ are essentially determined by the boundary values $u(\alpha(i_1,0))$ and $u(\alpha(0,i_2))$.
Experimental data combined with an OEIS~\cite{OEIS} search leads us to the following conjecture. 

\begin{conjecture}\label{conj:fibu}
  \begin{enumerate}
  \item The value of $u(\alpha(\bi))$ is independent of $n$.
  \item For all $i_1\geq 0$, we have
    \[
      u(\alpha(i_1,0))=\left\lceil i_1\left(\frac{1+\sqrt{5}}{2}\right)^2  \right\rceil \, .
    \]
  \item For all $i_2\geq 0$, we have
    \[
      u(\alpha(0,i_2))=2i_2-\left\lfloor i_2\left( \frac{1+\sqrt{5}}{2}\right) \right\rfloor \, .
    \]
  \end{enumerate}
\end{conjecture}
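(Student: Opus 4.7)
The plan is to establish Parts (2) and (3) directly via a single lattice-theoretic lemma, then deduce Part (1) as a corollary from the shift identity $u(\alpha(i_1+1,i_2+1)) = u(\alpha(i_1,i_2)) + 3$ noted immediately before the conjecture.

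The key lemma states: \emph{for all $0 \leq i < a_n$,} $\lfloor a_{n-1}i/a_n\rfloor = \lfloor a_n i/a_{n+1}\rfloor$. To prove it, consider the unimodular map $T(i,m) = (a_{n-1}i - m a_n,\, m a_{n+1} - a_n i)$, whose determinant is $a_{n-1}a_{n+1} - a_n^2 = 1$ by Lemma~\ref{lem:fibprop}(1). Its inverse sends $(r_1,r_2)$ to $(a_{n+1}r_1 + a_n r_2,\, a_n r_1 + a_{n-1}r_2)$. A disagreement of the two floors at some integer $m$ would produce a lattice point $(r_1, r_2) \in \Z_{\geq 0} \times \Z_{\geq 1}$ with preimage $i = a_{n+1}r_1 + a_n r_2 \geq a_n$, contradicting $i < a_n$.

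With this in hand, Part (2) follows quickly: writing $p = \lfloor a_{n-1}i_1/a_n\rfloor = \lfloor a_n i_1/a_{n+1}\rfloor$ and substituting into Lemma~\ref{lem:fibprop}(4) with $i_2 = 0$ gives
\[
u(\alpha(i_1,0)) = 3i_1 + a_{n-1}(a_n i_1 - p a_{n+1}) - a_n(a_{n-1}i_1 - p a_n) = 3i_1 - p,
\]
where we again used $a_{n-1}a_{n+1} - a_n^2 = 1$. Chaining the key lemma across levels $n, n+1, n+2, \ldots$ shows $\lfloor a_{m-1}i_1/a_m\rfloor = p$ for all $m \geq n$. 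The ratio recurrence $r_m = 1/(3 - r_{m-1})$ forces $a_{m-1}/a_m$ to decrease monotonically to the fixed point $1/\varphi^2$ of $x^2 - 3x + 1$, so letting $m \to \infty$ yields $p = \lfloor i_1/\varphi^2 \rfloor$. Finally, $\varphi^2 + 1/\varphi^2 = 3$ gives $3 i_1 - p = \lceil i_1 \varphi^2 \rceil$, as claimed. Part (3) follows analogously with $k = -i_2$: for $0 < i_2 < a_{n+1}$ with $a_n \nmid i_2$, the identities $-a_n i_2 \bmod a_{n+1} = a_{n+1} - (a_n i_2 \bmod a_{n+1})$ and $-a_{n-1}i_2 \bmod a_n = a_n - (a_{n-1}i_2 \bmod a_n)$ combine with the lattice lemma to give $u(\alpha(0,i_2)) = 1 + \lfloor i_2/\varphi^2\rfloor$, and the Beatty identity $\lfloor i_2 \varphi^2\rfloor = \lfloor i_2\varphi\rfloor + i_2$ rearranges this into $2 i_2 - \lfloor i_2\varphi\rfloor$. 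The exceptional cases $i_2 = a_n$ and $i_2 = 2a_n$ (where $a_n \mid i_2$) are verified directly using $a_n^2 \equiv -1 \pmod{a_{n+1}}$.

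Part (1) is then a corollary: the shift identity reduces any $(i_1,i_2)$ to an axis point, and the formulas from Parts (2) and (3) are manifestly $n$-independent. The most delicate step, and the main obstacle I anticipate, is the bookkeeping for the exceptional $i_2 = a_n, 2a_n$ cases in Part (3), where the lattice lemma no longer cleanly identifies the two floors and one must verify the uniform formula by direct computation with $a_n^2 = a_{n-1}a_{n+1} - 1$; once these points are handled, the remainder of the argument is a chain of direct calculations.
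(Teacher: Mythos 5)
This statement is Conjecture~5.4, which the paper leaves open: the authors explicitly present it only as an experimentally supported observation (``Experimental data combined with an OEIS search leads us to the following conjecture''), and note that understanding it would be needed to resolve Conjecture~\ref{conj:fib}. So there is no proof in the paper to compare against; your proposal would be new content.

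The core of your argument is sound. The lattice lemma is correct: the $2\times 2$ matrix with rows $(a_{n-1},-a_n)$ and $(-a_n,a_{n+1})$ has determinant $a_{n-1}a_{n+1}-a_n^2=1$ by the identity in Lemma~\ref{lem:fibprop}(1), its inverse sends $(r_1,r_2)$ to $(a_{n+1}r_1+a_nr_2,\,a_nr_1+a_{n-1}r_2)$, and a disagreement of the two floors would manufacture a point $(r_1,r_2)\in\Z_{\ge 0}\times\Z_{\ge 1}$ whose preimage $i=a_{n+1}r_1+a_nr_2\ge a_n$ is too large. The algebra reducing Lemma~\ref{lem:fibprop}(4) with $i_2=0$ to $u=3i_1-p$, the chaining of the floor equality across levels $m\ge n$, the passage to $p=\lfloor i_1/\varphi^2\rfloor$ via the monotone convergence $a_{m-1}/a_m\downarrow 1/\varphi^2$, and the conversion $3i_1-\lfloor i_1/\varphi^2\rfloor=\lceil i_1\varphi^2\rceil$ using $\varphi^2+1/\varphi^2=3$ all check out. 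Part~(1) then does follow from the shift identity $u(\alpha(i_1+1,i_2+1))=3+u(\alpha(i_1,i_2))$.

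Part~(3) has two soft spots that should be firmed up. First, as stated your key lemma covers only $0\le i<a_n$, while $i_2$ ranges over $\res{a_{n+1}}$; you should state the extension explicitly: for $0\le i<a_{n+1}$ the inverse map forces $r_1=0$ and $r_2\in\{1,2\}$ (since $2a_n<a_{n+1}<3a_n$), so the floors can disagree only at $i=a_n$ or $i=2a_n$. Second, for the exceptional $i_2\in\{a_n,2a_n\}$, the congruence $a_n^2\equiv-1\pmod{a_{n+1}}$ gives $u(\alpha(0,a_n))=a_{n-1}$ and $u(\alpha(0,2a_n))=2a_{n-1}$, but ``verifying directly'' against $2i_2-\lfloor i_2\varphi\rfloor$ still requires the separate estimate $\lfloor a_n\varphi\rfloor=2a_n-a_{n-1}$ (equivalently $\lceil a_n/\varphi^2\rceil=a_{n-1}$), which you did not establish. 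A cleaner and fully uniform route, requiring no case split at all: for every $0<i_2<a_{n+1}$ one has $u(\alpha(0,i_2))=1+q'$ with $q'=\lfloor a_ni_2/a_{n+1}\rfloor$. In the generic case this is your $1+p'=1+q'$ computation; in the exceptional case $\omega_2=0$ and $q'=\lfloor a_n^2/a_{n+1}\rfloor=a_{n-1}-1$ (resp.\ $2a_{n-1}-1$) by the same congruence, giving $u=a_{n-1}=1+q'$ (resp.\ $2a_{n-1}=1+q'$). The chain of floor equalities starting at level $n+1$, which has no exceptions for $i_2<a_{n+1}$, then identifies $q'=\lfloor i_2/\varphi^2\rfloor$, and the Beatty rearrangement finishes. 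With these adjustments the proposal is a correct proof of the conjecture.
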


Some values of $u(\bi):=u(\alpha(\bi))$ are given in Table~\ref{tab:fibu}.
It seems that obtaining a more precise understanding of Conjecture~\ref{conj:fibu} and the values of $u(\bi)$ is needed to resolve Conjecture~\ref{conj:fib}.

\begin{table}
  \centering
  \begin{tabular}{|l|l|l|l|l|l|l|l|l|l|l|l|l|}
    \hline
    $0$ & $1$ & $1$ & $2$ & $2$ & $2$ & $3$ & $3$ & $4$ & $4$ & $4$ & $5$ & $5$ \\
    \hline
    $3$ & $3$ & $4$ & $4$ & $5$ & $5$ & $5$ & $6$ & $6$ & $7$ & $7$ & $7$ & $8$ \\
    \hline
    $6$ & $6$ & $6$ & $7$ & $7$ & $8$ & $8$ & $8$ & $9$ & $9$ & $10$ & $10$ & $10$ \\
    \hline
    $8$ & $9$ & $9$ & $9$ & $10$ & $10$ & $11$ & $11$ & $11$ & $12$ & $12$ & $13$ & $13$ \\
    \hline
    $11$ & $11$ & $12$ & $12$ & $12$ & $13$ & $13$ & $14$ & $14$ & $14$ & $15$ & $15$ & $16$ \\
    \hline
  \end{tabular}
  \caption{Some values of $u(\alpha(i_1,i_2))$ with $i_1\geq 0$ indexing rows and $i_2\geq 0$ indexing columns.}
  \label{tab:fibu}
\end{table}


\subsection{On Ehrhart Positivity}

We conjecture that independent of the reflexivity condition, all $\Pq$ with $\bq$ supported by two integers are Ehrhart positive.

\begin{conjecture}\label{conj:twopositive}
  All $\Pq$ with $\bq$ supported on two integers are Ehrhart positive.
\end{conjecture}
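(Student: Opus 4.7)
The plan is to work from the identity
\[
L_{\Pq}(t) \;=\; \sum_{b=0}^{m-1} \binom{t+n-w(b)}{n}, \qquad m := 1 + x_1 r_1 + x_2 r_2,\ n := x_1 + x_2,
\]
obtained by combining Theorem~\ref{thm:qhstar} with the standard $h^*$-to-Ehrhart conversion $L_P(t) = \sum_k h_k^*\binom{t+n-k}{n}$. The structural feature special to two-integer support is the reformulation
\[
w(b) \;=\; \frac{b}{m} \;+\; x_1 \left\{\frac{b r_1}{m}\right\} \;+\; x_2 \left\{\frac{b r_2}{m}\right\},
\]
obtained by expanding $\lfloor b r_i/m\rfloor = b r_i/m - \{b r_i/m\}$ in~\eqref{eqn:bweight} and using $x_1 r_1 + x_2 r_2 = m-1$. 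Thus the integer $w(b)$ decomposes as a small linear drift plus exactly two bounded fractional-part terms, a form well-suited to Dedekind-sum analysis.

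My next step would be to extract $n! \cdot [t^i]\, L_{\Pq}(t)$ by expanding $\binom{t+n-w}{n} = \tfrac{1}{n!}\prod_{j=1}^n (t+j-w)$ and collecting powers of $t$. Using the identity $\prod_{j=1}^n (u+j) = \sum_k c(n+1,k+1) u^k$, where $c(\cdot,\cdot)$ denotes unsigned Stirling numbers of the first kind, this yields
\[
n! \cdot [t^i]\, L_{\Pq}(t) \;=\; \sum_{\ell=0}^{n-i} (-1)^\ell \binom{\ell+i}{i}\, c(n+1,\ell+i+1)\, M_\ell,
\]
where $M_\ell := \sum_{b=0}^{m-1} w(b)^\ell$. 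Expanding $w(b)^\ell$ by the multinomial theorem and the fractional-part formula reduces each $M_\ell$ to classical Dedekind-type sums of the form $\sum_b \{b r_j/m\}^{a}\{b r_{j'}/m\}^{a'}(b/m)^{a''}$, all of which admit closed forms in terms of Bernoulli numbers and generalized Dedekind sums. Two-integer support is essential here for keeping the number of distinct fractional-part factors manageable.

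The main obstacle is showing that this signed alternating sum is positive for every $i$. Since Corollary~\ref{cor:reflexive} does not apply outside the reflexive case, cancellations must be controlled directly. The strategy I would pursue is a ``dominant low-order term versus oscillatory correction'' decomposition: isolate the contribution of small values of $w(b)$ (which enters with positive sign into $[t^i]\,L_{\Pq}(t)$ and should produce the dominant positive mass via a hockey-stick-type collapse) and bound the oscillatory Dedekind corrections using reciprocity identities. A natural first test case is Payne's family $\br = (1,a)$, where one of the fractional-part terms becomes linear in $b$ and the relevant Dedekind sums collapse to elementary identities; a clean positive answer there would offer a template for the general two-integer case, though making the error bounds uniformly tight enough to absorb all negative contributions for every $i$ will be the crux of the argument.
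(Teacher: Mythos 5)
This statement is Conjecture~\ref{conj:twopositive}, which the paper does \emph{not} prove; the authors present it as an open problem supported only by experimental verification for $1\leq r_i\leq 15$, $1\leq x_i\leq 24$. So there is no paper proof to compare against, and any purported proof would need to be genuinely new and complete.

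Your proposal is explicitly a plan rather than a proof, and its preparatory steps are all correct. The starting identity $L_{\Pq}(t)=\sum_{b=0}^{m-1}\binom{t+n-w(b)}{n}$ with $m=1+x_1r_1+x_2r_2$ and $n=x_1+x_2$ follows from Theorem~\ref{thm:qhstar} together with the standard expansion $L_P(t)=\sum_k h_k^*\binom{t+n-k}{n}$. The reformulation $w(b)=b/m+x_1\{br_1/m\}+x_2\{br_2/m\}$ is an algebraic consequence of~\eqref{eqn:bweight} together with $x_1r_1+x_2r_2=m-1$, and the Stirling-number expansion giving $n!\,[t^i]L_{\Pq}(t)=\sum_\ell(-1)^\ell\binom{\ell+i}{i}c(n+1,\ell+i+1)M_\ell$ with $M_\ell=\sum_b w(b)^\ell$ is a straightforward and correct application of $\prod_{j=1}^n(u+j)=\sum_k c(n+1,k+1)u^k$. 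Recasting the $M_\ell$ as Bernoulli/Dedekind-sum expressions is likewise reasonable, and the observation that two-integer support keeps the number of distinct fractional-part factors small is the right structural feature to exploit.

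However, the crux of the conjecture is precisely what you defer to future work: showing that the alternating sum over $\ell$ is positive for every coefficient index $i$. Everything you have written up to that point is bookkeeping that converts the problem into an equivalent but unproved form; the positivity itself is the entire mathematical content, and the ``dominant low-order term versus oscillatory Dedekind correction'' strategy is described only at the level of intent, with no bounds established, no reciprocity identities invoked concretely, and no verification even in the stated test case $\br=(1,a)$. As it stands the argument has a genuine gap at the decisive step, and the conjecture remains open. If you do pursue this, note also that you will need to handle the non-reflexive case, where you cannot fall back on Corollary~\ref{cor:reflexive} or on any structure of $h^*(\Pq;z)$ as a Kronecker polynomial; the positivity must come from the Dedekind-sum estimates alone, and it is not obvious that the oscillatory corrections are uniformly controlled as $x_1,x_2\to\infty$ with $r_1,r_2$ fixed, let alone as $r_1,r_2$ vary.
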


Conjecture~\ref{conj:twopositive} has been verified for all $\bq=(\br,\bx)$ with $1\leq r_i\leq 15$ and $1\leq x_i\leq 24$.
Note that this general Ehrhart positivity is not a result of only Theorem~\ref{thm:geomriemhyp} and Kronecker polynomial techniques, as most $\Pq$ are not reflexive.

\subsection{$\bq$-Vectors Supported on Three Integers}

A natural next step is to consider $\bq$ that are supported by more than two integers.
Experimental computation and Proposition~\ref{prop:formula_for_alpha_omega} suggest that a starting point for such an exploration are $3$-supported $\bq$'s with $\bs$ entries coprime.
When $\gcd(a,b)=1$ and $\bs=(b,a,1)$, so that $\br=(a,b,ab)$, Theorem~\ref{thm:lcmextend} implies that this reduces to the case where $\br=(a,b)$.
Thus, we can consider only those $\bs$ such that the $s_i$ are pairwise coprime and each $s_i\geq 2$.
The first such example is $\bs=(5,3,2)$, for which we have the following result.

\begin{theorem}\label{thm:532}
  Let $\bs=(5,3,2)$, $\br=(6,10,15)$, and $\bx=(5c_1-1,3c_2-1,2c_3+1)$ for $c_1,c_2\geq 1$ and $c_3\geq 0$.
  For $\bq=(\br,\bx)$, the following three cases imply that $h^*(\Pq;z)$ is Kronecker.
  \begin{enumerate}
  \item $(c_1,c_2,c_3)=(1,3,1)$, where 
    \[
      g_{(6,10,15)}^{(4,8,3)}(z)=(1+z^3)(1+z^2+z^4)(1+z+z^2+z^3+z^4)^2
    \]
  \item $(c_1,c_2,c_3)=(c,c,4c-1)$ for $c\geq 1$, where 
    \[
      g_{(6,10,15)}^{(5c-1,3c-1,2(4c-1)+1)}(z)=(1+z^{4c-1})(1+z^c+z^{2c})(1+z+z^c+z^{2c}+z^{3c}+z^{4c})
    \]
  \item $(c_1,c_2,c_3)=(c,3c,7c-1)$ for $c\geq 1$, where
    \[
      g_{(6,10,15)}^{(5c-1,3(3c)-1,2(7c-1)+1)}(z)=(1+z^{7c-1})(1+z^{3c}+z^{6c})(1+z+z^c+z^{2c}+z^{3c}+z^{4c})
    \]
  \end{enumerate}
\end{theorem}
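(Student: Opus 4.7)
First, I would verify Setup~\ref{setup1}: since $\lcm(6,10,15) = 30$, we have $\bs = (5,3,2)$, and for $\bx = (5c_1-1,\, 3c_2-1,\, 2c_3+1)$ the identity
\[
1 + 6(5c_1-1) + 10(3c_2-1) + 15(2c_3+1) = 30(c_1+c_2+c_3)
\]
shows that $\bx$ is an R-multiplicity with $\ell = c_1 + c_2 + c_3$. Because $(-1)\cdot 6 + (-1)\cdot 10 + 1\cdot 15 = -1$, the $\bs$-division with $\bc = (c_1,c_2,c_3)$ and $\brho = (-1,-1,1)$ is desirable. The entries of $\bs$ are pairwise coprime, so Proposition~\ref{prop:formula_for_alpha_omega} supplies the closed forms
\[
\omega_1(\bi) = (i_1 + 2i_2 - 3i_3) \bmod 6,\;\; \omega_2(\bi) = (2i_1 + 3i_2 - 5i_3) \bmod 10,\;\; \omega_3(\bi) = (3i_1 + 5i_2 - 8i_3) \bmod 15,
\]
and Theorem~\ref{thm:gxrthm} then yields
\[
g_{\br}^{\bx}(z) = \sum_{\bi \in \res{5}\times\res{3}\times\res{2}} z^{u(\bi)}, \quad u(\bi) = c_1 i_1 + c_2 i_2 + c_3 i_3 + \omega_1(\bi) + \omega_2(\bi) - \omega_3(\bi),
\]
which is a sum of exactly $30$ monomials to be matched against the proposed right-hand sides.

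For Case~(1), where $(c_1,c_2,c_3) = (1,3,1)$ is fixed, the approach is finite verification: tabulate $u(\bi)$ across the $30$ triples $\bi$ and compare the resulting coefficient sequence against a direct expansion of the claimed right-hand side. For Cases~(2) and~(3), where $c$ is free, the right-hand side in each case factors into three polynomials with $2$, $3$, and $5$ monomials respectively, for a total of $30$ terms indexed by $(j_1,j_2,j_3) \in \{0,1\}\times\{0,1,2\}\times\{0,1,2,3,4\}$. I would construct an explicit bijection
\[
\Phi \colon \res{5}\times\res{3}\times\res{2} \longrightarrow \{0,1\}\times\{0,1,2\}\times\{0,1,2,3,4\},
\]
independent of $c$, such that $u(\Phi^{-1}(j_1,j_2,j_3)) = e_1 j_1 + e_2 j_2 + e_3 j_3$ for every $c \ge 1$, with exponent vector $(e_1,e_2,e_3) = (4c-1,\, c,\, c)$ for Case~(2) and $(7c-1,\, 3c,\, c)$ for Case~(3). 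The map $\Phi$ should be described by arithmetic formulas that decode the residue data packaged by the $\omega_j$'s into the ``tensor coordinates'' of the right-hand factorization.

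The main obstacle will be the case analysis needed to verify $\Phi$. Each $\omega_j$ is piecewise linear in $\bi$ with branches determined by reductions modulo $6$, $10$, or $15$, so checking that $u(\Phi^{-1}(\bj))$ collapses to the claimed linear form in $\bj$ requires tracking how floor corrections cancel across all branches. A promising organization is to split first on $i_3 \in \{0,1\}$, reducing the identity to two parallel two-support subproblems in the spirit of Theorem~\ref{thm:case1}, and then pair the two halves using the right-hand factor $(1+z^{e_1})$. Once each factorization identity is established, the Kronecker conclusion follows from Theorem~\ref{thm:ellgeomfactor} together with the observation that every right-hand factor is itself Kronecker, since its roots are roots of unity of explicit orders.
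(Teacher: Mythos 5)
Your setup is correct and matches the framework the paper relies on: you verify that $\bx$ is an R-multiplicity with $\ell = c_1+c_2+c_3$, that $(\bc,\brho) = ((c_1,c_2,c_3),(-1,-1,1))$ is a desirable $\bs$-division, and you derive the correct closed forms for $\omega_1,\omega_2,\omega_3$ from Proposition~\ref{prop:formula_for_alpha_omega} and the correct exponent function $u(\bi)$ from Theorem~\ref{thm:gxrthm}. Your exponent vectors $(4c-1,c,c)$ and $(7c-1,3c,c)$ also implicitly and correctly account for the third factor being a five-term geometric series $\sum_{i=0}^4 z^{ic}$ (resp.\ $\sum_{i=0}^4 z^{ic}$); for what it is worth, the displayed factorizations in the theorem statement as printed, with a stray ``$+\,z$'' in cases~(2),(3) and a square in case~(1), cannot be literally correct since their coefficient sums are $36$ and $150$ rather than $|I(\br)| = 30$, and a direct check at $c=1$ confirms $g = (1+z^3)(1+z+z^2)(1+z+z^2+z^3+z^4)$.

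The genuine gap is that you never actually produce the bijection $\Phi$ that your argument hinges on. You state that you \emph{would} construct $\Phi$, describe the properties it must satisfy, anticipate that a case analysis will be required, and sketch a heuristic organization (split on $i_3$, then pair with the factor $1+z^{e_1}$) --- but none of this is carried out. Since $\Phi$ is precisely what makes the two exponent multisets agree term-by-term, its construction is the entire substance of cases (2) and (3); without it, the proof reduces to a restatement of the claim. The paper's proof, though itself brief, does supply this missing data: it specifies a permutation of $\res{5}\times\res{3}\times\res{2}$ that is the identity except on four explicit transposed pairs (e.g., for case (2), $(2,2,0)\leftrightarrow(0,0,1)$, $(4,1,0)\leftrightarrow(1,0,1)$, $(3,2,0)\leftrightarrow(0,1,1)$, $(4,2,0)\leftrightarrow(2,0,1)$), and leaves the routine verification that this permutation aligns $u$ with the right-hand exponents. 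Your proposed organization by $i_3$ is plausible, but you would still need to identify these exceptional pairs explicitly and check that the values of $u$ on the swapped indices match across the two slices; the ``spirit of Theorem~\ref{thm:case1}'' analogy you invoke involves a genuinely two-coordinate reindexing and does not obviously transfer to the three-coordinate mixed-support case without new work. Until that verification is actually performed (for case~(1), by a finite table as you suggest; for cases (2),(3), by exhibiting and checking the bijection uniformly in $c$), the proof is incomplete.
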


\begin{proof}
  We sketch the proof.
  The case (1) is straightforward to verify directly.
  For case (2), we use a similar technique to those used for the proofs in Section~\ref{sec:twointegers} where we identify a bijection of $\res{5}\times\res{3}\times\res{2}$ that yields the factorization.
  In this case, if we fix all elements except for the following pairs which are exchanged by the bijection, then the factorization follows:
  \begin{align*}
    & (2,2,0)\longleftrightarrow (0,0,1), \, \, \, (4,1,0)\longleftrightarrow (1,0,1)\\
    & (3,2,0)\longleftrightarrow (0,1,1), \, \, \, (4,2,0)\longleftrightarrow (2,0,1)
  \end{align*}
  Similarly for case (3), if we fix all elements except for the following pairs which are exchanged by the bijection, then the factorization follows:
  \begin{align*}
    & (2,2,0)\longleftrightarrow (1,0,1), \, \, \,  (4,1,0)\longleftrightarrow (0,0,1)\\
    & (3,2,0)\longleftrightarrow (2,0,1), \, \, \, (4,2,0)\longleftrightarrow (0,1,1) \qedhere
  \end{align*}
\end{proof}

Experimental evidence suggests that these are the only $\bq$ supported on $(6,10,15)$ with Kronecker $h^*$-polynomials.
A search over (pairwise coprime) $\bs$ and $\bx$ with $2\leq s_i\leq 11$ and $1\leq x_i\leq 50$ has produced only two further examples of $3$-supported $\bq$'s with Kronecker $h^*$-polynomials, specifically:
\[
  \bs=(11,4,3), \, \br=(12, 33, 44), \, \bx=(21, 11, 22)
\]
and
\[
  \bs=(10,7,3), \, \br=(21, 30, 70), \, \bx=(9, 10, 5)
\]
For both $\bs=(11,4,3)$ and $\bs=(10,7,3)$, there are no other associated $\Pq$ with Kronecker $h^*$-polynomials for any $\bx$ with each $1\leq x_i\leq 75$.
Hence, we present the following question.

\begin{question}
  Are there other general families of $\bq$-vectors supported on more than two integers such that $h^*(\Pq;z)$ is Kronecker?
  In particular, are there other $3$-supported $\bq$'s with $\bs$ entries coprime that have Kronecker $h^*(\Pq;z)$?
\end{question}

\subsection{Properties of Factorizations}

Our main approach in this paper has been to study factorizations of $g_{\br}^{\bx}(z)$ into geometric series in powers of $z$.
However, as Remark~\ref{remark:h*} shows, it is possible for $h^*(\Pq;z)$ to have a geometric factorization for $\bq=(\br,\bx)$, yet for $g_{\br}^{\bx}(z)$ to not have such a factorization, leading to the following question.
\begin{question}
  Are there only finitely many $\bq=(\br,\bx)$ with Kronecker $h^*(\Pq;z)$ where $h^*(\Pq;z)$ admits a geometric factorization, but $g_{\br}^{\bx}(z)$ does not?
\end{question}

If a polynomial is Kronecker, then it factors into cyclotomic factors.
It would be interesting to determine how these factors are related to $\bq$ in the case of $h^*$-polynomials, hence the following question.
\begin{question}
  How, if at all, is the factorization of a Kronecker $h^*(\Pq;z)$ into cyclotomic factors related to arithmetic properties of $\bq$?
\end{question}

\bibliographystyle{plain}
\bibliography{Braun}

\begin{thebibliography}{10}

\bibitem{OEIS}
The On-Line Encyclopedia of Integer Sequences, published electronically at
  https://oeis.org.

\bibitem{laplaciandigraphs}
Gabriele Balletti, Takayuki Hibi, Marie Meyer, and Akiyoshi Tsuchiya.
\newblock Laplacian simplices associated to digraphs.
\newblock Preprint, https://arxiv.org/abs/1710.00252.

\bibitem{BraunDavisReflexive}
Benjamin Braun and Robert Davis.
\newblock Ehrhart series, unimodality, and integrally closed reflexive
  polytopes.
\newblock {\em Ann. Comb.}, 20(4):705--717, 2016.

\bibitem{BraunDavisSolusIDP}
Benjamin Braun, Robert Davis, and Liam Solus.
\newblock Detecting the integer decomposition property and ehrhart unimodality
  in reflexive simplices.
\newblock to appear in \textit{Advances in Applied Mathematics}, preprint at
  http://arxiv.org/abs/1608.01614.

\bibitem{conrads}
Heinke Conrads.
\newblock Weighted projective spaces and reflexive simplices.
\newblock {\em Manuscripta Math.}, 107(2):215--227, 2002.

\bibitem{ConradsUnitRootsNotes}
Keith Conrads.
\newblock Roots on a {Circle}.
\newblock Notes at \newline
  http://www.math.uconn.edu/$\sim$kconrad/blurbs/galoistheory/numbersoncircle.pdf.

\bibitem{DamianouUnitCircle}
Pantelis~A. Damianou.
\newblock Monic polynomials in {$\bold Z[x]$} with roots in the unit disc.
\newblock {\em Amer. Math. Monthly}, 108(3):253--257, 2001.

\bibitem{Ehrhart}
Eug{\`e}ne Ehrhart.
\newblock Sur les poly\`edres rationnels homoth\'etiques \`a {$n$}\ dimensions.
\newblock {\em C. R. Acad. Sci. Paris}, 254:616--618, 1962.

\bibitem{HensleyInterior}
Douglas Hensley.
\newblock Lattice vertex polytopes with interior lattice points.
\newblock {\em Pacific J. Math.}, 105(1):183--191, 1983.

\bibitem{HibiDualPolytopes}
Takayuki Hibi.
\newblock Dual polytopes of rational convex polytopes.
\newblock {\em Combinatorica}, 12(2):237--240, 1992.

\bibitem{KroneckerRoots}
L.~Kronecker.
\newblock Zwei {S}\"atze \"uber {G}leichungen mit ganzzahligen {C}oefficienten.
\newblock {\em J. Reine Angew. Math.}, 53:173--175, 1857.

\bibitem{lagariasziegler}
Jeffrey~C. Lagarias and G{\"u}nter~M. Ziegler.
\newblock Bounds for lattice polytopes containing a fixed number of interior
  points in a sublattice.
\newblock {\em Canad. J. Math.}, 43(5):1022--1035, 1991.

\bibitem{LiuPositivitySurvey}
Fu~Liu.
\newblock On positivity of {E}hrhart polynomials.
\newblock To appear in \emph{Recent Trends in Algebraic Combinatorics}, a
  volume of the Association for Women in Mathematics Series, Springer
  International Publishing. Preprint available at
  https://arxiv.org/abs/1711.09962.

\bibitem{Payne}
Sam Payne.
\newblock Ehrhart series and lattice triangulations.
\newblock {\em Discrete Comput. Geom.}, 40(3):365--376, 2008.

\bibitem{Rodriguez-Villegas}
Fernando Rodriguez-Villegas.
\newblock On the zeros of certain polynomials.
\newblock {\em Proc. Amer. Math. Soc.}, 130(8):2251--2254 (electronic), 2002.

\bibitem{SolusNumeralSystems}
Liam Solus.
\newblock Simplices for numeral systems.
\newblock to appear in \emph{Transactions of the American Mathematical
  Society}, preprint at https://arxiv.org/abs/1706.00480.

\bibitem{StanleyDecompositions}
Richard~P. Stanley.
\newblock Decompositions of rational convex polytopes.
\newblock {\em Ann. Discrete Math.}, 6:333--342, 1980.
\newblock Combinatorial mathematics, optimal designs and their applications
  (Proc. Sympos. Combin. Math. and Optimal Design, Colorado State Univ., Fort
  Collins, Colo., 1978).

\bibitem{sagemath}
{The Sage Developers}.
\newblock {\em {S}ageMath, the {S}age {M}athematics {S}oftware {S}ystem
  ({V}ersion 8.0)}, 2017.
\newblock {\tt http://www.sagemath.org}.

\end{thebibliography}

\end{document}